\newtheorem{thm}{Theorem}[section]
\newtheorem{col}[thm]{Corollary}
\newtheorem{lem}[thm]{Lemma}
\newtheorem{prop}[thm]{Proposition}
\newtheorem{que}[thm]{Question}
\theoremstyle{definition}
\newtheorem{defn}[thm]{Definition}
\theoremstyle{remark}
\newtheorem{rem}[thm]{Remark}
\theoremstyle{example}
\title{Virtual Homological Torsion of Closed Hyperbolic $3$-manifolds}
\author[Hongbin~Sun]{Hongbin Sun}
\address{%
    Department of Mathematics\\
    Princeton University\\
    Princeton, NJ 08544, USA}
\email{%
    hongbins@math.princeton.edu}
\date{}
\begin{document}

\begin{abstract} In this paper, we will use Kahn and Markovic's immersed almost totally geodesic surfaces (\cite{KM1}) to construct certain immersed $\pi_1$-injective $2$-complexes in closed hyperbolic $3$-manifolds. Such $2$-complexes are locally almost totally geodesic except along a $1$-dimensional subcomplex. By using Agol 's result that the fundamental groups of closed hyperbolic $3$-manifolds are vitually compact special (\cite{Ag}, \cite{Wi}) and other works on geometric group theory, we will show that any closed hyperbolic $3$-manifold virtually contains any prescribed subgroup in the homological torsion. More precisely, our main result is, for any finite abelian group $A$, and any closed hyperbolic $3$-manifold $M$, $M$ admits a finite cover $N$, such that $A$ is a direct summand of $Tor(H_1(N;\mathbb{Z}))$.
\end{abstract}

\maketitle


\section{Introduction}
\subsection{Background}
In \cite{Lu1}, L\"{u}ck showed that the $L^2$-betti numbers of a CW-complex with residually finite fundamental group can be approximated by the betti numbers of a cofinal tower of finite regular cover. For the definitions of various $L^2$-invariants, see \cite{Lu2}.

\begin{thm}[\cite{Lu1}]\label{betti}
Let $X$ be a finite, connected CW-complex with residually finite fundamental group $\Gamma$. Let $\Gamma \supset \Gamma_1 \supset \cdots \supset \Gamma_n \supset \cdots$ be a nested sequence of finite index normal subgroups of $\Gamma$ with $\cap \Gamma_n=\{1\}$, and let $X_n$ be the finite cover of $X$ associated with $\Gamma_n\subset \Gamma$, then $$\lim_{n\rightarrow \infty}\frac{b_p(X_n)}{[\Gamma:\Gamma_n]}=b_p^{(2)}(X).$$
\end{thm}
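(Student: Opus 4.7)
The plan is to treat both sides of the equality as the value at zero of a spectral density function associated with a combinatorial Laplacian, and to transfer convergence of moments into convergence at zero with the aid of an algebraic integrality bound on the small eigenvalues. First I would lift the cellular chain complex of $X$ to the universal cover $\tilde X$. Choosing one lift of each cell gives free $\mathbb{Z}\Gamma$-modules $C_p(\tilde X) \cong (\mathbb{Z}\Gamma)^{c_p}$, with boundary maps represented by matrices $A_p$ whose entries lie in $\mathbb{Z}\Gamma$. The combinatorial Laplacian $\Delta_p = A_p^\ast A_p + A_{p+1} A_{p+1}^\ast$ acts by right convolution on $\ell^2(\Gamma)^{c_p}$ and on each $\mathbb{C}[\Gamma/\Gamma_n]^{c_p}$, yielding $b_p^{(2)}(X) = \dim_{\mathcal{N}(\Gamma)} \ker \Delta_p$ and $b_p(X_n) = \dim_{\mathbb{C}} \ker \Delta_p^{(n)}$. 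Writing $F_p(\lambda) = \mathrm{tr}_{\mathcal{N}(\Gamma)} E_{[0,\lambda]}(\Delta_p)$ and $F_p^{(n)}(\lambda) = [\Gamma:\Gamma_n]^{-1} \dim_{\mathbb{C}} E_{[0,\lambda]}(\Delta_p^{(n)})$, the theorem becomes the statement $F_p^{(n)}(0) \to F_p(0)$.

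Next I would establish convergence of moments. Because the matrices $A_p$, and hence $\Delta_p$, have entries in $\mathbb{Z}\Gamma$ with uniformly bounded support and coefficients, the operator norms of $\Delta_p$ and each $\Delta_p^{(n)}$ are bounded by a constant $K$ independent of $n$; all spectral measures $dF_p$, $dF_p^{(n)}$ live in $[0,K]$. For any polynomial $q$ the moment $\int q \, dF_p^{(n)}$ equals the normalized matrix trace of $q(\Delta_p^{(n)})$. Each diagonal entry of $q(\Delta_p^{(n)})$ is the sum of the $\mathbb{Z}\Gamma$-coefficients of the corresponding entry of $q(\Delta_p)$ over the cosets of $\Gamma_n$; since $\bigcap_n \Gamma_n=\{1\}$, for $n$ large $\Gamma_n$ meets the (finite) support of each such entry only at the identity, so
$$\int q \, dF_p^{(n)} \longrightarrow \int q \, dF_p.$$
Together with the uniform compact support, this yields weak convergence $dF_p^{(n)} \Rightarrow dF_p$, and therefore $F_p^{(n)}(\lambda) \to F_p(\lambda)$ at every continuity point $\lambda$ of $F_p$.

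The upper bound $\limsup_n F_p^{(n)}(0) \le F_p(0)$ then follows by choosing continuity points $\lambda_k \downarrow 0$ and using right-continuity of $F_p$. The opposite inequality is the main obstacle: a priori the jump of $dF_p$ at $0$ could be swallowed by spectrum of $dF_p^{(n)}$ accumulating near $0$. The decisive input that rules this out is integrality: in the natural basis of cells of $X_n$, the matrix of $\Delta_p^{(n)}$ has integer entries, so its characteristic polynomial lies in $\mathbb{Z}[t]$. Factoring out $t^{b_p(X_n)}$ leaves a polynomial with nonzero integer constant term, whence the product of the nonzero eigenvalues (with multiplicity) has absolute value at least $1$. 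Combined with the uniform upper bound $K$ on eigenvalues, an elementary counting argument then yields the uniform log-H\"older-type estimate
$$F_p^{(n)}(\lambda) - F_p^{(n)}(0) \le \frac{c_p \log K}{\log(K/\lambda)} \qquad \text{for all } 0 < \lambda < K,$$
valid for every $n$.

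Finally, this uniform bound closes the gap: for any continuity point $\lambda$ of $F_p$,
$$\liminf_n F_p^{(n)}(0) \ge \liminf_n F_p^{(n)}(\lambda) - \frac{c_p \log K}{\log(K/\lambda)} = F_p(\lambda) - \frac{c_p \log K}{\log(K/\lambda)},$$
and letting $\lambda \to 0^+$ through such points gives $\liminf_n F_p^{(n)}(0) \ge F_p(0)$, completing the proof. The hardest step is clearly the uniform spectral estimate near zero in the third paragraph; the moment convergence is a soft consequence of residual finiteness, but without the integer-matrix determinant argument there is no control on how densely the bottom of the spectrum of $\Delta_p^{(n)}$ can approach zero.
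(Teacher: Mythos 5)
Your proposal is correct and is essentially Lück's original argument from the cited reference \cite{Lu1}: lift to the $\mathbb{Z}\Gamma$-chain complex, compare the spectral density functions $F_p$ and $F_p^{(n)}$ of the combinatorial Laplacians, get weak convergence of spectral measures from residual finiteness and finite support of the group-ring entries, and close the gap at $0$ with the uniform $\log$-H\"older estimate coming from integrality of the characteristic polynomial of $\Delta_p^{(n)}$. The present paper cites this theorem without proof, so there is no internal argument to compare against; your sketch matches the standard one faithfully, including correctly identifying the determinant/integrality step as the load-bearing ingredient.
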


Since finite volume hyperbolic $3$-manifolds have vanishing $L^2$-betti numbers (\cite{LL}), by applying the above result to hyperbolic $3$-manifolds, we have the following immediate corollary.

\begin{col}
For any hyperbolic $3$-manifold $M$ with finite volume, and any tower of finite regular covers $\cdots \rightarrow M_n \rightarrow \cdots \rightarrow M_1\rightarrow M$ with $\cap\pi_1(M_i)=\{1\}$, $$\lim_{n\rightarrow \infty}\frac{b_1(M_n)}{[\pi_1(M):\pi_1(M_n)]}=0.$$
\end{col}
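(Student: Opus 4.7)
The plan is to apply Theorem \ref{betti} directly with $p=1$. Essentially no new ideas are needed beyond checking hypotheses, so the proof should be very short.

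First I would replace $M$ by a finite CW model: in the closed case $M$ already has a finite CW structure (any smooth triangulation), and in the cusped finite volume case $M$ is homotopy equivalent to its compact core, which is a compact $3$-manifold with torus boundary components and hence admits a finite CW structure. The tower $M_n \to M$ of finite regular covers pulls back to a tower of finite regular covers of this finite CW model with the same deck groups, so replacing $M$ and $M_n$ by their cores does not affect the ratio $b_1(M_n)/[\pi_1(M):\pi_1(M_n)]$.

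Next I would verify that $\Gamma = \pi_1(M)$ is residually finite. This follows from the fact that $\Gamma$ embeds in $\mathrm{PSL}(2,\mathbb{C})$, hence is a finitely generated linear group, so Mal'cev's theorem applies. With residual finiteness in hand and the nested sequence $\pi_1(M_n)$ of finite index normal subgroups with trivial intersection, the hypotheses of Theorem \ref{betti} are satisfied.

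Now I would apply Theorem \ref{betti} with $p=1$ to conclude
\[
\lim_{n \to \infty} \frac{b_1(M_n)}{[\pi_1(M):\pi_1(M_n)]} = b_1^{(2)}(M).
\]
Finally I would invoke the result of Lott--Lück \cite{LL} that every finite volume hyperbolic $3$-manifold has vanishing $L^2$-Betti numbers in all degrees, so in particular $b_1^{(2)}(M)=0$, yielding the stated limit. There is no real obstacle here; the only subtlety is handling the cusped case via a compact core so that Theorem \ref{betti}, which is stated for finite CW complexes, applies verbatim.
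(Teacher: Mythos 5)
Your proof is correct and matches the paper's approach exactly: the paper simply states the corollary as an immediate consequence of Theorem \ref{betti} together with the vanishing of $L^2$-Betti numbers from \cite{LL}. You have just spelled out the routine hypothesis checks (residual finiteness via linearity and Mal'cev, finite CW model via compact core in the cusped case) that the paper leaves implicit.
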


Along with Agol's virtually infinite first betti number theorem (\cite{Ag}), these results imply that the first betti numbers of finite covers of a fixed hyperbolic $3$-manifold can go to infinity, but this trend does not grow very fast, which is a very interesting phenomenon.

On the other hand, a natural question is, whether the above approximation of the $L^2$-betti number can be generalized to some approximation of the $L^2$-torsion.

In particular, in \cite{LS}, L\"{u}ck and Schick showed that, for a finite volume hyperbolic $3$-manifold $M$, its $L^2$-torsion is related with its hyperbolic volume by the following equality: $$\rho^{(2)}(\widetilde{M})=-\frac{Vol(M)}{6\pi}.$$ So there arises the following natural question (see \cite{Lu2} Question 13.73, \cite{Lu3} Question 1.12 and \cite{BV}).

\begin{que}\label{torsiongrowth}
Let $M$ be a hyperbolic $3$-manifold with finite volume, does there exist a cofinal tower of finite regular covers $\cdots\rightarrow M_n\rightarrow \cdots \rightarrow M_1\rightarrow M$, such that $$\lim_{n\rightarrow \infty}\frac{\ln{|Tor(H_1(M_n;\mathbb{Z}))|}}{[\pi_1(M):\pi_1(M_n)]}=\frac{Vol(M)}{6\pi}?$$
\end{que}

In \cite{Le}, Le claim that $$\lim_{n\rightarrow \infty}\frac{\ln{|Tor(H_1(M_n;\mathbb{Z}))|}}{[\pi_1(M):\pi_1(M_n)]}\leq\frac{Vol(M)}{6\pi}$$ holds for any cofinal tower of finite regular covers.

If the answer of Question \ref{torsiongrowth} is yes, any hyperbolic $3$-manifold admits a certain cofinal tower of finite regular covers, with exponential growth on their homological torsion. However, a much weaker question, whether any hyperbolic $3$-manifold virtually has nontrivial homological torsion, was still unknown. In the survey paper \cite{AFW}, Aschenbrenner, Friedl and Wilton asked the following question.

\begin{que}\label{torsion}
Let $M$ be a hyperbolic $3$-manifold with finite volume, does $M$ admit a finite cover $N$ such that $Tor(H_1(N;\mathbb{Z}))\ne 0$?
\end{que}

This paper is devoted to answer Question \ref{torsion} for closed hyperbolic $3$-manifolds. Actually, we will prove that any closed hyperbolic $3$-manifold virtually contains any prescribed subgroup in its homological torsion.
\begin{thm}\label{main}
For any finite abelian group $A$, and any closed hyperbolic $3$-manifold $M$, $M$ admits a finite cover $N$, such that $A$ is a direct summand of $Tor(H_1(N;\mathbb{Z}))$.
\end{thm}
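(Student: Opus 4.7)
The plan is to build, for each finite abelian $A$, an immersed $\pi_1$-injective 2-complex $X \looparrowright M$ whose first homology $H_1(X;\mathbb{Z})$ contains $A$ as a direct summand, and then use virtual specialness of $\pi_1(M)$ to upgrade this to $A$ being a direct summand of $H_1(N;\mathbb{Z})$ for a suitable finite cover $N\to M$. By the structure theorem for finite abelian groups, it suffices to treat the cyclic case $A=\mathbb{Z}/n$: for a general $A=\bigoplus_{i=1}^k\mathbb{Z}/n_i$ one can either combine the constructions into a single 2-complex with $k$ disjoint torsion gadgets, or iterate the cyclic construction and invoke the retraction step at each stage to preserve earlier summands.

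For $A=\mathbb{Z}/n$, I propose the 2-complex
\[ X \;=\; S \cup_{\gamma^n} T, \]
where $S\looparrowright M$ is a closed, immersed, almost totally geodesic (quasi-Fuchsian) surface produced by Kahn--Markovic \cite{KM1}; $\gamma$ is a non-separating simple closed curve on $S$ whose class is primitive in $H_1(S;\mathbb{Z})$ but is trivial in $H_1(M;\mathbb{Z})$ (achievable because the kernel of $H_1(S;\mathbb{Z})\to H_1(M;\mathbb{Z})$ is large for Kahn--Markovic surfaces of sufficient genus); and $T$ is a compact oriented surface-with-boundary, immersed in $M$ so that $\partial T$ covers $\gamma$ with degree $n$. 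Topologically such $T$ exists since $[\gamma^n]=0$ in $H_1(M;\mathbb{Z})$, and the essential technical requirement is to build $T$ as itself an almost totally geodesic quasi-convex piece whose combinatorics near $\partial T$ matches those of $S$ near $\gamma$, so that the resulting 2-complex is locally a book of $n+2$ half-plane pages along the singular curve $\gamma$ (two from $S$, one from each sheet of the $n$-fold cover $\partial T\to\gamma$). This matches the ``locally almost totally geodesic except along a 1-dimensional subcomplex'' model promised by the abstract.

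A direct Mayer--Vietoris computation for the pushout $S\leftarrow S^1\to T$, with the left arrow of degree $n$ onto $\gamma$ and the right arrow the boundary inclusion (which is zero in $H_1(T)$ since $\partial T$ is a commutator word in the free group $\pi_1(T)$), yields
\[ H_1(X;\mathbb{Z}) \;\cong\; \bigl(H_1(S)/\langle n[\gamma]\rangle\bigr)\oplus H_1(T) \;\cong\; \mathbb{Z}/n \oplus \mathbb{Z}^{2g-1}\oplus\mathbb{Z}^{2h}, \]
so $\mathbb{Z}/n$ sits as a direct summand of $H_1(X;\mathbb{Z})$. Meanwhile $\pi_1(X)\cong\pi_1(S)\ast_{\langle\gamma^n\rangle}\pi_1(T)$ is an amalgam of a surface group and a free group over an infinite cyclic edge group, hence torsion-free by Bass--Serre theory. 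Granting that $\pi_1(X)\hookrightarrow\pi_1(M)$ is an injection with quasi-convex image (the main geometric input), Agol \cite{Ag} with Wise \cite{Wi} makes $\pi_1(M)$ virtually compact special, and the Haglund--Wise virtual retract theorem yields a finite-index subgroup $G_0\le\pi_1(M)$ with $\pi_1(X)\le G_0$ together with a retraction $r\colon G_0\twoheadrightarrow\pi_1(X)$. Letting $N$ be the cover of $M$ corresponding to $G_0$, abelianization gives a split surjection $H_1(N;\mathbb{Z})\twoheadrightarrow H_1(X;\mathbb{Z})$, so $H_1(X;\mathbb{Z})$ is a direct summand of $H_1(N;\mathbb{Z})$ and hence $\mathbb{Z}/n$ is a direct summand of $\mathrm{Tor}(H_1(N;\mathbb{Z}))$.

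The main obstacle is the construction of $T$ and the verification that the amalgam $X$ is $\pi_1$-injective and quasi-convex in $\pi_1(M)$. This requires extending Kahn--Markovic's good-pants machinery to produce almost totally geodesic surfaces with controlled geodesic boundary realising a prescribed power of a given closed geodesic, with enough rigidity in the pants decomposition near the boundary that gluing to $S$ along the $n$-fold cover of $\gamma$ does not destroy injectivity. The $\pi_1$-injectivity of the amalgam then follows from a combination-theorem style argument for the graph of groups, exploiting hyperbolicity of the local book model along $\gamma$; this is exactly the step that uses the ``locally almost totally geodesic except along a 1-dimensional subcomplex'' hypothesis, and is the heart of the paper.
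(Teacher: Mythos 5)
Your high-level plan coincides with the paper's Strategy II: build an immersed $\pi_1$-injective $2$-complex $X\looparrowright M$ with $\mathbb{Z}/n$ as a direct summand of $H_1(X;\mathbb{Z})$, use virtual compact specialness of $\pi_1(M)$ and the Haglund--Wise virtual retract theorem to split $H_1(X;\mathbb{Z})$ off of $H_1(N;\mathbb{Z})$ for a suitable finite cover $N$, and reduce the general finite abelian group to the cyclic case. That endgame, and your Mayer--Vietoris computation for your proposed $X$, are correct.

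The construction of $X$ itself, however, is genuinely different from the paper's, and as written it has a gap. The paper does not glue a bounded surface $T$ to a closed Kahn--Markovic surface $S$ along an $n$-fold cover of a curve $\gamma\subset S$. Instead it first uses exponential mixing of the frame flow (Lemma~\ref{geodesic}) to produce a \emph{short} closed geodesic $\gamma'$ with complex length $\bold{l}(\gamma')$ close to $\frac{R+2\pi i}{p}$; the imaginary part $\approx 2\pi/p$ is essential. Then $(\gamma')^p$ is an $(R,\epsilon)$-good curve, so one can build a Kahn--Markovic surface $S$ with a pants curve $C$ mapped to $(\gamma')^p$, cut $S$ along $C$, and quotient each resulting boundary circle by the $\frac{2\pi}{p}$-rotation. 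The singular locus of the resulting $X_p$ is two copies of $\gamma'$ with a local $p$-prong cross-section, and the $2\pi/p$ imaginary part of $\bold{l}(\gamma')$ is precisely what separates the $p$ pages by a definite angle; that angle gap is the engine of the $\pi_1$-injectivity proof (Theorem~\ref{injectivity1}).

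Your proposal cannot arrange this. Any nonseparating simple closed curve $\gamma$ on an almost totally geodesic Kahn--Markovic surface $S$ has geodesic length at least on the order of $R$ (the pants curves, the shortest essential curves, already have length $\approx R$), so a surface $T$ with $\partial T$ wrapping $n$ times around $\gamma$ has boundary geodesic of length $\approx nR$, far outside the good-curve regime that the good-pants machinery can realize. Worse, nothing in your setup controls $\Im\,\bold{l}(\gamma)$: pants curves of $S$ have complex length approximately real, so the holonomy around $\gamma$ rotates normal directions by almost nothing, and the $n$ sheets of $\partial T$ near a point of $\gamma$ would lie in essentially the same plane. Your ``book of $n+2$ half-planes'' would have degenerate inter-page angles, and a combination-theorem argument for $\pi_1(S)\ast_{\langle\gamma^n\rangle}\pi_1(T)\to\pi_1(M)$ could not get off the ground. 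The missing idea is to make the singular curve \emph{short} with prescribed complex length $\approx \frac{R+2\pi i}{p}$ \emph{before} building the surface, rather than choosing it after $S$ is already in hand; once that reversal is made, a single cut-and-quotient of one Kahn--Markovic surface suffices and the auxiliary surface $T$ is unnecessary.
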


Since Agol showed that hyperbolic $3$-manifolds have virtually infinite first betti number (\cite{Ag}) and the first betti number does not decrease under taking finite cover, we have the following immediate corollary.
\begin{col}
For any finitely generated abelian group $A$, and any closed hyperbolic $3$-manifold $M$, $M$ admits a finite cover $N$, such that $A$ is a direct summand of $H_1(N;\mathbb{Z})$.
\end{col}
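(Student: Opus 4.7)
The plan is to combine an abstract gluing-of-surfaces construction with the virtual specialness of $\pi_1(M)$, extracting a direct summand via a virtual retraction. Writing the finite abelian group as $A \cong \mathbb{Z}/n_1\mathbb{Z} \oplus \cdots \oplus \mathbb{Z}/n_k\mathbb{Z}$, I would build in one step a closed $\pi_1$-injective immersed $2$-complex $f \colon X \to M$ whose first homology contains $A$ as a direct summand, and then promote this summand to a finite cover $N$ of $M$ by invoking a group retraction of $\pi_1(N)$ onto $f_\ast\pi_1(X)$.

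The geometric core of the argument is a construction of $X$ in the form $X = \Sigma_0 \cup \bigsqcup_{i=1}^{k} \Sigma_i$, where each $\Sigma_i$ is a closed Kahn--Markovic almost totally geodesic surface \cite{KM1} and $\Sigma_i$ is attached to $\Sigma_0$ along a closed geodesic $\gamma_i \subset M$, the $\gamma_i$'s being pairwise disjoint in $\Sigma_0$. The homology of the attaching curves is prescribed: $\gamma_i$ is nullhomologous in $\Sigma_0$ and represents $n_i \beta_i$ for some primitive class $\beta_i \in H_1(\Sigma_i;\mathbb{Z})$. The Mayer--Vietoris sequence for this decomposition then gives
$$H_1(X;\mathbb{Z}) \;\cong\; H_1(\Sigma_0;\mathbb{Z}) \;\oplus\; \bigoplus_{i=1}^{k} \bigl(H_1(\Sigma_i;\mathbb{Z})/\langle n_i\beta_i\rangle\bigr),$$
in which each quotient factor contributes a canonical $\mathbb{Z}/n_i\mathbb{Z}$ direct summand, so that $A$ sits as a direct summand of $H_1(X;\mathbb{Z})$. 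Away from the $\gamma_i$'s, $X$ is locally CAT$(-1)$ coming from the ambient hyperbolic metric and the near-totally-geodesic character of the surfaces; arranging the two sheets meeting along each $\gamma_i$ to form a dihedral angle bounded away from $0$ and $\pi$ extends this to a global local CAT$(-1)$ structure with cone singularities along the $1$-dimensional gluing locus, so that $f$ is $\pi_1$-injective and $f_\ast\pi_1(X) \leq \pi_1(M)$ is quasi-convex.

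With $X$ constructed, the rest of the argument is group-theoretic. By Agol's theorem (\cite{Ag}, \cite{Wi}), $\pi_1(M)$ is virtually compact special hyperbolic, and the Haglund--Wise canonical-completion-and-retraction machinery then promotes every quasi-convex subgroup of a virtually compact special hyperbolic group to a virtual retract. Applied to $f_\ast\pi_1(X) \leq \pi_1(M)$, this produces a finite cover $N \to M$ with $f_\ast\pi_1(X) \leq \pi_1(N)$ and a group retraction $r \colon \pi_1(N) \twoheadrightarrow f_\ast\pi_1(X)$. Abelianizing, $r_\ast$ splits the map $H_1(X;\mathbb{Z}) \to H_1(N;\mathbb{Z})$ induced by the lift of $f$ to $N$, so $H_1(X;\mathbb{Z})$, and hence the $A$-summand inside it, is a direct summand of $H_1(N;\mathbb{Z})$. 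Since $A$ is finite, this realizes $A$ as a direct summand of $Tor(H_1(N;\mathbb{Z}))$.

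The main obstacle I expect is the construction of $X$ with the required simultaneous control. Kahn--Markovic produces many almost totally geodesic surfaces through a prescribed closed geodesic, but the application here requires tracking (a) the homology class of each gluing curve inside each of the two surfaces meeting it, and (b) the dihedral angle between those two surfaces along the gluing curve, both at once. The first requirement is not part of the standard Kahn--Markovic output and amounts to enriching their good-pants combinatorics with homological data (possibly via a cover trick to make a primitive class become nullhomologous in a suitable finite cover of $\Sigma_0$); the second is needed for the local CAT$(-1)$ condition and hence for $\pi_1$-injectivity and quasi-convexity of the resulting $2$-complex. Once such a homological strengthening of Kahn--Markovic is available, the extraction of $A$ as a direct summand via virtual specialness is a fairly standard retract argument.
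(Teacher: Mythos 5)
Your proposal aims to prove this corollary directly from a new immersed $2$-complex, whereas the paper obtains it as a short deduction: the main theorem produces the finite torsion part of $A$ as a direct summand of $Tor(H_1(N;\mathbb{Z}))$ for some finite cover $N$, while Agol's virtually-infinite-first-Betti-number theorem (and the fact that $b_1$ does not decrease under passage to finite covers) supplies the free part. So the two arguments take genuinely different routes even at the level of overall strategy. More importantly, the construction of $X$ you sketch has a real gap. You require the gluing curve $\gamma_i$ to be a simple closed curve on a closed Kahn--Markovic surface $\Sigma_i$ with $[\gamma_i] = n_i\beta_i$ for a \emph{primitive} class $\beta_i$ and $n_i \ge 2$. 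But a simple closed curve on a closed oriented surface is always nullhomologous or primitive (a non-separating one has algebraic intersection number $\pm 1$ with a dual curve, and the intersection form is integral), so the requirement is vacuous for $n_i \ge 2$; and the curves that arise in the Kahn--Markovic good-pants framework are precisely embedded pants curves, so no ``enrichment with homological data'' can produce what you want. The paper's actual mechanism for torsion is different in kind: it takes a single Kahn--Markovic surface $S$ whose pants curve $C$ maps onto a $p$-th power geodesic $\gamma = (\gamma')^p$ with $\bold{l}(\gamma') \approx \frac{R+2\pi i}{p}$, cuts $S$ along $C$, and quotients each resulting boundary circle $C_k$ by the $\frac{2\pi}{p}$-rotation. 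The quotient complex $X_p$ has singular curves $c_k$ with $[C_k] = p[c_k]$ in $H_1(X_p;\mathbb{Z})$, and the order-$p$ torsion class is $[c_1]-[c_2]$; this cannot be achieved by wedging two closed surfaces along a common embedded curve. If you instead allow $\gamma_i$ to be a non-embedded curve on $\Sigma_i$ (e.g.\ an $n_i$-fold iterate of $\beta_i$), the local structure along $\gamma_i$ becomes $2n_i+2$ half-planes meeting along a line and the lengths of the geodesics on the two sides are forced to differ by a factor of $n_i$, which is incompatible with both surfaces being built from $(R,\epsilon)$-good pants at a common scale $R$.

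The second issue is the $\pi_1$-injectivity and quasi-convexity of $f: X \to M$, which you assert follows from a local CAT$(-1)$ structure with dihedral angles bounded away from $0$ and $\pi$. This is not justified and is in fact the hardest part of the paper (all of Section 4, Theorems \ref{injectivity} and \ref{injectivity1}). The surfaces are only \emph{almost} totally geodesic, the map $f$ is an immersion rather than an embedding, and the ambient-induced metric on $X$ is not literally a piecewise-hyperbolic CAT$(-1)$ metric; what the paper actually does is compare the given representation to a model Fuchsian-type representation $\rho_0$ via a quasiconformal conjugacy (Theorem \ref{KMinj}, Theorem \ref{McM}), control a piecewise-geodesic map $h: Z \to Z'$ (Theorem \ref{quasi}), estimate angle changes along the singular curves (Theorem \ref{angle}), and then assemble a quasi-isometric embedding of the universal cover into $\mathbb{H}^3$. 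You flag both points as ``obstacles,'' and rightly so --- but they are the substance of the theorem rather than routine gaps, and the first one cannot be resolved within the framework you propose. The shared, correct part of your argument is the final step: once one has a $\pi_1$-injective, quasi-convex $X$ with $A$ a direct summand of $H_1(X;\mathbb{Z})$, virtual specialness (Agol, Wise) plus the Haglund--Wise virtual retract theorem does yield the corollary; this matches the paper's Strategy II.
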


\begin{rem}
In a previous draft of this paper, the author used the result that $\pi_1(M)$ is LERF (\cite{Ag},\cite{Wi}), and only showed that $A$ embeds into $H_1(N;\mathbb{Z})$. Then Agol and Friedl informed the author about the virtual retract property of quasi-convex subgroups in $\pi_1(M)$ (\cite{HW}), then we could promote the result to make $A$ to be a direct summand.
\end{rem}

In the proof of Theorem \ref{main}, we will use Kahn and Markovic's construction of immersed almost totally geodesic surfaces in closed hyperbolic $3$-manifolds (\cite{KM1}). Since Kahn and Markovic's construction requires the manifold has a positive injectivity radius, it does not work for hyperbolic $3$-manifolds with cusps. So we can not show the same result for cusped $3$-manifolds, and we have the following natural question.
\begin{que}
Whether Theorem \ref{main} holds for finite volume hyperbolic $3$-manifolds with cusps?
\end{que}

\subsection{Sketch of the Proof}

In this paper, we will always use the symbols $\bold{l}$, $\bold{d}$ to denote the complex length and complex distance. The definitions of $\bold{l}$, $\bold{d}$, $\bold{hl}_{\Pi}$ and $s$ are given in Section \ref{pastwork}, which are standard notations in \cite{KM1}.

We will use Kahn and Markovic's construction of immersed almost totally geodesic surfaces in closed hyperbolic $3$-manifolds (\cite{KM1}) to do the following construction. For any closed hyperbolic $3$-manifold $M$ and any positive integer $p\geq 2$, we will construct an immersed $\pi_1$-injective $2$-complex $f:X_p\looparrowright M$, which provides us the virtual homological torsion.

More precisely, suppose $f:S\looparrowright M$ is a Kahn-Markovic surface, and $S$ is equipped with a pants decomposition $\mathcal{C}$. Then Kahn and Markovic's theorem implies that, there exist some small number $\epsilon>0$ and some large number $R>0$, such that for any simple closed curve $C\in \mathcal{C}$, $|\bold{hl}(C)-\frac{R}{2}|<\epsilon$ and $|s(C)-1|<\frac{\epsilon}{R}$ holds. The exponential mixing property of the frame flow (\cite{Mo},\cite{Po}) implies that there exists a closed geodesic $\gamma$ in $M$, such that $|\bold{l}(\gamma)-\frac{R+2\pi i}{p}|<\frac{\epsilon}{p}$. Moreover, we can choose $S$ such that $f(C)$ goes along$\gamma$ for $p$ times for some $C\in \mathcal{C}$.

By passing to a two-fold cover of $S$ if necessary, we can cut $S$ along $C$ to get a connected surface $S'$ with two oriented boundary components $C_1$ and $C_2$, with $[C_1]-[C_2]=0$ in $H_1(S';\mathbb{Z})$. Then $X_p$ is defined to be the quotient space of $S'$ under the $\frac{2\pi}{p}$-rotations on $C_1$ and $C_2$ respectively. Let $c_1$ and $c_2$ denote the image of $C_1$ and $C_2$ in $X_p$ respectively (with induced orientations), and we still use $f$ to denote the map $f:X_p\looparrowright M$ induced by the immersion $S\looparrowright M$.

Geometrically, in the closed hyperbolic $3$-manifold $M$, away from points in $c_1\cup c_2$, $f(X_p)$ locally looks like an almost totally geodesic surface. On a neighborhood of $f(c_i)$, $f(X_p)$ is almost a $(p\text{-prong})\times I$ with the top and bottom identified by the $\frac{2\pi}{p}$-rotation. Here the $p$-prong satisfies that any two adjacent edges have angle $\frac{2\pi}{p}$.

By doing cut-and-paste surgeries on $X_p$ with other Kahn-Markovic surfaces, we can assume that any essential arc in $X_p$ with end points in $c_1\cup c_2$ is very long. In this case we will show that $f:X_p\looparrowright M$ is $\pi_1$-injective.

Now we give two strategies to construct virtual homological torsions for closed hyperbolic $3$-manifolds. One strategy uses LERF and the other one uses the virtual retract property. The strategy using LERF can only give an embedding of the finite abelian group $A$ into $Tor(H_1(N,\mathbb{Z}))$ for some finite cover $N$; while the second strategy can show that $A$ is actually a virtual direct summand, which is stronger. However, the first strategy gives us an interesting codimension-$0$ submanifold in some finite cover $N$, which might be useful in some further research, so we give both strategies here.

{\bf Strategy I:} Let $\widetilde{M}$ be the infinite cover of $M$ associate to $f_*(\pi_1(X_p))\subset \pi_1(M)$, then $\widetilde{M}$ is a geometric finite hyperbolic $3$-manifold. Let $\hat{M}$ be a compact core of $\widetilde{M}$, then the boundary of $\hat{M}$ is incompressible, and there exists an order-$p$ element $\alpha=[c_1]-[c_2]\in H_1(\hat{M};\mathbb{Z})$. It is also easy to show that, in the order-$p$ subgroup of $H_1(\hat{M};\mathbb{Z})$ generated by $\alpha$, only $0$ and $\frac{p}{2}\alpha$ can be carried by $H_1(\partial\hat{M};\mathbb{Z})$ when $p$ is even, and only $0$ is carried by $H_1(\partial\hat{M};\mathbb{Z})$ when $p$ is odd.

Since fundamental groups of hyperbolic $3$-manifolds are LERF (\cite{Ag}, \cite{Wi}), by Scott's criterion of LERF (\cite{Sc}), there exists an intermediate finite cover $N\rightarrow M$ of $\widetilde{M}\rightarrow M$ such that $\hat{M}$ embeds into $N$. By an M-V sequence argument, $\hat{M}$ gives a $\mathbb{Z}_{\sigma(p)}$ subgroup in $Tor(H_1(N;\mathbb{Z}))$. Here $\sigma(p)=p$ when $p$ is an odd number, and $\sigma(p)=p/2$ if $p$ is even.

For two such geometrically finite subgroups $G_1=(f_1)_*(\pi_1(X_{p_1}))$ and $G_2=(f_2)_*(\pi_1(X_{p_2}))$, we can find $g\in \pi_1(M)$ such that both of the limit points of $g$ in $S^2_{\infty}$ do not lie in the limit sets $\Lambda(G_1)$ and $\Lambda(G_2)$. Then for a large enough positive integer $n$, the same argument as above shows that the geometric finite subgroup $G_1*g^nG_2g^{-n}\subset \pi_1(M)$ gives a $\mathbb{Z}_{\sigma(p_1)}\oplus \mathbb{Z}_{\sigma(p_2)}$ subgroup in the homology of some finite cover $N$. The result for a general finite abelian group $A$ can be shown by induction as the $\mathbb{Z}_{\sigma(p_1)}\oplus \mathbb{Z}_{\sigma(p_2)}$ case.

{\bf Strategy II:} Agol and Wise showed that $\pi_1(M)$ is virtually special (\cite{Ag},\cite{Wi}), so we can suppose that $\pi_1(M)$ is already the group of a special cube complex. Since quasi-convex subgroups of special groups are virtual retract (\cite{HW}), there exists a finite cover $N$ of $M$, such that the following conditions hold.

1) $\pi_1(X_p)\subset \pi_1(N)$.

2) For the inclusion map $i:\pi_1(X_p)\rightarrow \pi_1(N)$, there exists a retract homomorphism $r:\pi_1(N)\rightarrow \pi_1(X_p)$ such that $r\circ i=id_{\pi_1(X_p)}$.

The maps on fundamental groups induce maps on the first homology: $$H_1(X_p;\mathbb{Z})\xrightarrow{i_*} H_1(N;\mathbb{Z}) \xrightarrow{r_*} H_1(X_p;\mathbb{Z}).$$
Since $r_*\circ i_*=id$, we know that $H_1(X_p;\mathbb{Z})$ is a direct summand of $H_1(N;\mathbb{Z})$.

It is easy to compute that $H_1(X_p;\mathbb{Z})\cong \mathbb{Z}^{2g+1}\oplus \mathbb{Z}_p$, so $\mathbb{Z}_p$ is a direct summand of $H_1(N;\mathbb{Z})$.

For a general finite abelian group $A$, we can do the induction as in Strategy I and use the virtual retract property to construct our desired finite cover $N$.

This paper is organized as the following. In Section 2, we will give a quick review of Kahn and Markovic's result on constructing immersed almost totally geodesic surfaces in closed hyperbolic $3$-manifolds (\cite{KM1}), and prove some related lemmas. In Section 3, we will carry out the above discussion more concretely and rigorously, modulo the $\pi_1$-injectivity result (Theorem \ref{injectivity}). The $\pi_1$-injectivity property of $f:X_p\looparrowright M$ is a technical result and the proof will be deferred to Section 4.

{\bf Acknowledgement:} The author is grateful to his advisor David Gabai for many helpful conversations and suggestions. The author thanks Yi Liu for introducing this question to the author, and a few valuable conversations. The author would like to thank Ian Agol, Stefan Friedl and Vlad Markovic for comments on a previous draft of this paper. The author also thanks the referee for helpful comments and instructions.

\section{A Review of Kahn and Markovic's Works and Further Results}\label{pastwork}
In this section, we give a quick review of Kahn and Markovic's works on constructing immersed almost totally geodesic surfaces in closed hyperbolic $3$-manifolds (see \cite{KM1}). After introducing their works, we will develop a few related lemmas.

In \cite{KM1}, Kahn and Markovic proved the following Surface Subgroup Theorem, which is the first step to prove Thurston's Virtual Haken and Virtual Fibered Conjectures. (The conjectures were raised in \cite{Th2}, and settled in \cite{Ag}).

\begin{thm}[\cite{KM1}]\label{KMsurface}
For any closed hyperbolic $3$-manifold $M$, there exists an immersed closed hyperbolic surface $f:S\looparrowright M$, such that $f_*:\pi_1(S)\rightarrow \pi_1(M)$ is an injective map.
\end{thm}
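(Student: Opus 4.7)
The plan is to build $S$ as an assembly of many immersed totally geodesic pairs of pants in $M$, glued along their cuffs with shear very close to $+1$, and then to show that the result is $\pi_1$-injective because the immersion is almost totally geodesic. Fix a large parameter $R$ and a small $\epsilon>0$. First I would introduce \emph{good pants}: immersed totally geodesic pairs of pants in $M$ each of whose three cuff geodesics has complex half-length within $\epsilon/R$ of $R/2$. A good pants with a distinguished cuff $\gamma$ determines a canonical unit normal vector at $\gamma$, the ``foot'' based at the endpoint of the common orthogeodesic from the opposite cuff; this identifies the set of good pants with cuff $\gamma$ with a finite subset of the unit normal bundle $N^1(\gamma)$.

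The key analytic input is the following equidistribution statement, to be established via exponential mixing of the frame flow on $M$ (\cite{Mo}, \cite{Po}): for every closed geodesic $\gamma$ of complex length $\approx R$, the feet of good pants with $\gamma$ as a cuff are uniformly distributed in $N^1(\gamma)$ with error $O(e^{-cR})$. Heuristically, a good pants is built from an orthogeodesic that leaves $\gamma$, wanders for time $\approx R/2$, and returns near $\gamma$; mixing controls both the count and the spatial distribution of such trajectories. With this in hand, I would glue pants pairwise along each cuff using the unique shear that makes two adjacent pants bend by approximately $+1$, equivalently a translation by $\ell(\gamma)/2+i\pi$ in $N^1(\gamma)$. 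The equidistribution allows a Hall's marriage argument to produce a perfect matching of the good pants on each cuff, done simultaneously over all cuffs; the resulting $2$-complex is a closed immersed surface $f:S\looparrowright M$.

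To verify $\pi_1$-injectivity, I would lift $f$ to $\tilde f:\tilde S\to\mathbb{H}^3$, where $\tilde S$ carries its natural hyperbolic structure. Because each pants is totally geodesic and all shears are within $O(1/R)$ of $+1$, the image $\tilde f(\tilde S)$ is an almost totally geodesic pleated surface with bending angle $O(1/R)$ along every lift of a cuff. A standard pleated-surface estimate then shows that for $R$ sufficiently large such a map is a $(1+O(1/R))$-quasi-isometric embedding of $\mathbb{H}^2$ into $\mathbb{H}^3$, which yields that $f_*$ is injective.

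The main obstacle is the equidistribution of feet: one must convert the abstract exponential mixing of the frame flow into a uniform quantitative statement about orthogeodesic endpoints on $N^1(\gamma)$, with an error sharp enough that the numbers of good pants on the two sides of every cuff leave room for a simultaneous perfect matching. Everything else, including the $\pi_1$-injectivity step, is comparatively routine once the gluing geometry is controlled to this degree.
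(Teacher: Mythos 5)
The construction half of your sketch (good pants, feet, equidistribution via exponential mixing, and a matching argument to assemble a closed surface with shear close to $1$) is exactly the Kahn--Markovic strategy, and your narrative there is sound. One correction to the setup: the building blocks are not totally geodesic pairs of pants. A skew pants in a closed hyperbolic $3$-manifold has geodesic boundary but a generically non-planar interior, because the complex half-lengths of its cuffs have nonzero imaginary parts; there is no canonical totally geodesic spanning surface, and the resulting $S$ is therefore not a pleated surface in the usual sense. Near-flatness is a quantitative estimate one must prove, not a hypothesis one may assume.

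The more serious gap is in the $\pi_1$-injectivity step. You invoke a ``standard pleated-surface estimate'' based on bending of size $O(\epsilon/R)$ along each cuff, but such a bound does not naively accumulate to a $(1+O(1/R))$-quasi-isometry, because the cuffs are exponentially dense in $\widetilde S$. In a pants with cuff half-length $R/2$, the seam joining two cuffs has length $\sim 2e^{-R/4}$, so a unit-length geodesic segment in $\widetilde S$ can meet on the order of $e^{R/4}$ cuffs; the raw accumulated bending is then of order $\epsilon\, e^{R/4}/R$, which diverges as $R\to\infty$ rather than shrinking. The fact that the assembled surface is nonetheless a $(1+K\epsilon)$-quasi-Fuchsian embedding is a delicate cancellation argument that forms the other half of \cite{KM1}, roughly comparable in difficulty to the mixing and equidistribution half. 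In the present paper that result is quoted as a black box (Theorem~\ref{KMinj}), and Section~4, which only \emph{extends} it from surfaces to the $2$-complexes $X_p$ under a strong separation hypothesis on the singular curves, is by itself the most technical part of the paper. Calling the injectivity ``comparatively routine'' conceals the second of the two main obstacles.
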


Actually, the surfaces constructed in Theorem \ref{KMsurface} are almost totally geodesic surfaces, which are constructed by pasting oriented {\it good pants} together along oriented {\it good curves} in an almost totally geodesic way. In the following, we will describe Kahn and Markovic's construction with more details.

At first, we need to give some geometric definitions.

Let $\alpha$ be an oriented geodesic arc in a closed hyperbolic $3$-manifold with initial point $p$ and terminal point $q$. For two unit normal vectors $\vec{v}$ and $\vec{w}$ of $\alpha$ at $p$ and $q$ respectively, we define $\bold{d}_{\alpha}(\vec{v},\vec{w})$ by the following way. Let $\vec{v}'$ be the parallel transportation of $\vec{v}$ to $q$ along $\alpha$, $\theta \in \mathbb{R}/2\pi \mathbb{Z}$ be the oriented angle between $\vec{v}'$ and $\vec{w}$ (with respect to the orientation of $\alpha$), and the length of $\alpha$ be $l>0$. Then the complex distance between $\vec{v}$ and $\vec{w}$ along $\alpha$ is defined to be $\bold{d}_{\alpha}(\vec{v},\vec{w})=l+\theta i$.

For an oriented closed geodesic $\gamma$ in a hyperbolic $3$-manifold, we define its complex length in a similar way. Choose an arbitrary point $p$ on $\gamma$ and a unit normal vector $\vec{v}$ of $\gamma$ at $p$, then we can consider $\gamma$ as an oriented geodesic arc from $p$ to $p$. Then the complex length of $\gamma$ is defined to be $\bold{l}(\gamma)=\bold{d}_{\gamma}(\vec{v},\vec{v})$. This complex length not only measures the length of $\gamma$ in the usual sense, but also measures the rotation angle of the corresponding hyperbolic isometry. Note that the complex length of a closed geodesic does not depend on the orientation and the choices we made.

In the following, we will use $\Pi^0$ to denote the oriented pair of pants.

\begin{defn}
For a closed hyperbolic $3$-manifold $M$, a map $f:\Pi^0\rightarrow M$ is called a {\it skew pair of pants} if $f_*:\pi_1(\Pi^0) \rightarrow \pi_1(M)$ is injective, and $f(\partial \Pi^0)$ is a union of three closed geodesics.
\end{defn}

We will always think about homotopic skew pair of pants as the same object, and we will use $\Pi$ to denote a skew pair of pants $f:\Pi^0\rightarrow M$ when it does not cause any confusion.

Let $C_1$, $C_2$ and $C_3$ be the three oriented boundary components of $\Pi^0$, then let $\gamma_1$, $\gamma_2$ and $\gamma_3$ be the three oriented closed geodesics $f(C_1)$, $f(C_2)$ and $f(C_3)$ respectively. Let $a_i$ be the simple arc on $\Pi^0$ which connects $C_{i-1}$ and $C_{i+1}$, such that $a_1$, $a_2$, and $a_3$ are disjoint with each other (they are called seams of $\Pi_0$). Then we can assume that $f(a_i)$ is a geodesic arc perpendicular with both $\gamma_{i-1}$ and $\gamma_{i+1}$ for $i=1,2,3$, and denote $f(a_i)$ by $\eta_i$.

Now we fix one $\gamma_i$, and give orientations for $\eta_{i-1}$ and $\eta_{i+1}$ such that they are both pointing away from $\gamma_i$. Then $\eta_{i-1}$ and $\eta_{i+1}$ divide $\gamma_i$ to two oriented geodesic arcs $\gamma_i^{1}$ and $\gamma_i^{2}$, such that the orientation on $\gamma_i^{1}$ goes from $\eta_{i-1}\cap \gamma_i$ to $\eta_{i+1}\cap \gamma_i$. Let $\vec{v}_{i-1}$ and $\vec{v}_{i+1}$ be the unit tangent vectors of $\eta_{i-1}$ and $\eta_{i+1}$ at $\eta_{i-1}\cap \gamma_i$ and $\eta_{i+1}\cap \gamma_i$ respectively, then we have a pair of vectors $(\vec{v}_{i-1},\vec{v}_{i+1})$ on the unit normal bundle $N^1(\gamma_i)$, which is called the pair of feet of $\Pi$ on $\gamma_i$. The hyperbolic geometry of right-angled hexagons in $\mathbb{H}^3$ implies that $\bold{d}_{\gamma_i^1}(\vec{v}_{i-1},\vec{v}_{i+1})=\bold{d}_{\gamma_i^2}(\vec{v}_{i+1},\vec{v}_{i-1})$.
So we can define the half length of $\gamma_i$ with respect to $\Pi$ by $$\bold{hl}_{\Pi}(C_i)=\bold{d}_{\gamma_i^1}(\vec{v}_{i-1},\vec{v}_{i+1})=\bold{d}_{\gamma_i^2}(\vec{v}_{i+1},\vec{v}_{i-1}).$$

Now we are ready to define {\it good curves} and {\it good pants}.

\begin{defn}
Fix a small number $\epsilon>0$ and a large number $R>0$. For a closed oriented geodesic $\gamma$ in $M$, we say $\gamma$ is an {\it $(R,\epsilon)$-good curve} if $|\bold{l}(\gamma)-R|<2\epsilon$. The set of $(R,\epsilon)$-good curves is denoted by $\bold{\Gamma}_{R,\epsilon}$.

For a skew pair of pants $f:\Pi^0\rightarrow M$, we say it is an {\it $(R,\epsilon)$-good pants} if $|\bold{hl}_{\Pi}(C)-\frac{R}{2}|<\epsilon$ holds for all the three cuffs (boundary components) of $\Pi^0$. The set of $(R,\epsilon)$-good pants is denoted by $\bold{\Pi}_{R,\epsilon}$.
\end{defn}

In the following, we will work with a very small number $\epsilon>0$ and a very large number $R>0$, and the precise value of $\epsilon$ and $R$ will be determined later. When $R$ and $\epsilon$ have been fixed, we will only talk about good curves and good pants, instead of $(R,\epsilon)$-good curves and $(R,\epsilon)$-good pants, when it does not cause any confusion. Note that oriented boundary components of $(R,\epsilon)$-good pants are $(R,\epsilon)$-good curves.

For a good curve $\gamma\in \bold{\Gamma}_{R,\epsilon}$, the normal bundle $N^1(\gamma)$ of $\gamma$ is naturally identified with $\mathbb{C}/\bold{l}(\gamma)\mathbb{Z}+2\pi i \mathbb{Z}$. If we have a skew pair of pants $\Pi$ which has $\gamma$ as one of its oriented boundary component, we can define the half normal bundle of $\gamma$ by $N^1(\sqrt{\gamma})=\mathbb{C}/\bold{hl}_{\Pi}(\gamma)\mathbb{Z}+2\pi i \mathbb{Z}$. Then the pair of feet of $\Pi$ on $\gamma$ are identified to one point in $N^1(\sqrt{\gamma})$, which is called the {\it foot} of $\Pi$ on $\gamma$, and denoted by $foot_{\gamma}(\Pi)$.

Now we are ready to talk about maps from surfaces to closed hyperbolic $3$-manifolds. In the following, we will fix a closed hyperbolic $3$-manifold and work on it.
 
Suppose $S$ is a compact oriented closed surface with negative Euler characteristic, equipped with a pants decomposition $\mathcal{C}$. Then the closure of each component of $S\setminus \mathcal{C}$ is an oriented pair of pants, and we call such a component a pants in $S$.

\begin{defn}
A map $f:S\rightarrow M$ is called {\it viable} if the following conditions hold.
\begin{itemize}
\item For each pants $\Pi$ in $S$, $f|_{\Pi}:\Pi \rightarrow M$ is a skew pair of pants.
\item For any two pants $\Pi$ and $\Pi'$ in $S$ sharing a curve $C\in \mathcal{C}$, $\bold{hl}_{\Pi}(C)=\bold{hl}_{\Pi'}(C)$ holds.
\end{itemize}
\end{defn}

So for a viable map $f:S\rightarrow M$, we will use $\bold{hl}(C)$ to denote $\bold{hl}_{\Pi}(C)$ for each $C\in \mathcal{C}$. For two pants in $S$ sharing a curve $C\in \mathcal{C}$, we give $C$ an arbitrary orientation. Let $\Pi$ be the pants lies to the left of $C$ on $S$, and $\Pi'$ lies to the right. Let $\gamma=f(C)$, and $\bar{\gamma}$ be the same closed geodesic with the opposite orientation, then we can compare the feet of $\Pi$ and $\Pi'$ on $N^1(\sqrt{\gamma})$ by the following shearing parameter (here $N^1(\sqrt{\gamma})$ and $N^1(\sqrt{\bar{\gamma}})$ are naturally identified with each other):
$$s(C)=foot_{\gamma}(f|_{\Pi})-foot_{\bar{\gamma}}(f|_{\Pi'})-\pi i \in N^1(\sqrt{\gamma})=\mathbb{C}/\bold{hl}(C)\mathbb{Z}+2\pi i \mathbb{Z}.$$

Now we can precisely describe the immersed almost totally geodesic surfaces constructed in \cite{KM1}.

\begin{thm}[\cite{KM1}]\label{KM1}
For any closed hyperbolic $3$-manifold $M$, there exists constants $q>0$ and $K>0$, such that for every small enough $\epsilon>0$ and every large enough $R>0$, the following statement holds. There exists a closed surface $S$ equipped a pants decomposition $\mathcal{C}$, and a viable map $f:S\rightarrow M$ such that for any $C\in \mathcal{C}$, we have
\begin{equation}\label{KMcondition}
\left\{ \begin{array}{l}
        |\bold{hl}(C)-\frac{R}{2}|<\epsilon, \\
        |s(C)-1|<KRe^{-qR}<\frac{\epsilon}{R}.
\end{array} \right.
\end{equation}
Moreover, $f_*:\pi_1(S)\rightarrow \pi_1(M)$ is injective.
\end{thm}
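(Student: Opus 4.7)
My plan is to reproduce, at the level of a sketch, the Kahn--Markovic construction. The starting point is the exponential mixing of the frame flow on the unit tangent (or orthonormal frame) bundle $FM$ of $M$: for any two smooth test functions $\phi,\psi$ on $FM$, the correlation $\int \phi\cdot(\psi\circ g_t)\,d\mu$ decays to $\int\phi\int\psi$ exponentially fast in $t$, with rate $q_0>0$ determined by $M$. I would use this to count and equidistribute configurations of oriented geodesic arcs in $M$ of length approximately $R/2$ whose endpoints carry prescribed frame data: namely, a good pants $\Pi\in\bold{\Pi}_{R,\epsilon}$ is essentially three such arcs (the seams) together with the boundary closed geodesics that are forced upon us. The mixing rate lets me show that as $R\to\infty$, the number of good pants grows like $e^{R}\cdot(\text{polynomial})$ and, crucially, that for each good curve $\gamma\in\bold{\Gamma}_{R,\epsilon}$ the set of feet $\{foot_{\gamma}(\Pi):\Pi\in\bold{\Pi}_{R,\epsilon},\ \gamma\subset\partial\Pi\}$ is $KRe^{-qR}$-well-distributed in the half normal bundle $N^1(\sqrt{\gamma})$ with respect to the Haar measure.

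Given this, the construction of $S$ becomes a combinatorial matching problem. For each $\gamma\in\bold{\Gamma}_{R,\epsilon}$, each oriented good pants $\Pi$ with $\gamma$ as a cuff contributes a foot on $N^1(\sqrt{\gamma})$; a pair of pants $(\Pi,\Pi')$ on the two sides of $\gamma$ assembles into a piece of a viable surface with shearing parameter $s(\gamma)$ equal to $foot_\gamma(\Pi)-foot_{\bar\gamma}(\Pi')-\pi i$. So I want a perfect pairing of the incidences $\{(\gamma,\Pi):\gamma\subset\partial\Pi\}$ on each $\gamma$, involving all pants, for which every pairing has shearing within $KRe^{-qR}$ of $1$. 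I would set this up as a bipartite graph (left feet versus translated right feet) and invoke Hall's marriage theorem together with the near-equidistribution of feet; the Hall condition follows because deficiency would violate the measure estimate from mixing. Once a perfect matching is found, glue the corresponding good pants along each $\gamma$ with the dictated twist, obtaining a closed oriented surface $S$ with pants decomposition $\mathcal{C}$ and a viable map $f:S\to M$ verifying $(\ref{KMcondition})$.

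For the $\pi_1$-injectivity I would argue geometrically. Lift $f:S\to M$ to the universal covers: each cuff lifts to a geodesic in $\mathbb H^3$, and the small size of $\epsilon/R$ in both half-length and shearing estimates ensures that the dihedral angles across each cuff and the bending along seams remain uniformly close to $\pi$. By a standard bending/quasigeodesic lemma (Anosov-style or a direct hyperbolic calculation as in Kahn--Markovic), one shows the developing image of any reduced edge-path in the dual graph of $\mathcal{C}$ is a uniform quasigeodesic in $\mathbb H^3$; hence the induced map on fundamental groups has no kernel. This step uses both inequalities in $(\ref{KMcondition})$ simultaneously, since a controlled half-length bounds hyperbolic distortion in each pants while the control on $s(C)$ bounds how adjacent pants bend.

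The main obstacle is the matching step: establishing equidistribution of feet on each $N^1(\sqrt{\gamma})$ with the quantitative rate $KRe^{-qR}$ (needed both to beat the leading term in $s$ and to run Hall's theorem) requires the exponential mixing to be applied not just once but to the joint frame configurations defining good pants, and one must ensure the asymmetry between the two sides of $\gamma$ is small enough for Hall to apply. Everything else (counting pants, assembling the surface, and the quasigeodesic argument for injectivity) is, given the mixing input, standard hyperbolic geometry.
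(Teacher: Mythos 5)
This theorem is cited from Kahn--Markovic \cite{KM1} and is not proved in the paper; the paper only records the statement and, immediately afterward, sketches the strategy: exponential mixing of the frame flow (Theorem \ref{mixing}) gives an integer-valued measure $\mu_0$ on $\bold{\Pi}_{R,\epsilon}$ whose induced feet-measure on each $N^1(\sqrt{\gamma})$ is $KRe^{-qR}$-equivalent to Lebesgue (Proposition \ref{measure}), and the surface is then assembled by a matching that makes $s(C)\approx 1$. Your sketch reproduces that same strategy and is broadly consistent with what the paper reports about the KM argument: mixing $\Rightarrow$ equidistribution of feet $\Rightarrow$ combinatorial matching (Hall) $\Rightarrow$ glued viable surface. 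You are also right that the matching step is the crux; the paper itself (in the proof of Lemma \ref{pass}) notes that the integer measure $\mu_0$ is obtained from a real-valued measure $\mu$ by a rationalization-and-integer-multiple step before Hall-type matching can be applied, a technical point your sketch compresses.

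Where your route diverges from what Kahn--Markovic actually do is the $\pi_1$-injectivity. You propose a direct quasigeodesic/bending argument. The KM proof, as the present paper records in Theorem \ref{KMinj}, is via a boundary-at-infinity argument: the near-Fuchsian representation is shown to be a $(1+K_0\epsilon)$-quasiconformal deformation of a genuine Fuchsian representation, which gives quasi-Fuchsian (hence injective and convex cocompact) directly. These are close in spirit, and indeed the present paper's Section 4 proves its own $\pi_1$-injectivity result for $2$-complexes by a piecewise-linear quasi-isometry argument much closer to what you propose; but if you are trying to reconstruct KM's proof of this particular theorem, you should note that they go through quasiconformal extension rather than a direct quasigeodesic estimate. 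One small imprecision: $s(C)\approx 1$ is a \emph{real} shift of roughly $1$ along the cuff (the $\pi i$ is already subtracted in the definition of $s$), so the imaginary part of $s(C)$ being small is what controls the bending angle, while the real part being close to $1$ is a Fenchel--Nielsen twist condition; describing the whole thing as ``dihedral angle close to $\pi$'' blurs these two distinct constraints.
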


We will call a viable map $f:S\rightarrow M$ an {\it $(R,\epsilon)$-almost totally geodesic surface}, if the inequality \eqref{KMcondition} holds for each $C\in \mathcal{C}$.

The existence of such $(R,\epsilon)$-almost totally geodesic closed surfaces is proved by the following strategy in \cite{KM1}. For any good curve $\gamma$, one can consider all the good pants in $M$ with $\gamma$ as one of its oriented boundary component, then consider all the feet $foot_{\gamma}(\Pi)$ on $N^1(\sqrt{\gamma})$. Kahn and Markovic showed that these feet on $N^1(\sqrt{\gamma})$ are very equidistributed, so they can paste all the good pants together in a proper way such that $|s(C)-1|<\frac{\epsilon}{R}$ holds.

More precisely, Kahn and Markovic constructed an integer valued measure $\mu_0$ on $\bold{\Pi}_{R,\epsilon}$, with the following nice property.

\begin{prop}[\cite{KM1}]\label{measure}
There exists an integer valued $\mu_0$ on $\bold{\Pi}_{R,\epsilon}$ with the following properties. Let $\hat{\partial}\mu_0$ be the counting measure on $$N^1(\sqrt{\bold{\Gamma}_{R,\epsilon}})=\bigcup_{\gamma\in \bold{\Gamma}_{R,\epsilon}}N^1(\sqrt{\gamma})$$ given by the feet of pants in $\bold{\Pi}_{R,\epsilon}$ and weighted by $\mu_0$. Then for any $\gamma \in\bold{\Gamma}_{R,\epsilon}$, there exists a constant $K_{\gamma}\geq 0$, such that $\hat{\partial}\mu_0|_{N^1(\sqrt{\gamma})}$ is $KRe^{-qR}$-equivalent to $K_\gamma \lambda$ for some universal constant $K>0$. Here $\lambda$ is the standard Lebesgue measure on $N^1(\sqrt{\gamma})\cong \mathbb{C}/\bold{hl}(\gamma)\mathbb{Z}+2\pi i\mathbb{Z}$.
\end{prop}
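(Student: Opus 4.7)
The plan is to build $\mu_0$ in two stages: first produce a real-valued measure on $\bold{\Pi}_{R,\epsilon}$ whose foot distribution on each $N^1(\sqrt{\gamma})$ is exponentially close to a Lebesgue profile, then round it to an integer-valued measure while preserving that equidistribution up to the same scale.

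For the first stage, I would reinterpret the count of good pants with cuff $\gamma\in\bold{\Gamma}_{R,\epsilon}$ as a frame-flow correlation on $M$. Each such pants determines, via its seams and the right-angled hexagon relation, a pair of orthogonal ``connections'' of complex length $\approx R/2$ emanating from points of $\gamma$; the foot of the pants on $N^1(\sqrt{\gamma})$ is determined by the initial data of these connections. After specifying the correct geometric weights, the counting breaks up as a correlation integral
\[
\int_{T^1 M}\phi\cdot(\psi\circ F_{R/2})\,dm,
\]
where $F_t$ is the frame flow, $m$ is the Liouville measure, and $\phi,\psi$ are smooth bumps supported near prescribed vectors in $N^1(\sqrt{\gamma})$ and near the corresponding terminal vectors. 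Exponential mixing of the frame flow (Moore, Pollicott) gives
\[
\int\phi\cdot(\psi\circ F_{R/2})\,dm=\Bigl(\int\phi\,dm\Bigr)\Bigl(\int\psi\,dm\Bigr)+O(e^{-qR/2})
\]
for a universal $q>0$. The product term defines a real-valued measure $\mu$ on $\bold{\Pi}_{R,\epsilon}$ whose foot projection is a multiple $K_\gamma\lambda$ of Lebesgue on $N^1(\sqrt\gamma)$; integrating the exponential error across the fibre of length $\approx R$ gives a total error of size $O(Re^{-qR})$, i.e.\ the asserted $KRe^{-qR}$-equivalence.

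For the second stage, naive rounding of $\mu$ to integer values can destroy equidistribution, so I would realize $\mu_0$ as an integral flow in an auxiliary bipartite graph built on $\bold{\Pi}_{R,\epsilon}$. Each good pants contributes three half-edges (one per cuff), and two half-edges at the same $\gamma$ are joined whenever their feet on $N^1(\sqrt{\gamma})$ differ by approximately $\pi i+1$, the configuration forced by the eventual condition $|s(C)-1|<\epsilon/R$. The existence of an integer-valued $\mu_0$ whose foot distribution remains within $KRe^{-qR}$ of $K_\gamma\lambda$ reduces to a Hall-type (or max-flow) condition on this graph, which is enforceable because the slack provided by the first stage dominates the $O(1)$ rounding error on each small subinterval of $N^1(\sqrt\gamma)$, provided $R$ is large enough. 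The main obstacle is making the first stage genuinely uniform in $\gamma\in\bold{\Gamma}_{R,\epsilon}$: a single universal $K$ must govern the equivalence for every good curve simultaneously, which forces the Moore/Pollicott mixing estimates to be applied with test functions of uniformly bounded geometric complexity as $\gamma$ varies. Packaging the counting of good pants so that this uniformity is preserved, and then feeding the resulting near-equidistribution into the combinatorial rounding, is the technical crux.
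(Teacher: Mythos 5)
This proposition is cited directly from Kahn--Markovic's paper \cite{KM1} and is not proved in this paper, so there is no in-text proof to compare against; I will instead assess your sketch against what \cite{KM1} actually does (and against the one clue this paper gives, in the proof of Lemma \ref{pass}, about how $\mu_0$ is obtained).

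Your first stage is the right idea and matches \cite{KM1} in spirit: good pants are counted by a frame-flow correlation integral on $\mathcal{F}(M)$ (built from the tripod construction and the right-angled hexagon geometry), and exponential mixing of the frame flow gives equidistribution of the feet up to an error $O(Re^{-qR})$, which after normalization is exactly the required $KRe^{-qR}$-equivalence to a multiple of Lebesgue measure. The polynomial prefactor comes from the Sobolev norms of the bump functions rather than from ``integrating across a fibre of length $R$,'' but this is a minor mischaracterization, not an error of substance.

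Your second stage, however, does not reflect how $\mu_0$ is obtained in \cite{KM1}, and introduces unnecessary machinery. As the paper itself records in the proof of Lemma \ref{pass}, the integer-valued $\mu_0$ is obtained from the real-valued $\mu$ by perturbing $\mu$ to a nearby rational-valued measure and then clearing denominators by multiplying by a large integer. This simple trick already preserves $\delta$-equivalence: a tiny additive perturbation of the (finitely many) values $\mu(\Pi)$ changes $\hat\partial\mu$ by an arbitrarily small amount, and scaling both $\hat\partial\mu_0$ and $K_\gamma\lambda$ by the same integer $N$ leaves the $\delta$-equivalence condition unchanged, since that condition is homogeneous in the total mass. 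So your worry that ``naive rounding can destroy equidistribution'' is resolved by rational approximation plus scaling, not by a max-flow/Hall argument. The bipartite matching and Hall's theorem machinery you invoke is genuinely used in \cite{KM1}, but for a \emph{different} step: once $\mu_0$ is in hand, one needs to actually glue the pants (with multiplicities given by $\mu_0$) into a closed surface with $|s(C)-1|$ small, and it is there that a perfect-matching argument on the feet along each good curve appears. Your sketch conflates the construction of $\mu_0$ with this subsequent gluing step. For the statement at hand, the combinatorial matching is not needed, and the uniformity in $\gamma$ that you flag as the ``technical crux'' is already supplied by the Margulis-type counting and the fact that the mixing estimate is applied with a fixed family of bump functions independent of $\gamma$.
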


For two Borel measures $\mu$ and $\nu$ on a compact metric space $X$, we say $\mu$ and $\nu$ are {\it $\delta$-equivalent} for some $\delta>0$ if the following conditions hold.
\begin{itemize}
\item $\mu(X)=\nu(X)$.
\item For any Borel measurable subset $A\subset X$, $\mu(A)<\nu(N_{\delta}(A))$ holds. Here $N_{\delta}(A)$ is the $\delta$-neighborhood of $A$ in $X$.
\end{itemize}

In the proof of the existence of good pants (curves) and the equidistribution result, the following exponential mixing property of the frame flow played a crucial role.

\begin{thm}[\cite{Mo},\cite{Po}]\label{mixing}
Let $M$ be a closed hyperbolic $3$-manifold, $\mathcal{F}(M)$ be the frame bundle of $M$, $\Lambda$ be the Liouville measure on $\mathcal{F}(M)$ which is invariant under the frame flow $g_t:\mathcal{F}(M)\rightarrow \mathcal{F}(M)$.

Then there exists a constant $q>0$ that depends only on $M$, such that the following statement holds. Let $\psi,\phi: \mathcal{F}(M)\rightarrow \mathbb{R}$ be two $C^1$ functions, then for any $r\in \mathbb{R}$, $$\Bigl\lvert \Lambda\left(\mathcal{F}(M)\right)\int_{\mathcal{F}(M)}(g_r^*\psi)(x)\phi(x)d\Lambda(x)-\int_{\mathcal{F}(M)}\psi(x)d\Lambda(x)\int_{\mathcal{F}(M)}\phi(x)d\Lambda(x)\Bigr\rvert \leq Ce^{-q|r|}.$$ Here $C>0$ only depends on the $C^1$-norms of $\psi$ and $\phi$.
\end{thm}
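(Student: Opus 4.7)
The plan is to derive Theorem \ref{mixing} from the representation theory of $G=\mathrm{PSL}(2,\mathbb{C})$, following Moore's classical route. First I would identify the frame bundle $\mathcal{F}(M)$ with the homogeneous quotient $\Gamma\backslash G$, where $\Gamma=\pi_1(M)\subset G$ is the uniform lattice uniformizing $M$, using the standard identification of the orthonormal frame bundle of $\mathbb{H}^3$ with $G$ itself. Under this identification the frame flow $g_r$ becomes right translation by the diagonal one-parameter subgroup $a_r=\mathrm{diag}(e^{r/2},e^{-r/2})$, and the Liouville measure $\Lambda$ is, up to a universal normalization, the projection of a bi-invariant Haar measure.

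Next I would decompose the regular representation $L^2(\Gamma\backslash G)$ as a direct integral of irreducible unitary $G$-representations. The one-dimensional trivial subrepresentation, spanned by the constants, contributes exactly the product term $\Lambda(\mathcal{F}(M))^{-1}\int\psi\,d\Lambda\int\phi\,d\Lambda$, so the theorem reduces to an exponential bound for the matrix coefficient $\langle g_r^*\psi^\perp,\phi^\perp\rangle_{L^2}$, where $\psi^\perp,\phi^\perp$ are the projections onto the orthogonal complement of the constants. For the real-rank-one simple group $G$, every nontrivial irreducible unitary representation has matrix coefficients of $K$-finite vectors (with $K=\mathrm{PSU}(2)$) that decay exponentially along $a_r$, at a rate controlled by the spectral parameter. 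Because $M$ is closed, $\Gamma$ is uniform and the trivial representation is isolated in the decomposition of $L^2(\Gamma\backslash G)$; this supplies a uniform spectral gap $q=q(M)>0$ and a bound of the form $|\langle g_r^*\psi^\perp,\phi^\perp\rangle|\leq C'e^{-q|r|}\|\psi^\perp\|_{H^s}\|\phi^\perp\|_{H^s}$ in some Sobolev norm $H^s$ adapted to the $K$-representation theory.

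The last step is to convert this smooth-vector estimate into the stated bound for $C^1$ test functions with a constant controlled only by $C^1$-norms. I would do this by approximating $\psi,\phi$ by smooth functions via convolution with a mollifier on the compact manifold $\mathcal{F}(M)$ and using a standard Sobolev embedding to bound the smooth approximants in terms of the $C^1$ data. The main obstacle is the careful bookkeeping: extracting a single rate $q$ depending only on $M$ (which requires the quantitative isolation of the trivial representation that cocompactness supplies) and making sure the smoothing error is absorbed into a constant depending only on the $C^1$-norms. An alternative route that sidesteps representation theory entirely is Pollicott's approach, in which one codes the geodesic/frame flow symbolically and deduces exponential decay of correlations from a spectral gap of the Ruelle transfer operator on a suitable space of H\"older functions; the delicate step there is analogous, namely obtaining the spectral gap with constants depending only on $M$.
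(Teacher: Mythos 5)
The paper states Theorem~\ref{mixing} as a citation to Moore and Pollicott and does not reproduce a proof, so there is no internal argument to compare against; the relevant question is whether your sketch faithfully renders the cited results, and it does. Your outline is the standard Moore route: identify $\mathcal{F}(M)\cong\Gamma\backslash G$ for $G=\mathrm{PSL}(2,\mathbb{C})$, interpret the frame flow as right translation by the diagonal subgroup, decompose $L^2(\Gamma\backslash G)$ so that the constants split off and produce the product term, and invoke quantitative decay of matrix coefficients for nontrivial unitary representations of a rank-one group, with a uniform rate $q$ supplied by the spectral gap that cocompactness gives. This is exactly the content of \cite{Mo}, and you correctly identify Pollicott's transfer-operator method as the alternative route that \cite{Po} takes.

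One point worth making explicit: the raw matrix-coefficient estimate from this approach is naturally phrased in a Sobolev norm $H^s$ (with $s$ depending on $\dim K$), not the $C^1$ norm, and in fact the paper's own application of this theorem in the proof of Proposition~\ref{Saric} estimates $C$ by an $H_2^2$-Sobolev norm rather than a $C^1$ norm. Since $\|\cdot\|_{H^s}$ is not controlled by $\|\cdot\|_{C^1}$ for $s>1$, the mollification-and-optimization step you describe is genuinely needed: smooth at scale $\delta$, bound the smoothing error in $L^\infty$ by $\delta$ times the $C^1$ norm, bound the $H^s$ norm of the mollified function by $\delta^{-(s-1)}$ times the $C^1$ norm, and then choose $\delta$ as a suitable power of $e^{-r}$. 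This typically shrinks the exponent $q$ by an $s$-dependent factor, which is harmless for the application here. Your instinct to flag the smoothing step as the delicate part is correct; the statement with ``$C^1$-norms'' is a convenient simplification of what the references literally give, and the version actually used downstream in this paper is the Sobolev one.
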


For technical reasons, we need a slightly stronger condition than \eqref{KMcondition} in this paper, so we need the following proposition. In \cite{Sa}, \v{S}ari\'{c} has shown that we can strengthen $|\bold{hl}(C)-\frac{R}{2}|<\epsilon$ to $|\bold{hl}(C)-\frac{R}{2}|<\frac{\epsilon}{R}$, and it is the essential part of the proof, so we only give a very brief proof here.

\begin{prop}\label{Saric}
For any closed hyperbolic $3$-manifold $M$, there exists a constant $q>0$ and a polynomial $P(\cdot)$, such that for every small enough $\epsilon>0$ and large enough $R>0$, the following statement holds. There exists a closed surface $S$ equipped with a pants decomposition $\mathcal{C}$, and a viable map $f:S\rightarrow M$, such that for any $C\in \mathcal{C}$, we have
\begin{equation}\label{prom}
\left\{ \begin{array}{l}
        |\bold{hl}(C)-\frac{R}{2}|<\frac{\epsilon}{R}, \\
        |s(C)-1|<P(R)e^{-qR}<\frac{\epsilon}{R^2}.
\end{array} \right.
\end{equation}
Moreover, $f_*:\pi_1(S)\rightarrow \pi_1(M)$ is injective.
\end{prop}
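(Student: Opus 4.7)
The plan is to revisit Kahn and Markovic's construction using a refined measure supported on a narrower class of good pants. Instead of working with $\bold{\Pi}_{R,\epsilon}$ in full, I would restrict to the sub-class
$$\bold{\Pi}_{R,\epsilon,R} = \bigl\{\, \Pi \in \bold{\Pi}_{R,\epsilon} \,:\, \bigl|\bold{hl}_{\Pi}(C) - \tfrac{R}{2}\bigr| < \tfrac{\epsilon}{R} \text{ for each cuff } C \,\bigr\},$$
and similarly refine the class of good curves to those with $|\bold{l}(\gamma) - R| < 2\epsilon/R$. The guiding principle is that Theorem \ref{mixing} produces equidistribution errors of the order $Ce^{-qR}$, which is exponentially smaller than the width $\epsilon/R$ of the refined window. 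Therefore, restricting to the narrower stratum should only cost a polynomial factor in $R$, and the exponential mixing still dominates.

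First I would establish an analogue of Proposition \ref{measure} for this refined class. Concretely, one partitions $\bold{\Pi}_{R,\epsilon}$ into sub-classes indexed by the three cuff half-lengths in windows of size $\epsilon/R$, and applies Theorem \ref{mixing} to smooth counting functions approximating the characteristic functions of these windows. The standard Margulis-type trick (convolving with a bump function, then integrating against the frame flow) shows that the foot-counting measure $\hat\partial\mu_0$ restricted to $N^1(\sqrt{\gamma})$ is $P(R)e^{-qR}$-equivalent to a multiple of Lebesgue measure, where $P$ is a polynomial absorbing the $C^1$-norms of the smoothed characteristic functions. This is the content of \v{S}ari\'c's argument in \cite{Sa}, and it is the essential point that I would simply quote.

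Next I would re-run Kahn and Markovic's pants-pairing step with this refined measure. Given the equidistribution of feet on each $N^1(\sqrt{\gamma})$ with error $P(R)e^{-qR}$, Hall's marriage theorem (or the explicit bipartite matching in \cite{KM1}) pairs pants across each good curve so that the shearing matches $1$ up to an error of the same order. This produces a viable map $f:S\to M$ with
$$\bigl|\bold{hl}(C) - \tfrac{R}{2}\bigr| < \tfrac{\epsilon}{R}, \qquad |s(C) - 1| < P(R)e^{-qR} < \tfrac{\epsilon}{R^2}$$
for all $C\in\mathcal{C}$, once $R$ is chosen large enough that $P(R)e^{-qR} < \epsilon/R^2$. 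The $\pi_1$-injectivity of $f_*$ follows from the corresponding theorem in \cite{KM1}, since the injectivity argument there only requires the bend angle between adjacent pants to be close to $\pi$ (i.e.\ $s(C)$ close to $1$), a condition that is even stronger under the refined bounds.

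The main obstacle is the refinement of Proposition \ref{measure}, namely the equidistribution of feet when the half-length is restricted to a window of width $\epsilon/R$ rather than $\epsilon$. Carrying out the mixing estimate with sufficient care to absorb the loss into a polynomial factor is the technical heart, and I would defer to the explicit computation in \cite{Sa}. Once this is granted, the pants-pairing step and the verification of $\pi_1$-injectivity are direct adaptations of the original Kahn-Markovic argument, so the proof reduces to checking that no constant along the way is lost that would prevent $P(R)e^{-qR}$ from being the correct order of the final shearing error.
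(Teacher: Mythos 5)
Your proposal takes essentially the same route as the paper: narrow the bump functions used in the Kahn--Markovic measure construction from width $\epsilon$ to width $\epsilon/R$, observe that the resulting blowup of the $C^1$-/Sobolev-norm constant in Theorem \ref{mixing} is only polynomial in $R$ and hence beaten by the exponential mixing rate $e^{-qR}$, and defer the precise execution of this rescaling to \v{S}ari\'c's paper \cite{Sa}. The paper's version is slightly more concrete --- it phrases the refinement as replacing the bump $f_\epsilon$ by $f_{\epsilon/t}(x)\approx t^6 f_\epsilon(xt)$ and tracking the $H^2_2$-Sobolev norm rather than partitioning $\bold{\Pi}_{R,\epsilon}$ into half-length windows as you suggest, but this is a cosmetic difference in describing the same underlying estimate; the matching/marriage step and the $\pi_1$-injectivity check are handled identically (quoted from \cite{KM1}, with the refined bounds being strictly stronger).
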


\begin{proof}
In the introduction of \cite{Sa}, \v{S}ari\'{c} pointed out that $|\bold{hl}(C)-\frac{R}{2}|<\epsilon$ can be replaced by $|\bold{hl}(C)-\frac{R}{2}|<\frac{\epsilon}{R}$. The main reason that such a refinement is applicable is, the exponential mixing property of frame flow (\cite{Mo}, \cite{Po}) gives the exponential rate, which beats any polynomial rate.

More precisely, in Kahn and Markovic's construction in \cite{KM1}, the following function is a crucial ingredient. For an arbitrary point $F_0$ in the frame bundle $\mathcal{F}(\mathbb{H}^3)$, we can choose a $C^1$ bump function $f_{\epsilon}^{F_0}:\mathcal{F}(\mathbb{H}^3)\rightarrow \mathbb{R}_{\geq 0}$ supporting on the $\epsilon$-neighborhood of $F_0$ in $\mathcal{F}(\mathbb{H}^3)$ (here $\epsilon>0$ is smaller than the injectivity radius of $M$), such that $$\int_{\mathcal{F}(\mathbb{H}^3)}f_{\epsilon}^{F_0}(x)d\Lambda(x)=1.$$ By pulling back $f_{\epsilon}^{F_0}$ by $Isom_{+}(\mathbb{H}^3)$ and projecting to $\mathcal{F}(M)$, we get a function $f_{\epsilon}^{F}:\mathcal{F}(M)\rightarrow \mathbb{R}_{\geq 0}$ centered at $F$ for each $F\in \mathcal{F}(M)$. Then Kahn and Markovic's constructions of $(R,\epsilon)$-good pants and immersed almost totally geodesic surfaces start from the function $f_{\epsilon}$.

In \cite{Sa}, \v{S}ari\'{c} gave the following observation. For the time $t$ frame flow, we consider an alternative bump function $f_{\frac{\epsilon}{t}}$. By taking $f_{\frac{\epsilon}{t}}(x)$ to be $t^6\cdot f_{\epsilon}(xt)$ up to a constant close to $1$, we can suppose $\int_{\mathcal{F}(\mathbb{H}^3)}f_{\frac{\epsilon}{t}}(x)d\Lambda(x)=1$. Since the frame flow has exponential mixing rate in term of $t$, while the constant $C$ in Theorem \ref{mixing} can be estimated by the $H_2^2$-Sobolev norm of $f_{\frac{\epsilon}{t}}$, which grows in a polynomially rate, so we have \begin{equation}\label{prompt}
\Bigl\lvert \Lambda\left(\mathcal{F}(M)\right)\int_{\mathcal{F}(M)}(g_t^*f_{\frac{\epsilon}{t}}^{F_1})(x)f_{\frac{\epsilon}{t}}^{F_2}(x)d\Lambda(x)-1\Bigr\rvert \leq P(t)e^{-q|t|}\rightarrow 0
\end{equation}
when $t$ goes to $\infty$. Then all the works in \cite{KM1} are still available under the inequality \eqref{prompt}, and $|\bold{hl}(C)-\frac{R}{2}|<\frac{\epsilon}{R}$ holds.

For the second inequality in \eqref{KMcondition}, under the new bump function $f_{\frac{\epsilon}{R}}$ and inequality \eqref{prompt}, the same argument as in \cite{KM1} gives an integer valued measure $\mu_0$ on $\bold{\Pi}_{R,\frac{\epsilon}{R}}$, such that $\hat{\partial} \mu_0|_{N^1(\sqrt{\gamma})}$ is $P'(R)e^{-qR}$-equivalent to $K_{\gamma}\lambda$ for each $\gamma \in \bold{\Gamma}_{R,\frac{\epsilon}{R}}$, with $P'$ being a polynomial and $K_{\gamma}\geq 0$.

So we can get a gluing of good pants with $|s(C)-1|<P(R)e^{-qR}<\frac{\epsilon}{R^2}$, and the proof of this proposition is done.
\end{proof}

We also need the following two lemmas which are basic for our construction, and their proofs are closely related with Kahn and Markovic's works in \cite{KM1}.

\begin{lem}\label{geodesic}
For any positive integers $p\geq 2$ and $q\geq 1$, then for small enough $\epsilon>0$ and large enough $R>0$, there exists a closed geodesic $\gamma'$ in $M$, such that $|\bold{l}(\gamma')-\frac{R+2\pi i}{p}|<\frac{\epsilon}{qR}$.
\end{lem}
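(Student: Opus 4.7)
The plan is to produce $\gamma'$ by the same exponential-mixing strategy used by Kahn--Markovic and refined by \v{S}ari\'c in Proposition \ref{Saric}, but applied at time-scale $R/p$ and with bump functions whose support has radius of order $\epsilon/(qR)$. The target complex length $\frac{R+2\pi i}{p}$ tells us we want a closed geodesic of real length near $R/p$ whose holonomy rotates normal frames by angle close to $2\pi/p$. Geometrically, this means finding a closed orbit of the frame flow (on $\mathcal{F}(M)$) of length close to $R/p$ which, upon returning, has been rotated by $2\pi/p$ about its first axis.

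Concretely, pick any $F_0\in\mathcal{F}(M)$ and let $F_1\in\mathcal{F}(M)$ be $F_0$ rotated by angle $2\pi/p$ about its first axis. Set $\delta=\frac{\epsilon}{qR}$ and take the rescaled bump functions $f^{F_0}_{\delta},f^{F_1}_{\delta}\colon\mathcal{F}(M)\to\mathbb{R}_{\geq 0}$ of \v{S}ari\'c's type, each supported in the $\delta$-ball around its center and normalized so that $\int f^{F_i}_{\delta}\,d\Lambda=1$. Applying Theorem \ref{mixing} with $t=R/p$ yields
\[
\Bigl\lvert \Lambda(\mathcal{F}(M))\int_{\mathcal{F}(M)}(g_{R/p}^{*}f^{F_1}_{\delta})(x)\,f^{F_0}_{\delta}(x)\,d\Lambda(x)-1\Bigr\rvert\le C(\delta)\,e^{-qR/p},
\]
where $C(\delta)$ is controlled by a Sobolev norm of the $f^{F_i}_{\delta}$, hence grows only polynomially in $1/\delta$, i.e.\ polynomially in $R$. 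For $R$ large, the right-hand side is $\ll 1$ and in particular the integral is positive.

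Positivity forces the existence of a point $x$ in the $\delta$-neighborhood of $F_0$ whose image under $g_{R/p}$ lies in the $\delta$-neighborhood of $F_1$. Following the orbit and closing it up (as in the good-curves construction of \cite{KM1}) produces a closed orbit of the frame flow of length within $O(\delta)$ of $R/p$ and with rotation angle within $O(\delta)$ of $2\pi/p$, whose underlying closed geodesic $\gamma'$ therefore satisfies
\[
\Bigl|\bold{l}(\gamma')-\tfrac{R+2\pi i}{p}\Bigr|<\tfrac{\epsilon}{qR},
\]
provided $R$ is chosen large enough.

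The main technical obstacle is simply verifying that the exponential-versus-polynomial trade-off still works at this finer scale: the bump functions have support of size $\delta=\epsilon/(qR)$ (rather than the fixed $\epsilon$ of \cite{KM1}), so their $C^1$/Sobolev norms blow up polynomially in $R$; but the mixing decay $e^{-qR/p}$ at time $R/p$ still dominates any fixed polynomial in $R$ once $R$ is large, which is exactly the point of \v{S}ari\'c's refinement recalled in the proof of Proposition \ref{Saric}. Apart from this estimate the argument is a direct specialization of the Kahn--Markovic good-curves construction to the single frame-flow time $R/p$ and rotation $2\pi/p$.
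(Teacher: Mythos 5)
Your argument follows exactly the strategy of the paper's proof: fix a frame $F_0$ and its $2\pi/p$-rotation $F_1$ about the first axis, run the \v{S}ari\'c-style rescaled bump functions through the exponential mixing Theorem~\ref{mixing} at time $R/p$, use the exponential-beats-polynomial trade-off to get positivity of the correlation integral, and then close up the resulting near-periodic frame-flow orbit to produce $\gamma'$. So the route is the same as the paper's.

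There is one small but genuine slip in the bookkeeping of constants. You set $\delta=\epsilon/(qR)$, then observe that the closed-up geodesic has complex length within $O(\delta)$ of $(R+2\pi i)/p$, and conclude the bound $<\epsilon/(qR)=\delta$ ``provided $R$ is chosen large enough.'' That inference is not valid: the implicit constant in $O(\delta)$ comes from the elementary hyperbolic-geometry estimate when you connect the endpoints of the near-closed geodesic arc by a short geodesic, and it is a fixed constant $\geq 1$, not something that tends to $0$ as $R\to\infty$. So you may end up with, say, $4\delta$, which is not $<\delta$ no matter how large $R$ is. The fix is simply to shrink the bump-function scale by a suitable constant at the outset; the paper takes $\delta=\epsilon/(4qR)$ (equivalently, replaces $\epsilon$ by $\epsilon/(4pq)$ in the rescaled mixing inequality with $t=R/p$) so that after the $O(1)$ losses in the closing-up step the final error still lands below $\epsilon/(qR)$. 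With that adjustment your proof is complete.
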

\begin{proof}
Take an arbitrary frame $F=(p,\vec{v},\vec{n})$ in $\mathcal{F}(M)$, with $p\in M$, $\vec{v},\vec{n}\in T^1_p(M)$ with $\vec{v} \perp \vec{n}$. Let $\vec{n}'$ be the $\frac{2\pi}{p}$ rotation of $\vec{n}$ along $\vec{v}$, and let $F'=(p,\vec{v},\vec{n}')$.

Now we consider functions $f_{\frac{\epsilon}{4qR}}^F$ and $f_{\frac{\epsilon}{4qR}}^{F'}$ as in the proof of Proposition \ref{Saric}. By applying equation \eqref{prompt}, with $t$ replaced by $\frac{R}{p}$ and $\epsilon$ replaced by $\frac{\epsilon}{4pq}$, there exists an oriented geodesic arc $\alpha$ in $M$, with two frames $\hat{F}$ and $\hat{F}'$ at its initial and terminal points respectively, such that the following conditions hold.
\begin{enumerate}
\item The parallel transportation of $\hat{F}$ along $\alpha$ to its terminal point equals $\hat{F}'$.
\item The first vector component of $\hat{F}$ is tangent to $\alpha$.
\item The distance between $F$ and $\hat{F}$, and the distance between $F'$ and $\hat{F}'$ in $\mathcal{F}(M)$ are both smaller than $\frac{\epsilon}{4qR}$.
\end{enumerate}

Then by connecting the initial and terminal points of $\alpha$ by an $\frac{\epsilon}{2qR}$-short geodesic in $M$, we get a closed path which is homotopic to a closed geodesic $\gamma'$. Then this $\gamma'$ satisfies $|\bold{l}(\gamma')-\frac{R+2\pi i}{p}|<\frac{\epsilon}{qR}$, by elementary estimations in the hyperbolic geometry.
\end{proof}

\begin{lem}\label{pass}
There exists a universal constant $D>0$, such that for any small enough $\epsilon>0$ and large enough $R>0$, the following statement holds. For any closed geodesic $\gamma\in \bold{\Gamma}_{R,\frac{\epsilon}{DR}}$, there exists a closed surface $S$ with a pants decomposition $\mathcal{C}$, such that there exists an $(R,\frac{\epsilon}{R})$-almost totally geodesic immersion $f:S\looparrowright M$, and $f(C)=\gamma$ for some $C\in \mathcal{C}$.
\end{lem}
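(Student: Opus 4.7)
The plan is to re-run the construction of Kahn and Markovic in the strengthened form of Proposition \ref{Saric}, but to tilt the underlying measure $\mu_0$ on good pants so that two distinguished pants having $\gamma$ as a cuff are forced into the surface and are paired along $\gamma$ in the gluing. The enabling fact is that $\hat{\partial}\mu_0|_{N^1(\sqrt{\gamma})}$ has mass of order $e^R/R^2$, so inserting a handful of additional pants is an $O(1)$ perturbation which is harmless both for the equidistribution and for the downstream Hall-theoretic matching.

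First, using the equidistribution of feet underlying Proposition \ref{Saric}, I would locate good pants $\Pi_0,\Pi_0' \in \bold{\Pi}_{R,\epsilon/R}$ with $\gamma$ and $\bar{\gamma}$ respectively as one of their oriented cuffs, chosen so that $|foot_\gamma(\Pi_0) - foot_{\bar{\gamma}}(\Pi_0') - \pi i - 1| < P(R)e^{-qR}$ in $N^1(\sqrt{\gamma})$. Because the feet of good pants are $P(R)e^{-qR}$-equidistributed on $N^1(\sqrt{\gamma})$, the two feet can be prescribed essentially independently up to that precision. Since any skew pair of pants $\Pi$ with $\gamma$ as a cuff satisfies $\bold{hl}_\Pi(\gamma) = \bold{l}(\gamma)/2$, our hypothesis $\gamma \in \bold{\Gamma}_{R,\epsilon/(DR)}$ already forces $|\bold{hl}_{\Pi_0}(\gamma)-R/2| < \epsilon/(DR)$, which is below $\epsilon/R$ for $D\ge 1$.

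Next, replace $\mu_0$ by the augmented measure $\mu_0' := \mu_0 + \delta_{\Pi_0} + \delta_{\Pi_0'}$. Its induced foot measure $\hat{\partial}\mu_0'$ differs from $\hat{\partial}\mu_0$ by at most six Dirac masses (on the half-normal bundles of $\gamma$ and of the at most four other cuffs of $\Pi_0,\Pi_0'$), so it is still $P'(R)e^{-qR}$-equivalent to a multiple of Lebesgue measure on each half-normal bundle it meets, for a slightly larger polynomial $P'$. Applying the Hall-marriage pairing argument of \cite{KM1} to $\mu_0'$, while forcing the pairing on $N^1(\sqrt{\gamma})$ to match $foot_\gamma(\Pi_0)$ with $foot_{\bar{\gamma}}(\Pi_0')$, yields a closed surface $S$ with pants decomposition $\mathcal{C}$ and a viable map $f : S \to M$ satisfying the conclusions of Proposition \ref{Saric}, namely $|\bold{hl}(C)-R/2|<\epsilon/R$ and $|s(C)-1|<\epsilon/R^2$ for each $C\in\mathcal{C}$. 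By construction $\gamma$ is a cuff of $\mathcal{C}$, and the $\pi_1$-injectivity (hence the immersion $f:S\looparrowright M$) follows from the same closeness-to-totally-geodesic argument as in Theorem \ref{KM1}.

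The hard step is the first one: the two feet $foot_\gamma(\Pi_0)$ and $foot_{\bar{\gamma}}(\Pi_0')$ must be simultaneously prescribable with error below $\epsilon/R^2$ while $\Pi_0,\Pi_0'$ remain in $\bold{\Pi}_{R,\epsilon/R}$. This is precisely the equidistribution output of the exponentially mixing frame flow (Theorem \ref{mixing}), applied through the scaled bump-function scheme from the proof of Proposition \ref{Saric}. The universal constant $D$ is then chosen to absorb the polynomial prefactors arising both here and in the perturbed Hall matching, so that $\gamma \in \bold{\Gamma}_{R,\epsilon/(DR)}$ is enough to guarantee that $f$ is $(R,\epsilon/R)$-almost totally geodesic.
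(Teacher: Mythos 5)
Your approach is genuinely different from the paper's, and it has a real gap at the decisive step. The paper's proof goes through the \emph{real-valued} pre-measure $\mu$ that underlies the integer measure $\mu_0$ in Proposition \ref{measure}: it reduces the lemma to showing $\hat{\partial}\mu(N^1(\sqrt{\gamma}))>0$, observes that a small enough rational perturbation then keeps $\hat{\partial}\mu_0(N^1(\sqrt{\gamma}))>0$ so that the Kahn--Markovic surface built from $\mu_0$ is forced to use \emph{some} good pants with cuff $\gamma$, and finally verifies the positivity condition using the explicit frame-flow criterion from Section 4.8 of \cite{KM1}. The constant $D$ is pinned down there: it is the width parameter in the bump function $f_{\frac{\epsilon}{DR}}$ entering $a_{\alpha}(F_1,F_2)$, and the hypothesis $\gamma\in\bold{\Gamma}_{R,\frac{\epsilon}{DR}}$ is exactly what is needed to make those integrals positive. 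No pairing has to be forced across $\gamma$ --- once $\gamma$ is a cuff of some pants that $\mu_0$ uses, $\gamma$ lies in the pants decomposition of the resulting surface, and taking the connected component through that pants finishes the proof.

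Your proposal begs the crucial question. You begin by ``locating good pants $\Pi_0,\Pi_0'\in\bold{\Pi}_{R,\epsilon/R}$ with $\gamma$ and $\bar\gamma$ as cuffs'' and then invoke equidistribution of feet on $N^1(\sqrt{\gamma})$. But Proposition \ref{measure} only guarantees $\hat{\partial}\mu_0|_{N^1(\sqrt{\gamma})}$ is close to $K_\gamma\lambda$ with $K_\gamma\ge 0$; for all you know $K_\gamma=0$ and the set $\{\Pi\in\bold{\Pi}_{R,\epsilon/R}:\gamma\text{ is a cuff of }\Pi\}$ is empty. Establishing $K_\gamma>0$ for a \emph{prescribed} good curve $\gamma$ is the entire content of the lemma, and it is where the constant $D$ and the frame-flow computation enter; your proposal nowhere supplies this. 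Beyond that, the Dirac-mass perturbation $\mu_0'=\mu_0+\delta_{\Pi_0}+\delta_{\Pi_0'}$ is more delicate than you suggest: the $\delta$-equivalence in Proposition \ref{measure} requires $\mu(A)<\nu(N_\delta(A))$ for \emph{every} Borel $A$, and if $A$ is a tiny neighborhood of the new foot then $\nu(N_\delta(A))$ is only $O(K_\gamma\delta^2)$ while the added mass is $1$; whether this is absorbable depends on the relative sizes of $K_\gamma$ and $\delta=P(R)e^{-qR}$, which in turn depends on the mixing rate $q$, and is not automatic. Finally, the ``force the Hall matching to pair $\Pi_0$ with $\Pi_0'$'' step is both an extra complication the lemma does not need (the conclusion only asks that $\gamma=f(C)$ for some $C\in\mathcal{C}$, not that a particular pants-pair meets along it) and not something the Hall argument of \cite{KM1} provides for free; you would need a separate argument that the residual bipartite system still satisfies Hall's condition after removing the forced edge. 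I recommend discarding the forced-pairing/Dirac-perturbation scaffolding and instead proving directly that $\hat{\partial}\mu(N^1(\sqrt{\gamma}))>0$ under the hypothesis $\gamma\in\bold{\Gamma}_{R,\frac{\epsilon}{DR}}$, which is what the paper does.
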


\begin{proof}
In \cite{KM1}, the integer valued measure $\mu_0$ in Proposition \ref{measure} is given by a real valued measure $\mu$ on $\bold{\Pi}_{R,\epsilon}$ ($\bold{\Pi}_{R,\frac{\epsilon}{R}}$ in our case), by first perturbing $\mu$ to a rational valued measure, then take an integer multiple.

So we need only to show that $\hat{\partial}\mu(N^1(\sqrt{\gamma}))>0$. In this case, we can take a small enough perturbation of $\mu$ so that $\hat{\partial}\mu_0(N^1(\sqrt{\gamma}))>0$ holds. Then in the construction of almost totally geodesic surfaces instructed by $\mu_0$, we must use some good pants with one of its cuff being $\gamma$. Then we take the component of the Kahn-Markovic surface which contains this good pants.

By the proof in Section 4.8 of \cite{KM1}, $\hat{\partial}\mu(N^1(\sqrt{\gamma}))>0$ if and only if there exist two frames $F_1=(p_1,\vec{v}_1,\vec{n}_1)$ and $F_2=(p_2,\vec{v}_2,\vec{n}_2)$, and two geodesic arcs $\alpha_1$ and $\alpha_2$ in $M$, such that the following conditions hold.
\begin{enumerate}
\item $\alpha_1$ has initial point $p_1$ and terminal point $p_2$, while $\alpha_2$ has initial point $p_2$ and terminal point $p_1$.
\item $\alpha_1\alpha_2$ is homotopic to $\gamma$.
\item Let $\omega(F_1)=(p_1,\omega(\vec{v}_1),\vec{n}_1)$ be the $\frac{2\pi}{3}$-rotation of $F_1$ with respect to $\vec{n}_1$, and $\bar{\omega}(F_2)=(p_2,\bar{\omega}(\vec{v}_2),\vec{n}_2)$ be the $\frac{4\pi}{3}$-rotation of $F_2$ with respect to $\vec{n}_2$, then both $a_{\alpha_1}(F_1,F_2)$ and $a_{\alpha_2}(\omega(F_1),\bar{\omega}(F_2))$ are positive.
\end{enumerate}

Under the modified Kahn-Markovic condition \eqref{prom}, for two frames $F_1$ and $F_2$ in $M$ with a geodesic arc $\alpha$ connecting their base points, $a_{\alpha}(F_1,F_2)$ is defined by the following way. Take two frames $\hat{F}_1$ and $\hat{F}_2$ in $\mathcal{F}(\mathbb{H}^3)$ projecting to $F_1$ and $F_2$ respectively, such that the geodesic arc in $\mathbb{H}^3$ connecting the base points of $\hat{F}_1$ and $\hat{F}_2$ projects to $\alpha$. Let $r=\frac{R}{2}+\ln{\frac{4}{3}}$, and let $g_{\frac{r}{4}}(\hat{F}_1)=(p'_1,\vec{v}'_1,\vec{n}'_1)$, $g_{\frac{r}{4}}(\hat{F}_2)=(p'_2,\vec{v}'_2,\vec{n}'_2)$, then $a_{\alpha}(F_1,F_2)$ is defined by:
$$a_{\alpha}(F_1,F_2)=\int_{\mathcal{F}(M^3)}(g_{\frac{r}{2}}^*f_{\frac{\epsilon}{DR}}^{(p'_1,\vec{v}'_1,\vec{n}'_1)})(x)f_{\frac{\epsilon}{DR}}^{(p'_2,-\vec{v}'_2,\vec{n}'_2)}(x)d\Lambda(x).$$
Here $D>0$ is some universal constant.

Then it is easy to check that if $\gamma\in \bold{\Gamma}_{R,\frac{\epsilon}{DR}}$, frames $F_1$ and $F_2$ satisfying the above conditions do exist.
\end{proof}

\section{Geometric Constructions}

In this section, for each closed hyperbolic $3$-manifold $M$, we will construct an immersed $\pi_1$-injective $2$-complex $X_p\looparrowright M$ which has good pants as its building blocks. Here $X_p$ is a local model of the homological $\mathbb{Z}_p$-torsion. Then we will show that the immersion $X_p\looparrowright M$ provides homological torsion in some finite cover of $M$.

\subsection{Construction of a $2$-complex}

We first give a brief sketch of our construction $X_p\looparrowright M$.

At first, there exists a Kahn-Markovic surface $f:S\looparrowright M$, such that for some $C\in\mathcal{C}$, $f(C)$ goes along some closed geodesic $\gamma'$ for $p$ times with $\bold{l}(\gamma')$ close to $\frac{R+2\pi i}{p}$. Then we cut $S$ along $C$, and quotient the two boundary components by $\frac{2\pi}{p}$-rotations, to get an immersed $2$-complex $X_p\looparrowright M$. By doing cut-and-past surgeries, we can make sure that the singular curves on $X_p$ are far away from each other, which guarantees that the immersion $X_p\looparrowright M$ is $\pi_1$-injective.

Now we fix a closed hyperbolic $3$-manifold $M$, and work with some very small $\epsilon>0$ and very large $R>0$ which will be determined later. Here we divide the construction into a few steps.

{\bf Step I.} By Lemma \ref{geodesic}, for any positive integer $p \geq 2$, there exists a closed geodesic $\gamma'$ in $M$ with $|\bold{l}(\gamma')-\frac{R+2\pi i}{p}|<\frac{\epsilon}{pDR}$ (here $D>0$ is the constant in Lemma \ref{pass}). Let $\gamma$ be the closed geodesic which travels around $\gamma'$ for $p$ times, then $\gamma$ is a nonprimitive closed geodesic with $|\bold{l}(\gamma)-R|<\frac{\epsilon}{DR}$, so $\gamma\in \bold{\Gamma}_{R,\frac{\epsilon}{DR}}$.

{\bf Step II.} By Lemma \ref{pass}, there exists an immersed $(R,\frac{\epsilon}{R})$-almost totally geodesic closed surface $f:S\looparrowright M^3$, such that for the corresponding pants decomposition $\mathcal{C}$ of $S$, there exists $C\in \mathcal{C}$ such that $f(C)=\gamma$. By taking a two-fold cover of $S$ if necessary, we can suppose that $C$ is a non-separating curve on $S$ and the two pants adjacent to $C$ are distinct. Let $S'$ be the surface obtained from $S$ by cutting along $C$, then $S'$ has an induced pants decomposition $(S',\mathcal{C'})$ (here $\mathcal{C'}$ does not contain the boundary of $S'$). Let $C_1$ and $C_2$ be the two boundary components of $S'$, and they are given orientations such that $[C_1]-[C_2]=0\in H_1(S';\mathbb{Z})$.

{\bf Step III.} Let $\rho_i:C_i\rightarrow C_i$, $i=1,2$ be the $\frac{2\pi}{p}$-rotation on the circle. Then we define $X_p$ to be the $2$-complex obtained from $S'$ quotient by the $\rho_i$-action for $i=1,2$, and let $c_i$ be the oriented embedded circle in $X_p$ which is the image of $C_i$. Since $C_1$ and $C_2$ are both mapped to $\gamma'$ for $p$ times, $f:S\looparrowright M$ induces a map $f:X_p\looparrowright M$ with $c_i$ mapped to $\gamma'$ for $i=1,2$. Then the pants decomposition $\mathcal{C'}$ on $S'$ and two curves $C_1,C_2$ induce a "pants decomposition" on $X_p$, which is denoted by $(X_p,\mathcal{C'},\{C_1, C_2\})$. Note that $C_1$ and $C_2$ are not embedded curves in $X_p$.

{\bf Step IV.} Now we define a graph $G(X_p)$ from $(X_p,\mathcal{C'},\{C_1, C_2\})$. Vertices of $G(X_p)$ are pants in $X_p$, two vertices are connected by an edge if the corresponding two pants share some $C\in \mathcal{C'}$. $G(X_p)$ is a trivalent graph except at two vertices $v_1,v_2$. These two vertices correspond with the two pants in $X_p$ containing $c_1$ and $c_2$ respectively, and both of them are degree-$2$ vertices.

By a path in a graph $G$, we mean a sequence of oriented edges in $G$, such that for two adjacent edges $e$ and $e'$ in the path, the terminal vertex of $e$ equals the initial vertex of $e'$. For a path in $G$, its (combinatorial) length is defined to be the number of oriented edges it contains, counted with multiplicity. We say a path in $G$ is inessential, if its initial and terminal vertices are the same vertex $v$, and the corresponding map between topological spaces $(I,\partial I)\rightarrow (G,\{v\})$ is homotopic to the constant map. We say a path is essential if it is not inessential.

Let $l(G(X_p))$ be the length of the shortest essential path in $G(X_p)$ with end points in $\{v_1,v_2\}$, and $n(G(X_p))$ be the number of such paths. We define the complexity of $G(X_p)$ to be $c(G(X_p))=\left(l(G(X_p)),-n(G(X_p))\right)$, and we will do inductive constructions to make $G(X_p)$ more and more complicated until $l(G(X_p))>Re^{\frac{R}{4}}$.

For any shortest essential path $\alpha$ with length $l\geq 1$, let $k=[\frac{l+1}{2}]$ and $e$ be the $k$-th edge on $\alpha$. Let $C_0\in \mathcal{C'}$ be the curve in the pants decomposition of $X_p$ corresponding with $e$, and let $C_0'$ denote the corresponding curve in $S$. Take a copy of $S$, and pass to a two-fold cover if necessary, such that $C_0'$ is a non-separating curve in $S$. Then we cut $X_p$ and $S$ along $C_0$ and $C_0'$ respectively, and re-paste them together to get a connected $2$-complex $X_p'$ with an induced pants decomposition $\mathcal{C''}$ (such kind of surgeries have appeared in \cite{KM2}). Since the pants in $S$ and $X_p$ have the same feet on $N^1(\sqrt{f(C_0)})$, we still have
$$
\left\{ \begin{array}{l}
        |\bold{hl}(C)-\frac{R}{2}|<\frac{\epsilon}{R}, \\
        |s(C)-1|<\frac{\epsilon}{R^2},
\end{array} \right.
$$
for any $C\in\mathcal{C''}$.

After this surgery, the shortest essential pathes in $G(X_p)$ going through the edge $e$ have been broken, and the corresponding length increases at least by $1$ in $G(X_p')$. So the complexity $c(G(X_p'))=\left(l(G(X_p')),-n(G(X_p'))\right)$ is bigger than $c(G(X_p))=\left(l(G(X_p)),-n(G(X_p))\right)$, i.e. either $l(G)$ increases, or $l(G)$ does not change and $-n(G)$ increases. After finitely many steps of such constructions, we can assure $l(G)>Re^{\frac{R}{4}}$. For simplicity, we still denote the $2$-complex by $X_p$, and denote the pants decomposition by $(X_p,\mathcal{C'},\{C_1,C_2\})$.

\begin{defn}\label{viable}
A representation $\rho:\pi_1(X_p)\rightarrow PSL_2(\mathbb{C})$ is called a {\it viable representation} if the following conditions hold.
\begin{enumerate}
\item For each $C\in \mathcal{C}\cup\{C_1,C_2\}$, let $g_C$ be a generator of $\pi_1(C)$, then $\rho(g_C)$ is a hyperbolic element in $PSL_2(\mathbb{C})$.
\item For each pants $\Pi$ in $X_p$, $\rho|_{\pi_1(\Pi)}$ is an injective map, and $\rho(\pi_1(\Pi))$ is a discrete subgroup of $PSL_2(\mathbb{C})$.
\item For any two pants $\Pi, \Pi'$ sharing some $C\in \mathcal{C}\cup\{C_1,C_2\}$, $\bold{hl}_{\Pi}(C)=\bold{hl}_{\Pi'}(C)$ holds.
\end{enumerate}
Note that although $\rho(\pi_1(X_p))$ may not be a discrete subgroup of $PSL_2(\mathbb{C})$, $\bold{hl}_{\Pi}(C)$ can still be defined.

A map $f:X_p\rightarrow M$ is called a {\it viable map} if $f_*:\pi_1(X_p)\rightarrow \pi_1(M)\subset PSL_2(\mathbb{C})$ is a viable representation.
\end{defn}

As a summary of the above construction, we have the following proposition which guarantees the existence of an immersed $2$-complex $X_p\looparrowright M$.

\begin{prop}\label{mappedcomplex}
For any closed hyperbolic $3$-manifold $M$, and any positive integer $p\geq 2$, there exists a constant $\hat{\epsilon}>0$, such that for any $0<\epsilon<\hat{\epsilon}$ and any $R$ sufficiently large, the following statement holds. There exists a $2$-complex $X_p$ as above with a pants decomposition $(X_p,\mathcal{C'}, \{C_1, C_2\})$, and a viable map $f:X_p \looparrowright M$ such that the following conditions hold.
\begin{enumerate}
\item The induced graph $G(X_p)$ satisfies $l(G(X_p))>Re^{\frac{R}{4}}$.
\item For any $C\in \mathcal{C'}\cup \{C_1, C_2\}$, $|\bold{hl}(C)-\frac{R}{2}|<\frac{\epsilon}{R}$.
\item For any $C \in \mathcal{C'}$, $|s(C)-1|<\frac{\epsilon}{R^2}$.
\item Let $c_k$ be the image of $C_k$ in the $2$-complex $X_p$, then $|\bold{l}(c_k)-\frac{R+2\pi i}{p}|<\frac{\epsilon}{pR}$ for $k=1,2$.
\end{enumerate}
\end{prop}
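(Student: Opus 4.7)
The plan is to verify each of the four conclusions of Proposition~\ref{mappedcomplex} in turn, following the construction described in Steps I--IV, and to check that the iterative surgery in Step IV terminates after finitely many stages. First, fix the threshold $\hat{\epsilon} > 0$ and the lower bound on $R$ so that Proposition~\ref{Saric}, Lemma~\ref{geodesic}, and Lemma~\ref{pass} are simultaneously applicable. Writing $D$ for the universal constant of Lemma~\ref{pass}, I would apply Lemma~\ref{geodesic} with the integer $p$ and auxiliary parameter $q = pD$ to obtain a closed geodesic $\gamma'$ with $|\bold{l}(\gamma') - (R + 2\pi i)/p| < \epsilon/(pDR)$. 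This yields conclusion~(4) immediately, since $c_1$ and $c_2$ are traced by $\gamma'$. The $p$-fold iterate $\gamma$ then satisfies $|\bold{l}(\gamma) - R| < \epsilon/(DR)$, so $\gamma \in \bold{\Gamma}_{R, \epsilon/(DR)}$, which is exactly the hypothesis needed for Lemma~\ref{pass}.

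Next, invoke Lemma~\ref{pass} to produce an $(R, \epsilon/R)$-almost totally geodesic surface $f: S \looparrowright M$ whose pants decomposition $\mathcal{C}$ contains a curve mapped to $\gamma$; conditions (2) and (3) for $(S, \mathcal{C})$ hold by definition. If that curve is separating or has only a single adjacent pants, I would pass to a connected $2$-fold cover of $S$ via an appropriate homomorphism $\pi_1(S) \to \mathbb{Z}/2$; the cover remains $(R, \epsilon/R)$-almost totally geodesic and the chosen curve lifts to a non-separating curve with two distinct adjacent pants. Cutting produces $S'$ with oriented boundary $C_1, C_2$ satisfying $[C_1] - [C_2] = 0$ in $H_1(S'; \mathbb{Z})$, and forming $X_p$ by the $2\pi/p$-rotation quotients preserves every half-length on $\mathcal{C}' \cup \{C_1, C_2\}$ and every shearing parameter on interior curves, so (2) and (3) persist on the initial $X_p$.

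To establish (1), I would run the inductive surgery of Step IV: at each stage select the middle edge of a shortest essential path in $G(X_p)$ between $\{v_1, v_2\}$, and perform the cut-and-paste with a fresh $(R, \epsilon/R)$-almost totally geodesic surface $S$ supplied by Proposition~\ref{Saric} (passing to a $2$-fold cover of $S$ to make the relevant curve non-separating). Every formerly-shortest essential path through the chosen edge acquires at least one additional edge while no new shorter path is created, so the complexity $(l(G), -n(G))$ strictly increases in the lexicographic order; well-foundedness of this order, together with the termination criterion $l > Re^{R/4}$, ensures that finitely many stages suffice.

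The main technical point, which I anticipate will be the chief obstacle, is verifying that the cut-and-paste surgery preserves conditions (2) and (3) at the swapped curve $C_0$. Condition (2) is automatic since both sides already have $\bold{hl}(C_0) = \bold{l}(\gamma)/2$ up to $\epsilon/R$. For (3), the subtlety is that re-pairing pants across the cut could in principle change $s(C_0)$. I would argue as in \cite{KM2}: the $P'(R) e^{-qR}$-equidistribution of feet on $N^1(\sqrt{f(C_0)})$ provided by Proposition~\ref{measure} (in the strengthened form of Proposition~\ref{Saric}) guarantees that the four feet involved in the swap are all within $\epsilon/R^2$ of a common reference pattern satisfying $s = 1$, so the new shearing parameter still satisfies $|s(C_0) - 1| < \epsilon/R^2$. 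Once this is in place, the proof of the proposition reduces to collecting the estimates already established in Steps I--IV.
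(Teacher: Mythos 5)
Your Steps I--III track the paper's construction faithfully, and the termination argument for the complexity $(l(G),-n(G))$ is in order. The gap is in Step IV, specifically in how you justify preservation of the shearing condition~(3) at the swapped curve $C_0$.

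You propose performing the cut-and-paste with ``a fresh $(R,\epsilon/R)$-almost totally geodesic surface supplied by Proposition~\ref{Saric},'' and then argue that the $P'(R)e^{-qR}$-equidistribution from Proposition~\ref{measure} forces the four feet involved in the swap to lie within $\epsilon/R^2$ of a common $s=1$ pattern. This is not something equidistribution gives you. Equidistribution controls the counting measure $\hat{\partial}\mu_0|_{N^1(\sqrt{f(C_0)})}$ at scale $P'(R)e^{-qR}$, i.e.\ the \emph{global} distribution of all feet on $N^1(\sqrt{f(C_0)})$; it says nothing about where the \emph{particular} feet of the two pants adjacent to your chosen curve in the fresh surface sit relative to the feet of the pants in $X_p$. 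After re-pasting, the new shearing at $C_0$ is $s(C_0)=foot_{\gamma}(\Pi_1)-foot_{\bar\gamma}(\Pi_2')-\pi i$ where $\Pi_1\subset X_p$ and $\Pi_2'\subset S'$; for this to be close to $1$ you would need $foot(\Pi_2')\approx foot(\Pi_2)$ with $\Pi_2\subset X_p$ the old neighbor, and a generic Kahn--Markovic surface gives you no such matching. There is also a secondary problem: Proposition~\ref{Saric} by itself does not supply a surface containing a cuff mapping to the specific geodesic $f(C_0)$; that requires Lemma~\ref{pass} and, in turn, that $f(C_0)\in\mathbf{\Gamma}_{R,\frac{\epsilon}{DR}}$, which does not follow from condition~(2) alone since $D>1$.

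The paper avoids both difficulties by taking the auxiliary surface to be a \emph{copy of the same surface} from which the relevant pants of $X_p$ were built (passing to a $2$-fold cover to make $C_0'$ non-separating). Then $C_0'$ automatically maps to $f(C_0)$, and the two pants of the copy adjacent to $C_0'$ are \emph{literally the same maps into $M$} as $\Pi_1,\Pi_2\subset X_p$, so $foot(\Pi_i')=foot(\Pi_i)$ and $s(C_0)$ is unchanged. Your proof should be amended to use a copy of the original $S$ (or of whichever surface the pants adjacent to $C_0$ came from in earlier surgery stages) rather than a fresh Kahn--Markovic surface, and the appeal to equidistribution removed; once that is done the rest of your argument goes through.
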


\begin{rem}
The construction of $X_p\looparrowright M$ is very similar to the construction in \cite{KM2}.

In \cite{KM2}, Kahn and Markovic constructed immersed quasi-fuchsian surfaces $S\looparrowright M$ by pasting (generalized) good pants, the immersion satisfies inequality \eqref{KMcondition} for curves $C\in \mathcal{C}$, except bending along a sparse collection of curves in $\mathcal{C}$ that are far away from each other. Our construction is almost following the same idea with theirs, but we construct immersed $2$-complexes, instead of surfaces.

Moreover, the cut-and-paste technique in Step IV of our construction also appeared in \cite{KM2}. In \cite{KM2}, Kahn and Markovic amalgamated two immersed almost totally geodesic surfaces to one immersed quasi-fushsian surface, and they used the cut-and-paste technique to make sure that the bending curves are far away from each other.
\end{rem}

\subsection{Finite Cyclic Subgroups in Virtual Homology}

The following theorem is the most technical theorem in this paper, which is an analogue of Theorem 2.2 in \cite{KM1}.

\begin{thm}\label{injectivity}
There are universal constants $\hat{\epsilon}>0$ and $\hat{R}>0$ depend only on $p$ and $M$, such that for any $0<\epsilon<\hat{\epsilon}$ and any $R>\hat{R}>0$, the following statement holds. If $X_p$ is a $2$-complex with a pants decomposition $(X_p,\mathcal{C'},\{C_1, C_2\})$ constructed as last section, and $\rho:\pi_1(X_p) \rightarrow PSL_2(\mathbb{C})$ is a viable representation such that the following conditions hold.
\begin{enumerate}
\item The induced graph $G(X_p)$ satisfies $l(G(X_p))>Re^{\frac{R}{4}}$.
\item For any $C\in \mathcal{C'}\cup \{C_1, C_2\}$, $|\bold{hl}(C)-\frac{R}{2}|<\frac{\epsilon}{R}$.
\item For any $C \in \mathcal{C'}$, $|s(C)-1|<\frac{\epsilon}{R^2}$.
\item $|\bold{l}(c_k)-\frac{R+2\pi i}{p}|<\frac{\epsilon}{pR}$ for $k=1,2$.
\end{enumerate}
Then $\rho:\pi_1(X_p)\rightarrow PSL_2(\mathbb{C})$ is an injective map and $\rho(\pi_1(X_p))$ is a convex-cocompact subgroup of $Isom_+(\mathbb{H}^3)$.
\end{thm}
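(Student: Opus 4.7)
My plan is to adapt the proof of Theorem 2.2 of \cite{KM1} (with the refinement of \cite{Sa}) to the $2$-complex setting. I equip $\widetilde{X_p}$ with its natural piecewise-hyperbolic metric: each lifted pants carries the hyperbolic structure determined by the prescribed half-lengths, adjacent pants are pasted along their common cuff with shearing $s(C)$, and at each lift of $c_k$ the $p$ sheets meet along a common edge as a ``book of $p$ pages.'' Using $\rho$, one obtains a $\rho$-equivariant developing map $\widetilde{f}:\widetilde{X_p}\to \mathbb{H}^3$ that is an isometric immersion on each lifted pants (since $\rho|_{\pi_1(\Pi)}$ is discrete and faithful by viability); condition (4) ensures that $\rho(c_k)$ is loxodromic with complex length $(R+2\pi i)/p+O(\epsilon/(pR))$, so the $p$ pages close up consistently along the axis of $\rho(c_k)$ with adjacent pages at dihedral angle $2\pi/p+O(\epsilon/R)$. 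The goal is to prove that $\widetilde{f}$ is a $(\lambda,C)$-quasi-isometric embedding with constants depending only on $p$ and $M$; by equivariance this forces $\ker\rho=1$ (any nontrivial element would move points unboundedly in $\widetilde{X_p}$ but fix their images in $\mathbb{H}^3$), and standard arguments upgrade a quasi-isometric orbit to convex-cocompactness of $\rho(\pi_1(X_p))$ in $\mathrm{Isom}_+(\mathbb{H}^3)$.

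Take any bi-infinite geodesic $\widetilde\gamma$ in $\widetilde{X_p}$ and list its successive curve-lift crossings $\widetilde{D}_1,\widetilde{D}_2,\ldots$, each lifting a curve in $\mathcal{C'}\cup\{C_1,C_2\}$. When $\widetilde{D}_i$ lifts some $C\in\mathcal{C'}$, conditions (2) and (3) give the usual Kahn-Markovic ``small-bending'' estimate: the developed image bends by $O(\epsilon/R)$ at the crease, between two segments each of length $R/2+O(\epsilon/R)$. Iterating the Kahn-Markovic concatenation lemma, the developed image of any stretch of $\widetilde\gamma$ whose $\widetilde{D}_i$'s all lift curves in $\mathcal{C'}$ stays within uniformly bounded Hausdorff distance of an honest hyperbolic geodesic, independently of the number of cuff-crossings. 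When $\widetilde{D}_i$ lifts $c_k$, in contrast, the outgoing segment belongs to a different page of the book along the axis of $\rho(c_k)$, and the resulting bend is an $O(1)$ quantity bounded strictly below $\pi$ in terms of $p$ alone.

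The use of condition (1) is precisely to ensure that these ``book-spine'' bends are very sparsely spaced. Combinatorially, any two consecutive book-spine crossings are separated by at least $Re^{R/4}$ cuff crossings in $\mathcal{C'}$, and since each pants has diameter $\sim R/2$, the corresponding two book-spine axes in $\mathbb{H}^3$ are at distance $\gtrsim (R/2)\cdot e^{R/4}$. Over such a long inter-spine run the developed path stays $O(\epsilon)$-close to a true hyperbolic geodesic by the preceding paragraph, and the angular perturbation left over from the previous book-spine bend contracts well below $\epsilon/R$ by the exponential thinning of nearly parallel geodesic rays in $\mathbb{H}^3$. Concatenating, one obtains a global piecewise-geodesic in $\mathbb{H}^3$ whose segments have length $\gtrsim Re^{R/4}$ and whose bends are bounded by a constant $<\pi$ depending only on $p$; this is the standard setup for a $(\lambda,C)$-quasi-geodesic, and applying it uniformly over all $\widetilde\gamma$ yields the desired quasi-isometric embedding.

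The main obstacle is the precise quantitative handling of the $O(1)$ book-spine bends, which, in contrast to the original Kahn-Markovic framework, cannot be absorbed as small perturbations of a single totally geodesic plane. The needed input is a ``piecewise-geodesic-is-quasi-geodesic'' lemma in $\mathbb{H}^3$ whose constants depend only on a lower bound $L_0$ for segment lengths and an upper bound $\pi-\theta_0$ for the bends; here one takes $L_0\sim Re^{R/4}$ and $\theta_0=\theta_0(p)>0$. The threshold $Re^{R/4}$ in condition (1) is calibrated with generous slack, so that the exponential contraction over a single inter-spine run overwhelms both the accumulated $\mathcal{C'}$-bending error of $O(\epsilon)$ and the residual angular error from the preceding spine bend. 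For large $p$ the angle $\theta_0(p)$ degrades like $1/p$, forcing $\hat R$ to be chosen large in terms of $p$, which is consistent with the dependence of $\hat R$ on $p$ stated in the theorem.
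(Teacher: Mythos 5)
Your high-level strategy — build a developing map, show that images of geodesics in $\widetilde{X_p}$ become uniform quasi-geodesics by controlling the bending angle at lifts of $c_k$ and the length of the surface runs between them, then conclude injectivity and convex-cocompactness — matches the shape of the argument the paper actually runs (which proves the more general Theorem~\ref{injectivity1} and deduces Theorem~\ref{injectivity} as a special case). The final step, ``long segments plus bounded bending gives a quasi-geodesic,'' is already executed in the paper as Lemma~\ref{qi} for the $\rho_0$-model, and condition (1) plays exactly the role you describe.

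The genuine gap is in the sentence ``the resulting bend is an $O(1)$ quantity bounded strictly below $\pi$ in terms of $p$ alone,'' which is asserted but not established, and which is precisely the hard part. For the perturbed representation $\rho$, one has no direct access to the dihedral angle between the developed images of the two pages meeting at the spine, because the developing map is only approximately a union of pieces of hyperbolic planes. The paper handles this in two substantial steps that are absent from your outline. First, in Section~\ref{piecewise}, it replaces the developing map by a piecewise-linear map $h:Y\rightarrow Y'$ on the $1$-skeleton $Y=\widetilde{\mathcal{C}}\cup\widetilde{\mathcal{A}}$ (cuffs and seams) and proves (Theorem~\ref{quasi}) that $h$ is a $(1+K\epsilon/R,\,K(\epsilon+1/R)^{1/5})$-quasi-isometry; the input is not a self-contained ``Kahn--Markovic concatenation lemma'' but the quasiconformal boundary extension from Theorem~\ref{KMinj} promoted to an interior quasi-isometry by McMullen's Theorem~\ref{McM}, after which the near-geodesic Lemma~\ref{delta} compares the two maps. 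Second, Section~4.2 (Theorem~\ref{angle}, via Propositions~\ref{theta} and~\ref{phi}) shows that the direction vector $\Theta(\gamma,\alpha,\overline{xy})$ of a chord seen from a cuff/seam frame changes by at most $1/p$ under $h$. Only with these estimates in hand does Section~4.3 get the spine-crossing angle bound, and even then it requires a case split: when the chord is steep relative to the spine ($\theta\le\pi/2-\tfrac{3}{2p}$), the projection onto the spine direction already yields an interior angle $\ge 1/p$; when the chord is near perpendicular, that projection gives nothing and one must instead use the $p$-separation of the pages and the plane-to-plane angle estimate of Lemma~\ref{twoplanes} to produce the lower bound $(2\cos\frac{1}{2}-1)\tfrac{2}{p}$. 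Your proposal conflates these two regimes into a single claimed $\theta_0(p)$, and also overlooks that the needed control of the surface runs must be quantitative at the level of directions (not just Hausdorff distance) in order to feed into the bending estimate at the next spine. Filling this in would essentially reconstruct Sections~4.1--4.3 of the paper.
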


The proof of Theorem \ref{injectivity} will be deferred to Section 4. In this section, we will focus on proving Theorem \ref{main} by assuming Theorem \ref{injectivity}.

To precisely describe a viable representation $\rho:\pi_1(X_p)\rightarrow PSL_2(\mathbb{C})$ as in Theorem \ref{injectivity}, we first give parameters for such a representation. For each curve $C \in \mathcal{C'}$, it is associate with two complex numbers $\xi_C$ and $\eta_C$ such that $|\xi_C|,|\eta_C|<\epsilon$; for each $C\in \{C_1,C_2\}$, it is associate with a complex number $\xi_i$ with $|\xi_i|<\epsilon$.

We can choose parameters such that the representation $\rho:\pi_1(X_p)\rightarrow PSL_2(\mathbb{C})$ satisfies the following conditions.
\begin{enumerate}
\item For any $C\in \mathcal{C'}\cup\{C_1,C_2\}$, $\bold{hl}(C)=\frac{R}{2}+\frac{\xi_C}{R}$.
\item For any $C\in \mathcal{C'}$, $s(C)=1+\frac{\eta_C}{R^2}$.
\item $\bold{l}(c_k)=\frac{R+2\pi i}{p}+\frac{\xi_i}{pR}$ for $k=1,2$.
\end{enumerate}

Let $\mathbb{D}(0,1)$ be the disc in the complex plane centered at $0$ with radius $1$. Then for each $\tau\in \mathbb{D}(0,1)$, there is a small deformation of $\rho$, denote by $\rho_{\tau}:\pi_1(X_p)\rightarrow PSL_2(\mathbb{C})$, which is defined by the following conditions.
\begin{enumerate}
\item For any $C\in \mathcal{C'}\cup\{C_1,C_2\}$, $\bold{hl}(C)=\frac{R}{2}+\frac{\tau\xi_C}{R}$.
\item For any $C\in \mathcal{C'}$, $s(C)=1+\frac{\tau\eta_C}{R^2}$.
\item $\bold{l}(c_k)=\frac{R+2\pi i}{p}+\frac{\tau\xi_i}{pR}$ for $k=1,2$.
\end{enumerate}

Then $\{\rho_{\tau}\}_{\tau \in \mathbb{D}(0,1)}$ is a continuous family of representations from $\pi_1(X_p)$ to $PSL_2(\mathbb{C})$, such that $\rho_1=\rho$, and $\rho_0$ provides us a standard model of studying the representation $\rho:\pi_1(X_p)\rightarrow PSL_2(\mathbb{C})$.

Let $q:\widetilde{X}_p\rightarrow X_p$ be the universal cover of $X_p$. There is a natural map $\widetilde{f}_0:\widetilde{X}_p\rightarrow \mathbb{H}^3$ to realize the representation $\rho_0$. $\widetilde{f}_0$ maps each component of $\widetilde{X}_p\setminus q^{-1}(c_1\cup c_2)$ to a totally geodesic subsurface in $\mathbb{H}^3$, and two such totally geodesic subsurfaces sharing a geodesic has angle equal to $\frac{2k\pi}{p}$ for some integer $k\ne0$. For each pants $\Pi\subset X_p$, the induced map $\Pi\rightarrow \mathbb{H}^3/\rho_0(\pi_1(\Pi))$ maps $\Pi$ to a totally geodesic pants with $\bold{hl}_{\Pi}(C)=\frac{R}{2}$. 
$\widetilde{f}_0$ induces a path metric on $\widetilde{X}_p$: for any $x,y\in \widetilde{X}_p$, define $d(x,y)=inf\{l(\widetilde{f_0}(\gamma))|\ \gamma$ is a path in $\widetilde{X}_p$ with end points $x$ and $y\}$. By using elementary hyperbolic geometry, we have the following lemma.

\begin{lem}\label{qi}
For $R$ large enough, if the induced graph $G(X_p)$ satisfies $l(G(X_p))>Re^{\frac{R}{4}}$, then $\widetilde{f}_0:(\widetilde{X}_p,d)\rightarrow (\mathbb{H}^3,d_{\mathbb{H}^3})$ is injective and is a quasi-isometric embedding. In particular, $\rho: \pi_1(X_p)\rightarrow PSL_2(\mathbb{C})$ is an injective map.
\end{lem}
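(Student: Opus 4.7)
The representation $\rho_0$ is the ``straightened'' model with $\bold{hl}(C)=R/2$, $s(C)=1$, and $\bold{l}(c_k)=(R+2\pi i)/p$ exactly. Under $\widetilde{f}_0$, each pants develops isometrically onto a totally geodesic pants in $\mathbb{H}^3$; two pants sharing a regular curve $C\in\mathcal{C'}$ lie in a common totally geodesic plane, because $s(C)=1$ encodes exactly zero bending; and the $p$ pants around each preimage $\tilde c$ of a singular curve develop as $p$ half-planes meeting along the geodesic $\widetilde{f}_0(\tilde c)$ with successive dihedral angle $2\pi/p$. The plan is to combine this rigid local picture with the large combinatorial gap $l(G(X_p))>Re^{R/4}$ and run a quasi-geodesic argument on the developing image.

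First I would verify that $(\widetilde{X}_p,d)$ is CAT$(-1)$ by the link criterion: the open $2$-cells are hyperbolic pants; the link of any regular-curve point is a circle of length $2\pi$; and the link of any singular-curve point is a graph with two vertices joined by $p$ arcs of length $\pi$, whose shortest embedded closed geodesic has length $2\pi$. Hence $\widetilde{X}_p$ is uniquely geodesic and $\widetilde{f}_0$ is a $1$-Lipschitz local isometry, so the upper bound $d_{\mathbb{H}^3}(\widetilde{f}_0(x),\widetilde{f}_0(y))\le d(x,y)$ is automatic.

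For the matching lower bound, given $x,y\in\widetilde{X}_p$ take the unique CAT$(-1)$-geodesic $\gamma$ from $x$ to $y$ and decompose it as $\gamma=\gamma_1\cup\cdots\cup\gamma_N$ at its crossings of $q^{-1}(c_1\cup c_2)$. Each $\gamma_i$ lies in a maximal ``flat slab'' (connected union of pants glued flatly across regular curves), on which $\widetilde{f}_0$ develops into a single totally geodesic plane $P_i\subset\mathbb{H}^3$; hence $\widetilde{f}_0|_{\gamma_i}$ is a local geodesic of $\mathbb{H}^3$, which by uniqueness is a genuine geodesic segment of length $|\gamma_i|$ sitting in $P_i$. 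Thus $\widetilde{f}_0(\gamma)$ is a piecewise geodesic whose only bendings occur at singular crossings and have exterior angle $2\pi k/p$ for some $1\le k\le p-1$. For $p\ge 3$ this is uniformly bounded away from $\pi$ by $\pi/p$; for $p=2$ the ``bending'' is exactly $\pi$, i.e.\ the two incident half-planes form a single plane on opposite sides of $\widetilde{f}_0(\tilde c)$, so $\widetilde{f}_0(\gamma)$ is in fact straight at such a crossing.

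The hypothesis $l(G(X_p))>Re^{R/4}$ forces any $\gamma_i$ joining two singular-curve crossings to traverse at least $Re^{R/4}$ regular curves, and the cuff-to-cuff distance inside an $(R,\epsilon/R)$-pants being bounded below by a uniform constant $c_0>0$ yields $|\gamma_i|\ge c_0Re^{R/4}$, astronomically larger than any bending scale. A standard local-to-global quasi-geodesic lemma in $\mathbb{H}^3$ (piecewise geodesics with bending angles $\le \pi-\delta$ and segment lengths $\ge L_0(\delta)$ are uniform $(K,c)$-quasi-geodesics) applies with $\delta=\pi/p$ and produces the lower estimate $d_{\mathbb{H}^3}(\widetilde{f}_0(x),\widetilde{f}_0(y))\ge K^{-1}d(x,y)-c$. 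Since quasi-geodesics in $\mathbb{H}^3$ cannot close up, combined with the local isometry this forces $\widetilde{f}_0$ to be globally injective, and $\rho$ is therefore an injective representation. The main obstacle will be the local-to-global quasi-geodesic lemma with bending angle potentially as large as $\pi-\pi/p$; it is precisely the gigantic segment length $Re^{R/4}$ that makes a Morse-type stability estimate go through.
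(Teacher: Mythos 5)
Your strategy matches the paper's: decompose the $(\widetilde{X}_p,d)$-geodesic at its crossings of $q^{-1}(c_1\cup c_2)$, bound the bending of the developed piecewise geodesic at each crossing, bound the lengths of the intermediate segments, and close with a local-to-global quasi-geodesic estimate in $\mathbb{H}^3$; the explicit CAT$(-1)$ link check is a nice complement to what the paper leaves implicit. However, the bending-angle claim is wrong as stated. The exterior angle at a singular crossing is not $2\pi k/p$: if $\gamma$ meets the singular lift $L_i$ at angle $\theta$ (so leaves at angle $\pi-\theta$, by the geodesic condition in the link), and the two incident sheets develop into planes meeting along $\widetilde{f}_0(L_i)$ at dihedral angle $\lambda$, then the spherical law of cosines gives $\cos\omega=-\cos^2\theta+\sin^2\theta\cos\lambda$ for the interior angle $\omega$; since $\lambda$ is a nonzero multiple of $2\pi/p$, one checks $\cos\omega\le\cos(2\pi/p)$, so the exterior angle is $\le\pi-2\pi/p$, and it equals $\pi-\lambda$ (not $\lambda$) only for perpendicular crossings. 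You are conflating the dihedral angle with the exterior angle — your $p=2$ case makes this visible, since ``exterior angle exactly $\pi$'' and ``in fact straight'' contradict each other. Your qualitative conclusion (bending bounded away from $\pi$) survives, but the formula is not a safe input to a quantitative estimate.

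More substantively, the claim that ``any $\gamma_i$ joining two singular-curve crossings traverses at least $Re^{R/4}$ regular curves'' is false: the induced path $\beta_i$ in $G(X_p)$ need not be essential, and the hypothesis $l(G(X_p))>Re^{R/4}$ controls only essential paths. Indeed $\gamma_i$ can enter a single lifted pants across one lift of $c_1$ and exit across another lift of $c_1$ on the boundary of that same pants-lift, crossing no regular curves whatsoever (the cuff $C_1$ covers $c_1$ $p$ times, so the universal cover of the one pants adjacent to $c_1$ has many boundary lifts of $c_1$); such a $\beta_i$ is a constant path. The paper handles exactly this inessential case by a separate argument: since $\gamma_i$ is a geodesic, $q(\gamma_i)$ then contains an essential arc in a single pants with both endpoints on one cuff, which in an $R$-pants has length $>R/2$. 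Thus the correct uniform lower bound on the intermediate segments is of order $R$, not $c_0Re^{R/4}$, and the local-to-global quasi-geodesic lemma has to close against that much weaker bound by taking $R$ large relative to $p$ — the ``astronomical'' cushion you are counting on is not actually available.
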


\begin{proof}
For any two points $x,y\in \widetilde{X}_p$, the shortest path $\alpha$ connecting $x$ and $y$ is a piecewise geodesic. Let $L_1,L_2,\cdots,L_m$ be consecutive geodesics in $q^{-1}(c_1\cup c_2)$ intersecting with $\alpha$, and let $\alpha_i$ be the segment of $\alpha$ between $L_i$ and $L_{i+1}$. Give an arbitrary orientation for each $L_i$, then the angle between $\alpha_{i-1}, L_i$ and the angle between $\alpha_i, L_i$ sum to $\pi$. Since $\bold{l}(c_k)=\frac{R+2\pi i}{p}$, the angle between $\widetilde{f}_0(\alpha_{i-1})$ and $\widetilde{f}_0(\alpha_i)$ in $\mathbb{H}^3$ is greater or equal $\frac{2\pi}{p}$.

For any $i\in\{1,\cdots,m-1\}$, $\alpha_i$ lies in a component of $\widetilde{X}_p\setminus q^{-1}(c_1\cup c_2)$ and connects two components of $q^{-1}(c_1\cup c_2)$. So $q(\alpha_i)$ is a homotopic nontrivial path in $X_p$ with end points in $c_1\cup c_2$, and it induces a path $\beta_i$ in the graph $G(X_p)$ with end points in $\{v_1,v_2\}$.

If $\beta_i$ is an essential path in $G(X_p)$, since $l(G(X_p))>Re^{\frac{R}{4}}$, the combinatorial length of $\beta_i$ is greater than $Re^{\frac{R}{4}}$. Since the distance between two different cuffs in the pair of pants with cuff length $R$ is roughly $2e^{-\frac{R}{4}}$, the length of $\alpha_i$ is greater than $R$. If $\beta_i$ is an inessential path in $G(X_p)$, then it contains a segment $e\bar{e}$ for some oriented edge $e$ of $G(X_p)$, or $\beta_i$ is just a point. Since $\alpha_i$ is a geodesic in $\widetilde{X}_p$, it contains a segment which is an essential path in the pair of pants with end points lying on the same cuff. Since the pants have cuff length $R$, such a path has length greater than $R/2$. So in this case, $\alpha_i$ has length greater than $R/2$. As a summary, for any $i\in\{1,\cdots,m-1\}$, $\alpha_i$ has length greater than $R/2$.

Now the proof reduces to an elementary exercise in hyperbolic geometry. Let $\alpha$ be a piecewise geodesic in $\mathbb{H}^3$ consists of geodesic segments $\alpha_0,\cdots,\alpha_m$, and let the length of $\alpha_i$ be $l_i$. If $l_i\geq R/2$ for each $i\in\{1,\cdots,m-1\}$, and if the angle between $\alpha_{i-1}$ and $\alpha_i$ is greater or equal to $\frac{2\pi}{p}$ for $i\in\{1,\cdots,m\}$. Then for large enough $R$ (depending on $p$), the distance between the end points of $\alpha$ in $\mathbb{H}^3$ is greater than $\frac{1}{2}\sum_{i=0}^m l_i-\frac{R}{4}$.

Since $d(x,y)=\sum_{i=0}^m l(\alpha_i)$, we have $$d_{\mathbb{H}^3}(\widetilde{f}_0(x),\widetilde{f}_0(y))\leq \sum_{i=0}^m d_{\mathbb{H}^3}(\widetilde{f}_0(x_i),\widetilde{f}_0(x_{i+1}))= \sum_{i=0}^m l(\alpha_i)= d(x,y)$$ and
$$d_{\mathbb{H}^3}(\widetilde{f}_0(x),\widetilde{f}_0(y))\geq \frac{1}{2}\sum_{i=0}^m d_{\mathbb{H}^3}(\widetilde{f}_0(x_i),\widetilde{f}_0(x_{i+1}))-\frac{R}{4}=\frac{1}{2} \sum_{i=0}^m l(\alpha_i)-\frac{R}{4}=\frac{1}{2}d(x,y)-\frac{R}{4}.$$ So $\widetilde{f}_0$ is a quasi-isometry, and $\rho$ is injective. The above inequality also implies that $\widetilde{f}_0(x)\ne \widetilde{f}_0(y)$ if $\alpha$ intersects with at least two components of $q^{-1}(c_1\cup c_2)$, and the injectivity property obviously holds for the remaining cases.
\end{proof}

We will first prove Theorem \ref{main} for finite cyclic abelian groups. As we mentioned in the introduction, we will give two proofs here. The first proof will only prove a weaker statement: the finite cyclic group embeds into the virtual homology of any closed hyperbolic $3$-manifold, but this proof is more geometric flavor. The second proof proves the original statement about virtual direct summand.

\begin{prop}\label{cyclic1}
For any finite cyclic abelian group $\mathbb{Z}_n$, and any closed hyperbolic $3$-manifold $M$, $M$ admits a finite cover $N$, such that $\mathbb{Z}_n$ embeds into $Tor(H_1(N;\mathbb{Z}))$.
\end{prop}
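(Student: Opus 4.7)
The plan is to follow Strategy I as outlined in the introduction. Given $n$, I would first choose the integer $p \geq 2$ so that $\sigma(p) = n$, where $\sigma(p) = p$ for $p$ odd and $\sigma(p) = p/2$ for $p$ even; explicitly, take $p = n$ if $n$ is odd and $p = 2n$ if $n$ is even. Then I apply Proposition \ref{mappedcomplex} to build an immersed $2$-complex $f : X_p \looparrowright M$ satisfying the four hypotheses of Theorem \ref{injectivity}, and invoke Theorem \ref{injectivity} to conclude that $f_* : \pi_1(X_p) \to \pi_1(M)$ is injective with convex-cocompact image in $\mathrm{Isom}_+(\mathbb{H}^3)$.

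Next, I would analyze the cover $\widetilde{M} \to M$ corresponding to $f_*(\pi_1(X_p))$. Convex-cocompactness makes $\widetilde{M}$ geometrically finite without cusps, so it admits a compact core $\hat{M}$ with incompressible boundary that carries $f_*(\pi_1(X_p))$ and is homotopy equivalent to $X_p$. A direct homology computation (using the two-circle-quotient description of $X_p$) shows that $H_1(X_p;\mathbb{Z}) \cong \mathbb{Z}^{2g+1} \oplus \mathbb{Z}_p$, with the torsion class represented by $\alpha := [c_1] - [c_2]$. I would then verify the carrying statement announced in the introduction: the only elements in the cyclic group $\langle \alpha \rangle \subset H_1(\hat{M};\mathbb{Z})$ representable by classes in $H_1(\partial \hat{M};\mathbb{Z})$ are $0$ (and $\tfrac{p}{2}\alpha$ when $p$ is even). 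This follows from tracing what boundary cycles look like on $X_p$, using that $c_1, c_2$ each map to $\gamma'$ and the $\frac{2\pi}{p}$-rotation quotient identifies their images with multiplicity.

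At this point I invoke Agol--Wise to say $\pi_1(M)$ is LERF, and apply Scott's topological criterion (\cite{Sc}) to the convex-cocompact subgroup $f_*(\pi_1(X_p))$: there is an intermediate finite cover $N \to M$ factoring through $\widetilde{M}$ into which $\hat{M}$ embeds as an incompressible submanifold. Let $\hat{M}' = \overline{N \setminus \hat{M}}$ be the complementary codimension-$0$ submanifold. The Mayer--Vietoris sequence for the decomposition $N = \hat{M} \cup_{\partial \hat{M}} \hat{M}'$ in $\mathbb{Z}$-coefficients, combined with the fact that at most the index-$\sigma(p)$ subgroup of $\langle \alpha \rangle$ can vanish in $H_1(\hat{M}';\mathbb{Z})$ (because such a bounding class would have to come from $H_1(\partial\hat{M};\mathbb{Z})$), produces a class of order exactly $\sigma(p) = n$ in $H_1(N;\mathbb{Z})$. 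Since $\alpha$ maps to zero in $H_1(\widetilde{M};\mathbb{Z}) = H_1(\hat{M};\mathbb{Z})$ rationally (it is torsion) and $N$ has finite index, the image of $\alpha$ in $H_1(N;\mathbb{Z})$ remains torsion, giving the required embedding $\mathbb{Z}_n \hookrightarrow \mathrm{Tor}(H_1(N;\mathbb{Z}))$.

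The main obstacle I anticipate is the precise carrying lemma for $\langle \alpha \rangle$ by $H_1(\partial\hat{M};\mathbb{Z})$; this requires a careful cycle-level analysis on $X_p$ to see exactly why the image of $\alpha$ in $H_1(\partial \hat{M};\mathbb{Z})$ has order $\sigma(p)$ rather than $p$. Once this is in hand, the Mayer--Vietoris extraction of the torsion class in $H_1(N;\mathbb{Z})$ is standard, and the rest is a routine application of LERF via Scott's criterion.
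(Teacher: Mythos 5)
Your proposal follows the paper's Strategy I essentially verbatim, and the overall architecture (build $X_p$, get $\pi_1$-injectivity and convex-cocompactness from Theorem \ref{injectivity}, LERF plus Scott's criterion to embed the core in a finite cover, Mayer--Vietoris to extract torsion) is exactly right. The one place where you underestimate the work needed is the step you flag yourself as the "main obstacle": you assert the compact core $\hat{M}$ is homotopy equivalent to $X_p$ and propose to determine the carrying statement for $\langle\alpha\rangle$ by "tracing boundary cycles on $X_p$." But $X_p$ is a $2$-complex without boundary in the $3$-manifold sense, and homotopy equivalence does not determine $\partial\hat{M}$ or the map $H_1(\partial\hat{M};\mathbb{Z})\to H_1(\hat{M};\mathbb{Z})$; you need the homeomorphism type of $\hat{M}$ as a compact $3$-manifold.

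The paper supplies precisely this. First it observes that the family $\{\rho_\tau\}_{\tau\in\mathbb{D}(0,1)}$ of viable representations interpolating between $\rho_1=f_*$ and the piecewise-fuchsian model $\rho_0$ all lie in $int(AH(\pi_1(X_p)))$ by Theorem \ref{injectivity}, so $\mathbb{H}^3/f_*(\pi_1(X_p))$ is homeomorphic to $\mathbb{H}^3/\rho_0(\pi_1(X_p))$. Then, in the $\rho_0$-quotient, Lemma \ref{qi} gives an embedding $f_0:X_p\hookrightarrow\mathbb{H}^3/\rho_0(\pi_1(X_p))$ with $\hat{M}=N(f_0(X_p))$ a compact core, and one can explicitly identify $\hat{M}\cong V_1\cup_{\phi_1}(S'\times[-1,1])\cup_{\phi_2}V_2$ as a book of $I$-bundles with $V_i$ solid tori glued along $(p,1)$-curves. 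From that explicit model one reads off that the image of $i_*:H_1(\partial\hat{M};\mathbb{Z})\to H_1(\hat{M};\mathbb{Z})\cong\mathbb{Z}^{2g}\oplus\mathbb{Z}\oplus\mathbb{Z}_p$ is $\mathbb{Z}^{2g}+\mathbb{Z}[pc_1]+\mathbb{Z}([c_1]+[c_2])$, which is what makes $\mathbb{Z}_p\cap i_*(H_1(\partial\hat{M}))=\{0,\frac{p}{2}([c_1]-[c_2])\}$. Without the solid-torus-filling description it is not clear how to compute this. Your choice $p=n$ for odd $n$ versus the paper's uniform $p=2n$ is a harmless variant (cf.\ Remark \ref{odd}).
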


\begin{proof}
Let $p=2n$, then Proposition \ref{mappedcomplex} gives an immersed $2$-complex $f:X_p \looparrowright M$ with a pants decomposition $(X_p,\mathcal{C}',\{C_1,C_2\})$ such that $f_*:\pi_1(X_p)\rightarrow \pi_1(M)$ satisfies the conditions in Theorem \ref{injectivity}.

By Lemma \ref{qi}, $\rho_0(\pi_1(X_p))$ is a convex cocompact Kleinian group, so $\rho_0$ lies in \\
$int(AH(\pi_1(X_p)))$. Here $$AH(\pi_1(X_p))=\{\rho:\pi_1(X_p)\rightarrow PSL_2(\mathbb{C})|\ \rho \text{\ is\ a\ discrete,\ faithful \ representation} \}/\sim,$$
and the relation is given by conjugations.

Since the map $f:X_p \looparrowright M$ induces a viable representation $f_*:\pi_1(X_p)\rightarrow \pi_1(M)\subset PSL_2(\mathbb{C})$ satisfying the assumption of Theorem \ref{injectivity}, $f_*$ is $\pi_1$-injective.

We have pointed out that $f_*$ lies in a continuous family of viable representations $\rho_{\tau}:\pi_1(X_p)\rightarrow PSL_2(\mathbb{C})$ for $\tau\in \mathbb{D}(0,1)$, with $\rho_1=f_*$. By Theorem \ref{injectivity}, $\{\rho_{\tau}(\pi_1(X_p))\}_{\tau \in \mathbb{D}(0,1)}$ is a continuous family of convex cocompact Kleinian groups. So $f_*$ and $\rho_0$ lie in the same component of $int(AH(\pi_1(X_p)))$, and $\mathbb{H}^3/f_*(\pi_1(X_p))$ is homeomorphic to $\mathbb{H}^3/\rho_0(\pi_1(X_p))$.

Let $f_0:X_p\rightarrow \mathbb{H}^3/\rho_0(\pi_1(X_p))$ be the map induced by $\widetilde{f_0}$. Then Lemma \ref{qi} implies that $f_0$ is an embedding, and a neighborhood $\hat{M}=N(f_0(X_p))$ is a compact core of $\mathbb{H}^3/\rho_0(\pi_1(X_p))$. It is easy to figure out the topological type of $\hat{M}$.

Recall that $S'$ is an orientable surface with two oriented boundary components $C_1,C_2$ with $[C_1]-[C_2]=0\in H_1(S';\mathbb{Z})$, and $X_p$ is the quotient of $S'$. Let $V$ be the oriented solid torus and $\alpha\subset \partial V$ be the oriented $(p,1)$-curve, then $\alpha$ has a neighborhood $\alpha\times[-1,1]\subset \partial V$. Take two copies of $(V,\alpha \times [-1,1])$, and denote them by $(V_1,\alpha_1 \times [-1,1])$ and $(V_2,\alpha_2 \times [-1,1])$ respectively. Let $\psi_1: C_1\rightarrow \alpha_1$ and $\psi_2:C_2\rightarrow \alpha_2$ be two orientation preserving homeomorphisms. Let $\phi_1: C_1\times[-1,1] \rightarrow \alpha_1\times [-1,1]$ equals $\psi_1\times id$, and $\phi_2: C_2\times[-1,1] \rightarrow \alpha_2\times [-1,1]$ equals $\psi_2\times (-id)$. Then $\hat{M}=N(f_0(X_p))$ is homeomorphic to $V_1\cup_{\phi_1} S'\times[-1,1]\cup_{\phi_2} V_2$ and the boundary of $\hat{M}$ is incompressible. Such an $\hat{M}$ is called a "book of $I$-bundles" by Culler and Shalen in \cite{CS}.

Since $\mathbb{H}^3/f_*(\pi_1(X_p))$ is homeomorphic to $\mathbb{H}^3/\rho_0(\pi_1(X_p))$, $\mathbb{H}^3/f_*(\pi_1(X_p))$ has a submanifold $\hat{M}'$ homeomorphic to $\hat{M}$. It is known that fundamental groups of hyperbolic $3$-manifolds are LERF (\cite{Ag},\cite{Wi}), and by Scott's criterion of LERF (\cite{Sc}), $M$ admits an intermediate cover $N\rightarrow M$ of $\mathbb{H}^3/f_*(\pi_1(X_p))\rightarrow M$ such that $\hat{M}'$ projects to $N$ by homeomorphism.

By the description of the topological type of $\hat{M}'$, $\hat{M}'$ has only one boundary component, and $H_1(\hat{M}';\mathbb{Z})=\mathbb{Z}^{2g}\oplus \mathbb{Z} \oplus \mathbb{Z}_p$ for $g=g(S)$. The $\mathbb{Z}$-component is generated by $[c_1]$ and the $\mathbb{Z}_p$-component is generated by $[c_1]-[c_2]$. Since the image of $i_*:H_1(\partial \hat{M}';\mathbb{Z})\rightarrow H_1(\hat{M}';\mathbb{Z})$ is $\mathbb{Z}^{2g}+\mathbb{Z}[pc_1]+\mathbb{Z}[[c_1]+[c_2]]$, it is easy to show that $\mathbb{Z}_p\cap i_*(H_1(\partial \hat{M}';\mathbb{Z}))=\{0,\frac{p}{2}([c_1]-[c_2])\}$.

Then an M-V sequence argument shows that $\mathbb{Z}_{p/2}=\mathbb{Z}_n$ embeds into $H_1(N;\mathbb{Z})$.
\end{proof}

Now let's prove the statement for vitual direct summand.

\begin{prop}\label{cyclic2}
For any finite cyclic abelian group $\mathbb{Z}_n$, and any closed hyperbolic $3$-manifold $M$, $M$ admits a finite cover $N$, such that $\mathbb{Z}_n$ is a direct summand of $Tor(H_1(N;\mathbb{Z}))$.
\end{prop}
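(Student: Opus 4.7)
The plan is to carry out Strategy II as outlined in the introduction. Assuming $n\geq 2$ (the case $n=1$ is trivial), I would first apply Proposition \ref{mappedcomplex} with $p=n$ to produce an immersed $2$-complex $f:X_n\looparrowright M$ satisfying the four hypotheses of Theorem \ref{injectivity}. That theorem then guarantees that $f_*:\pi_1(X_n)\to \pi_1(M)$ is injective and that $f_*(\pi_1(X_n))$ is convex-cocompact in $\mathrm{Isom}_+(\mathbb{H}^3)$; in particular it is a quasi-convex subgroup of the word-hyperbolic group $\pi_1(M)$.

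Next, since $\pi_1(M)$ is virtually compact special by Agol and Wise, and quasi-convex subgroups of virtually compact special word-hyperbolic groups are virtual retracts by Haglund--Wise \cite{HW}, there exist a finite-index subgroup $\pi_1(N)\leq \pi_1(M)$ containing $f_*(\pi_1(X_n))$ and a retraction $r:\pi_1(N)\to \pi_1(X_n)$ satisfying $r\circ i=\mathrm{id}_{\pi_1(X_n)}$, where $i$ denotes the inclusion. Let $N\to M$ be the corresponding finite cover. Passing to abelianizations, the composition
\[
H_1(X_n;\mathbb{Z})\xrightarrow{i_*} H_1(N;\mathbb{Z})\xrightarrow{r_*} H_1(X_n;\mathbb{Z})
\]
is the identity, so $H_1(X_n;\mathbb{Z})$ is a direct summand of $H_1(N;\mathbb{Z})$.

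It remains to compute $H_1(X_n;\mathbb{Z})$. Recall $X_n$ is the quotient of a genus-$g$ surface $S'$ with two oriented boundary circles $C_1,C_2$ satisfying $[C_1]-[C_2]=0$ in $H_1(S';\mathbb{Z})$, obtained by identifying each $C_i$ with its image under a rotation of order $n$; this produces an embedded circle $c_i\subset X_n$ with a degree-$n$ covering $C_i\to c_i$. A van Kampen computation, using that the class of $C_i\subset S'$ is represented by $c_i^n$ in $\pi_1(X_n)$, yields the presentation
\[
\pi_1(X_n)=\Bigl\langle a_1,b_1,\dots,a_g,b_g,\,c_1,c_2 \,\Bigm|\, \textstyle\prod_{k=1}^{g}[a_k,b_k]\cdot c_1^{\,n}c_2^{-n}\Bigr\rangle.
\]
Abelianizing, the single relation becomes $n([c_1]-[c_2])=0$, so $H_1(X_n;\mathbb{Z})\cong \mathbb{Z}^{2g+1}\oplus \mathbb{Z}_n$, with the $\mathbb{Z}_n$-summand generated by $[c_1]-[c_2]$. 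Combining with the retraction above, $\mathbb{Z}_n$ is a direct summand of $H_1(N;\mathbb{Z})$, and being finite, of $\mathrm{Tor}(H_1(N;\mathbb{Z}))$.

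The only substantive obstacle in this plan is Step 1, namely the injectivity and convex-cocompactness furnished by Theorem \ref{injectivity}; once that technical input is granted, the proof reduces to invoking the Agol--Wise--Haglund--Wise machinery and performing the elementary presentation calculation above. A minor subtlety is that Haglund--Wise yields a retraction onto a finite-index subgroup of $f_*(\pi_1(X_n))$ when one first passes to a compact special finite cover of $M$; but since we may replace $X_n$ by the corresponding finite cover (in which the class $[c_1]-[c_2]$, possibly after summing over preimage components, still represents an element of order $n$), or equivalently pull the retraction up to $\pi_1(X_n)$ itself using that any quasi-convex subgroup of a virtually compact special group is virtually a retract of the whole group, we obtain the conclusion as stated.
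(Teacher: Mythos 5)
Your overall plan follows the paper's Strategy II, and the core ingredients are the same: use Proposition~\ref{mappedcomplex} with $p=n$ to build $f:X_n\looparrowright M$, apply Theorem~\ref{injectivity} to get quasi-convexity, invoke the virtual-retract property of quasiconvex subgroups, and compute $H_1(X_n;\mathbb{Z})\cong\mathbb{Z}^{2g+1}\oplus\mathbb{Z}_n$. Your van Kampen computation of $\pi_1(X_n)$ and its abelianization is correct and is slightly more explicit than what the paper records.

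However, there is one genuine issue in how you handle the passage to a special cover, and your two proposed patches do not close it. The theorem of Haglund--Wise that you need gives virtual retracts of quasiconvex subgroups of a group that \emph{is} compact special, not one that is merely \emph{virtually} special; you cite \cite{HW} for the stronger statement in your second paragraph, and then acknowledge the discrepancy at the end. Your first patch (replace $X_n$ by a finite cover and argue $[c_1]-[c_2]$ still has order $n$ after summing over preimages) is not justified: passing to a finite cover of $X_n$ can in general change the torsion in $H_1$ in uncontrolled ways, and a transfer argument does not automatically recover a $\mathbb{Z}_n$ \emph{direct summand}. Your second patch (the assertion that quasiconvex subgroups of virtually compact special hyperbolic groups are virtual retracts of the whole group) may well be true, but it is not the statement in \cite{HW}, and you supply no argument for it. The paper avoids this entirely by reversing the order of operations: it first replaces $M$ by a finite cover whose fundamental group is the group of a compact special cube complex, and only \emph{then} runs Proposition~\ref{mappedcomplex} inside that cover. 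Since Proposition~\ref{mappedcomplex} applies to any closed hyperbolic $3$-manifold, this costs nothing, and then Haglund--Wise applies verbatim to $f_*(\pi_1(X_n))\leq\pi_1(M)$ with $\pi_1(M)$ special. You should reorganize your proof in that order; otherwise the argument is the same.
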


\begin{proof}
Since $M$ is a hyperbolic $3$-manifold, $\pi_1(M)$ is virtually special compact by \cite{Ag} and \cite{Wi}. So we can suppose $\pi_1(M)$ is already the fundamental group of a compact special cube complex.

Let $f:X_n\looparrowright M$ be the $\pi_1$-injective immersion we have constructed in Proposition \ref{cyclic1}. Since $f_*(\pi_1(X_n))$ is a convex-cocompact subgroup of $PSL_2(\mathbb{C})$, it is a quasi-convex subgroup of the hyperbolic group $\pi_1(M)$.

Since $f_*(\pi_1(X_n))$ is a quasi-convex subgroup of the special group $\pi_1(M)$, $f_*(\pi_1(X_n))$ is a virtual retract of $\pi_1(M)$ (\cite{HW}), i.e. $M$ admits a finite cover $N$, such that the following conditions hold.

1) $f_*(\pi_1(X_n))\subset \pi_1(N)$;

2) There exists a retraction homomorphism $r:\pi_1(N)\rightarrow \pi_1(X_n)$ such that $r\circ f_*:\pi_1(X_n)\rightarrow \pi_1(X_n)$ is identity.

So we have the induced maps on homology: $$H_1(X_n;\mathbb{Z})\xrightarrow{f_*} H_1(N;\mathbb{Z})\xrightarrow{i_*} H_1(X_n;\mathbb{Z}).$$
Since the composition is the identity map and $H_1(X_n;\mathbb{Z})=\mathbb{Z}^{2g+1} \oplus \mathbb{Z}_n$, we know that $\mathbb{Z}^{2g+1} \oplus \mathbb{Z}_n$ is a direct summand of $H_1(N;\mathbb{Z})$. In particular, $\mathbb{Z}_n$ is a direct summand of $Tor(H_1(N;\mathbb{Z}))$.
\end{proof}

\begin{rem}\label{odd}
If $n$ is an odd number, the proof of Proposition \ref{cyclic1} also shows that $\mathbb{Z}_n$ is virtually a direct summand of the homology, by taking $p=n$.
\end{rem}

\subsection{Finite Abelian Subgroups in Virtual Homology}

We will finish the proof of Theorem \ref{main} in this subsection. As in the finite cyclic group case, we will also give two proofs. One for virtual embedding, and the other one for virtual direct summand.

\begin{prop}\label{general1}
For any finite abelian group $A$, and any closed hyperbolic $3$-manifold $M$, $M$ admits a finite cover $N$, such that $A$ embeds into $Tor(H_1(N;\mathbb{Z}))$.
\end{prop}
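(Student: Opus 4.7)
The plan is to extend the cyclic case (Proposition \ref{cyclic1}) by additively combining the immersed $2$-complexes coming from the cyclic summands of $A$. Decompose $A \cong \mathbb{Z}_{n_1} \oplus \cdots \oplus \mathbb{Z}_{n_k}$ as a direct sum of finite cyclic groups, set $p_i = 2n_i$, and use Proposition \ref{mappedcomplex} together with Theorem \ref{injectivity} to produce, for each $i$, a $\pi_1$-injective immersion $f_i:X_{p_i}\looparrowright M$ whose image $G_i = (f_i)_*(\pi_1(X_{p_i}))$ is a convex cocompact subgroup of $\pi_1(M)\subset PSL_2(\mathbb{C})$. By the argument of Proposition \ref{cyclic1}, the convex core of $\mathbb{H}^3/G_i$ is a compact $3$-manifold $\hat{M}_i$ (a "book of $I$-bundles") with incompressible boundary and with a distinguished $\mathbb{Z}_{p_i}$ summand generated by $[c_1^{(i)}]-[c_2^{(i)}]$ in $H_1(\hat{M}_i;\mathbb{Z})$.

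Next, I combine the $G_i$ into a single convex cocompact subgroup of $\pi_1(M)$ via iterated Klein--Maskit combination. Since each limit set $\Lambda(G_i) \subsetneq S^2_\infty$ is a proper closed subset, I can choose a loxodromic element $g\in \pi_1(M)$ whose pair of fixed points in $S^2_\infty$ avoids $\bigcup_i \Lambda(G_i)$. For $n$ sufficiently large, repeated application of the first Klein--Maskit combination theorem yields a convex cocompact subgroup $H\subset \pi_1(M)$ isomorphic to the free product $G_1 * G_2 * \cdots * G_k$, whose convex core $\hat{M}$ is homotopy equivalent to the wedge (or, more precisely, to $\hat{M}_1,\dots,\hat{M}_k$ joined by arcs). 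This reduces the problem to the cyclic case $k=1$ plus a straightforward induction on the number of factors, paralleling the $\mathbb{Z}_{\sigma(p_1)}\oplus \mathbb{Z}_{\sigma(p_2)}$ sketch in Strategy I of the introduction.

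Finally, I apply LERF for $\pi_1(M)$ (\cite{Ag},\cite{Wi}) and Scott's criterion (\cite{Sc}) to obtain an intermediate finite cover $N\to M$ of $\mathbb{H}^3/H \to M$ such that $\hat{M}$ embeds in $N$. Because $\hat{M}$ is (up to homotopy) the wedge of the $\hat{M}_i$, a Mayer--Vietoris argument applied exactly as in Proposition \ref{cyclic1}, carried out once for each $\hat{M}_i$ individually, shows that each order-$n_i$ class $\frac{p_i}{2}([c_1^{(i)}]-[c_2^{(i)}])$ survives in $H_1(N;\mathbb{Z})$, and together they generate a subgroup isomorphic to $\bigoplus_i \mathbb{Z}_{n_i} = A$ inside $\mathrm{Tor}(H_1(N;\mathbb{Z}))$.

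The main technical obstacle is ensuring that the iterated Klein--Maskit construction yields a subgroup $H$ that (a) is genuinely convex cocompact inside $\pi_1(M)$, and (b) has compact core whose homology splits as a wedge so that the $\mathbb{Z}_{n_i}$ classes contributed by distinct $\hat{M}_i$ remain independent and incompressible into the boundary. Once the combined subgroup is handled, the LERF-plus-Mayer--Vietoris portion is a direct routine extension of the single-factor case, and induction on $k$ closes the argument.
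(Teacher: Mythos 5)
Your proposal follows essentially the same route as the paper's proof of Proposition \ref{general1}: decompose $A$ into cyclic factors, build convex cocompact subgroups $G_i$ via Proposition \ref{mappedcomplex} and Theorem \ref{injectivity}, use a loxodromic $g$ whose fixed points avoid the limit sets together with Klein--Maskit combination to form the free product $G_1 * g^n G_2 g^{-n}$ (iterated, with induction on $k$), identify the convex core as a boundary connected sum of the $\hat{M}_i$, and then invoke LERF plus Scott's criterion and a Mayer--Vietoris computation. One small correction to the final step: the class $\tfrac{p_i}{2}\bigl([c_1^{(i)}]-[c_2^{(i)}]\bigr)$ has order $2$, not $n_i$; it is exactly the part of the cyclic summand $\langle[c_1^{(i)}]-[c_2^{(i)}]\rangle\cong\mathbb{Z}_{p_i}$ that the boundary of $\hat{M}_i$ might carry, so what the Mayer--Vietoris argument shows is that the \emph{image} of $\langle[c_1^{(i)}]-[c_2^{(i)}]\rangle$ in $H_1(N;\mathbb{Z})$ has order at least $p_i/2=n_i$, yielding the desired $\mathbb{Z}_{n_i}$; with that rephrasing your argument matches the paper's.
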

\begin{proof}

We will proof the statement by induction on the number of generators of the finite abelian group $A$, and the proof of one-generator case has been done in Proposition \ref{cyclic1}.

For $A=\mathbb{Z}_{n_1}\oplus \mathbb{Z}_{n_2}$, take $p_k=2n_k$ for $k=1,2$. Then Proposition \ref{mappedcomplex} and Theorem \ref{injectivity} provide us two immersed $\pi_1$-injective $2$-complexes $f_1:X_{p_1}\looparrowright M$ and $f_2:X_{p_2}\looparrowright M$.

Let $G_1=(f_1)_*(\pi_1(X_{p_1}))$ and $G_2=(f_2)_*(\pi_1(X_{p_2}))$ be the two convex cocompact subgroups of $\pi_1(M)$ given by Theorem \ref{injectivity}. Let $M_k$ be the compact $3$-manifold whose interior is homeomorphic to $\mathbb{H}^3/G_k$ for $k=1,2$.

Take $g\in \pi_1(M^3)$ such that both of the two limit points of $g$ on $S^2_{\infty}$ do not lie in the limit sets $\Lambda(G_1)$ and $\Lambda(G_2)$. Then for a large enough positive integer $n$, by the Kleinian combination Theorem, $j:G_1*(g^nG_2g^{-n})\rightarrow \pi_1(M^3)$ is an embedding, and $j(G_1*(g^nG_2g^{-n}))$ is a convex cocompact subgroup of $\pi_1(M^3)$. So $\mathbb{H}^3/j(G_1*(g^nG_2g^{-n}))$ is homeomorphic to the interior of the boundary connected sum of $M_1$ and $M_2$.

Since hyperbolic $3$-manifold groups are LERF, by running the argument in the proof of Proposition \ref{cyclic1}, we can find a finite cover $N$ of $M$, such that $\mathbb{Z}_{n_1}\oplus \mathbb{Z}_{n_2}$ embeds into $H_1(N;\mathbb{Z})$.

For finite abelian groups with more generators, the result can be shown by induction as the two-generator case.
\end{proof}

Now we give the proof of Theorem \ref{main}.
\begin{proof}
We will still only show the theorem for $A=\mathbb{Z}_{n_1}\oplus\mathbb{Z}_{n_2}$ case, and the proof for general finite abelian groups follows by induction.

As in the proof of Proposition \ref{cyclic2}, we suppose $\pi_1(M)$ is already a special group. The proof of Proposition \ref{general1} provides us a quasi-convex subgroup of $\pi_1(M)$ which is isomorphic to $\pi_1(X_{n_1})*\pi_1(X_{n_2})$. Since $\pi_1(X_{n_1})*\pi_1(X_{n_2})$ is a virtual retract of $\pi_1(M)$ and $Tor\left(H_1(\pi_1(X_{n_1})*\pi_1(X_{n_2});\mathbb{Z})\right)\cong \mathbb{Z}_{n_1}\oplus \mathbb{Z}_{n_2}$, we know that $\mathbb{Z}_{n_1}\oplus \mathbb{Z}_{n_2}$ is a direct summand of $Tor(H_1(N;\mathbb{Z}))$ for some finite cover $N$ of $M$.
\end{proof}

\begin{rem}
As in Remark \ref{odd}, if the finite abelian group $A$ has odd order, the proof of Proposition \ref{general1} also shows that $A$ is a vitual direct summand.
\end{rem}

\section{The $\pi_1$-injectivily Property of Immersed $2$-Complexes}
This section is devoted to prove Theorem \ref{injectivity}. Actually, we will show a $\pi_1$-injectivity result for more general immersed $2$-complexes in closed hyperbolic $3$-manifolds, with good pants as building blocks.

For the topological pair of pants $\Pi_0$, let $\partial_k \Pi_0, k=1,2,3$ denote the three boundary components of $\Pi_0$. Suppose we have finitely many pair of pants $\mathcal{P}=\{\Pi_i\}^m_{i=1}$ and finitely many circles $\mathcal{C}=\{C_j\}_{j=1}^n$. By using these building blocks and some additional data, we will construct a $2$-complex with a "pants decomposition".

\begin{defn}\label{complex}
For each pair $(i,k),i\in\{1,\cdots,m\},k\in\{1,2,3\}$, suppose it is associated with a unique $j\in\{1,\cdots,n\}$ and a homeomorphism $f_{ik}:\partial_k \Pi_i \rightarrow C_j$. For each $C\in \mathcal{C}$, suppose it is associated with a positive integer $d_C>0$.

Let $(\cup_{i=1}^m\Pi_i)\cup(\cup_{j=1}^nC_j)\rightarrow X'$ be the quotient space quotient by the relation given by $\{\phi_{ik}\}$. Let $X$ be a further quotient space of $X'$ quotient by the $\frac{2\pi}{d_C}$-rotation on each $C\in \mathcal{C}$, and let $q:(\cup_{i=1}^m\Pi_i)\cup(\cup_{j=1}^nC_j)\rightarrow X$ be the quotient map giving $X$.
\end{defn}

For each circle $C\in \mathcal{C}$, let $D_C=d_C\cdot \#\{(k,i)|\ \partial_k \Pi_i$ is mapped to $C\}$. For any point $x\in p(C)\subset X$, a neighborhood of $x$ in $X$ is homeomorphic to the quotient space of the union of $D_C$ half-discs, by identifying their diameters together. So $D_C$ measures the the local singularity near $p(C)$.

Let $\mathcal{C}_1=\{C\in \mathcal{C}|D_C=2,d_C=1\}$ and $\mathcal{C}_2=\{C\in \mathcal{C}|D_C>2 \ \text{or} \ d_C>1\}$. If $D_C>1$ for each $C \in \mathcal{C}$ ($X$ does not have "boundary"), we say $X$ is a {\it $2$-complex with a pants decomposition $(X,\mathcal{C}_1,\mathcal{C}_2)$}.

We will call the curves in $\mathcal{C}_1$ {\it regular curves}, since each of these curves has a neighborhood in $X$ which is homeomorphic to the annulus. Curves in $\mathcal{C}_2$ are called {\it singular curves}. We can also define a graph $G(X)$ from $X$ as in step IV of our construction of $X_p$ in Section 3.1. Here vertices are given by pants in $X$, and edges are given by regular curves. In $G(X)$, all the vertices are trivalent except those vertices corresponding with pants adjacent to singular curves. $l(G(X))$ can also be defined similarly, by considering the shortest essential path in $G(X)$ with end points corresponding with pants adjacent to singular curves. Let $S(X)=\max{\{D_{C_j}\}}$, which measures the maximal singularity of $X$.

The following definition of viable representation for more general $2$-complexes is almost the same with Definition \ref{viable}.
\begin{defn}
A representation $\rho:\pi_1(X) \rightarrow PSL_2(\mathbb{C})$ is called {\it viable} if the following conditions hold.
\begin{enumerate}
\item For each $C\in \mathcal{C}_1\cup \mathcal{C}_1$, let $g_C$ be a generator of $\pi_1(C)$, then $\rho(g_C)$ is a hyperbolic element in $PSL_2(\mathbb{C})$.
\item For each pants $\Pi$ in $X$, $\rho|_{\pi_1(\Pi)}$ is an injective map, and $\rho(\pi_1(\Pi))$ is a discrete subgroup of $PSL_2(\mathbb{C})$.
\item For any two pants $\Pi, \Pi'$ sharing some $C\in \mathcal{C}_1\cup \mathcal{C}_1$, $\bold{hl}_{\Pi}(C)=\bold{hl}_{\Pi'}(C)$ holds.
\end{enumerate}
\end{defn}

For a singular curve $C\in \mathcal{C}_2$, let $N^1(\sqrt{C})$ be the half unit normal bundle of $C$ (under a viable representation $\rho$), then $N^1(\sqrt{C})\cong \mathbb{C}/\bold{hl}(C)\mathbb{Z}+2\pi i\mathbb{Z}$. There is a flow $\Psi_t$ on $N^1(\sqrt{C})$ along the direction of $\bold{hl}(C)$, and all the orbits of $\Psi_t$ are closed orbits. Suppose $\Pi_1,\cdots,\Pi_k$ are the pants adjacent to $C$, then $D_C=d_C\cdot k$. Let $foot_{\Pi_j}(C)\in N^1(\sqrt{C})$ be the foot of $\Pi_j$ on $C$.

Let $F'=\{foot_C(\Pi_j)+\frac{2l \pi i}{d_C}|\ j=1,\cdots,k,\ l=1,\cdots,d_C\}\subset N^1(\sqrt{C})$. If a point appears more than once in the definition of $F'$, we will count it with multiplicity. Let $\Psi(F')$ be the union of closed orbits passing through $F'$ under the flow $\Psi_t$. Let $F=\Psi(F')\cap \{ti|t\in\mathbb{R}/2\pi\mathbb{Z}\}\subset N^1(\sqrt{C})$, and we will consider $F$ as a subset of $S^1=\mathbb{R}/2\pi\mathbb{Z}$ (with multiplicity).

\begin{defn}
We say the pants $\Pi_1,\cdots,\Pi_n$ are {\it $p$-separated along $C$} if the distance between any two distinct points in $F$ is greater or equal to $\frac{2\pi}{p}$ on $S^1$.
\end{defn}

Then we can state our main technical theorem in this section.
\begin{thm}\label{injectivity1}
Fix an positive integer $p\geq 2$, there are universal constants $\hat{\epsilon}>0$ and $\hat{R}>0$ depending only on $p$, such that for any $0<\epsilon<\hat{\epsilon}$ and $R>\hat{R}>0$, the following statement holds. If $X$ is a connected $2$-complex with a pants decomposition $(X,\mathcal{C}_1,\mathcal{C}_2)$, and $\rho:\pi_1(X) \rightarrow PSL_2(\mathbb{C})$ is a viable representation such that:
\begin{enumerate}
\item $S(X)\leq p$ and the induced graph $G(X)$ satisfies $l(G(X))>Re^{\frac{R}{4}}$,
\item For any $C\in \mathcal{C}_1\cup \mathcal{C}_1$, it satisfies the first modified Kahn-Markovic condition:
$$      |\bold{hl}(C)-\frac{R}{2}|<\frac{\epsilon}{R},$$
\item For any $C\in \mathcal{C}_1$, it satisfies the second modified Kahn-Markovic condition:
$$      |s(C)-1|<\frac{\epsilon}{R^2}, $$
\item For any $C\in \mathcal{C}_2$, $|\bold{l}(q(C))-\frac{R+2k\pi i}{d_C}|<\frac{\epsilon}{d_CR}$ for some $k$ coprime with $d_C$, and all the pants adjacent to $C$ are $p$-separated along $C$.
\end{enumerate}
Then $\rho:\pi_1(X)\rightarrow PSL_2(\mathbb{C})$ is injective, and $\rho(\pi_1(X))$ is a convex-cocompact subgroup of $PSL_2(\mathbb{C})$.
\end{thm}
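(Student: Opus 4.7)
The plan is to mimic the proof of Lemma \ref{qi} in the more general setting, first for a standard model representation and then transferring to $\rho$ by a continuous deformation argument as used in Proposition \ref{cyclic1}. First I would introduce a model representation $\rho_0:\pi_1(X)\to PSL_2(\mathbb{C})$ in which the inequalities in (2), (3), (4) are replaced by the corresponding equalities: $\bold{hl}(C)=R/2$ for every cuff, $s(C)=1$ for regular curves, and $\bold{l}(q(C))=(R+2k\pi i)/d_C$ at each singular curve (with the same $k$ coprime to $d_C$). Then I would parametrize the errors (as in the $X_p$ case) and set up a continuous family $\{\rho_\tau\}_{\tau\in\mathbb{D}(0,1)}$ with $\rho_0$ this model and $\rho_1=\rho$, so that showing every $\rho_\tau$ is convex-cocompact reduces essentially to showing $\rho_0$ is, together with openness and stability of convex cocompactness in $AH(\pi_1(X))$.

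Next, for $\rho_0$, I would build the developing map $\widetilde{f}_0:\widetilde X\to\mathbb{H}^3$ by mapping each pants component of the preimage of $X\setminus q^{-1}(\mathcal{C}_2)$ onto a totally geodesic pants in $\mathbb{H}^3$ and gluing at regular curves without shearing. The key claim is that $\widetilde f_0$ is a quasi-isometric embedding with respect to the induced path metric on $\widetilde X$. To prove this I would imitate Lemma \ref{qi}: a shortest path between points is piecewise geodesic, crossing a sequence of geodesics $L_1,\dots,L_m$ that are components of $q^{-1}(\mathcal{C}_2)$. At each such crossing, the $p$-separation hypothesis combined with $k$ coprime to $d_C$ forces the bending angle to be at least $2\pi/p$. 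The intervening segments $\alpha_i$ must be of length at least $R/2$: if the corresponding path in $G(X)$ is essential this follows from $l(G(X))>Re^{R/4}$ together with the fact that cuff-to-cuff distance in a pants of cuff length $R$ is of order $e^{-R/4}$; if it is inessential, $\alpha_i$ must make an essential excursion in a pants with both endpoints on a single cuff, again of length at least $R/2$. Then the elementary hyperbolic lemma quoted at the end of the proof of Lemma \ref{qi} gives the quasi-isometric embedding, and hence injectivity of $\rho_0$ and convex cocompactness of its image.

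To pass from $\rho_0$ to $\rho$, I would observe that at regular curves the second modified Kahn-Markovic condition $|s(C)-1|<\epsilon/R^2$ perturbs the totally geodesic gluing by a shear whose size is exponentially small relative to the quasi-isometry constants; combined with the $O(\epsilon/R)$ perturbation of $\bold{hl}(C)$ from $R/2$ and $O(\epsilon/(d_C R))$ perturbation at singular cuffs, the same geodesic-crossing argument still yields the quasi-isometric embedding for every $\rho_\tau$, with uniform constants independent of $\tau$. This implies that $\{\rho_\tau\}$ is a continuous path in $AH(\pi_1(X))$ all of whose images are discrete, faithful and convex cocompact, so by openness of the convex-cocompact locus $\rho=\rho_1$ is convex cocompact, which is exactly the conclusion.

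The hard part, which I expect to occupy the bulk of Section 4, is controlling the cumulative effect of the shear and bending perturbations along very long piecewise-geodesic paths through the universal cover without losing the lower bound on effective bending at the singular curves. In particular, one must check that the perturbation at a singular curve $C\in\mathcal{C}_2$ cannot cause two of the $D_C$ feet to approach each other below the $2\pi/p$ threshold given by $p$-separation; this is where the hypothesis $S(X)\leq p$ and the exponentially small perturbation scale $\epsilon/(d_C R)$ interact, and where careful hyperbolic-geometric estimates (as in Kahn-Markovic Sections 2--3) will be needed to make the argument for $\rho_0$ robust under the full range of allowed $\tau$.
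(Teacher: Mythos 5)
Your high-level skeleton is right: set up a model representation $\rho_0$, show it gives a quasi-isometric embedding of $\widetilde X$, and then handle the bending at singular curves via the $p$-separation and the coprimality condition exactly as in Lemma~\ref{qi}. But the proposal does not actually address what you yourself flag as the hard part, namely controlling the cumulative effect of the shear and half-length perturbations along paths of combinatorial length up to $R e^{R/4}$ inside a single Kahn--Markovic ``sheet'' of $\widetilde X$. Naively summing the local errors fails: the shear error is $O(\epsilon/R^2)$ and the half-length error is $O(\epsilon/R)$, neither of which is exponentially small (your claim that the shear is ``exponentially small relative to the quasi-isometry constants'' is not correct), and over $\sim e^{R/4}$ pants these local estimates accumulate to something potentially unbounded. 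The paper's proof does not get this control from ``careful hyperbolic-geometric estimates as in Kahn--Markovic Sections 2--3''; it gets it from a qualitatively different mechanism, which is the missing key lemma in your proposal.

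Concretely, the paper first invokes the fact that a surface satisfying the modified Kahn--Markovic conditions is quasi-Fuchsian with a $(1+O(\epsilon/R))$-quasiconformal boundary conjugacy (Theorem~\ref{KMinj}), extends this to an equivariant $(1+O(\epsilon/R),0)$-quasi-isometry of $\mathbb H^3$ by McMullen's theorem (Theorem~\ref{McM}), and then compares the explicit piecewise-linear map $h$ on the $1$-complex $Y=\widetilde{\mathcal C}\cup\widetilde{\mathcal A}$ to this global quasi-isometry to conclude that $h$ is a $(1+K\epsilon/R,\,K(\epsilon+1/R)^{1/5})$-quasi-isometry (Theorem~\ref{quasi}). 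This is what tames the cumulative error along a sheet, and it is \emph{a priori}, not a sum of local estimates. On top of this the paper proves the angle-comparison estimate $|\Theta(\gamma,\alpha,\overline{xy})-\Theta(h(\gamma),h(\alpha),\overline{h(x)h(y)})|\le 1/p$ (Theorem~\ref{angle}), and it is this estimate, combined with $p$-separation, that preserves a definite bending lower bound at singular curves. Your proposal mentions neither the reduction to the quasiconformal/quasi-isometry extension nor the angle-comparison statement. Finally, the openness-of-convex-cocompactness step is logically misplaced: in the paper's argument Theorem~\ref{injectivity1} establishes convex cocompactness of $\rho$ directly (for any $\rho$ satisfying the hypotheses, uniformly), and the continuous family $\{\rho_\tau\}$ together with openness is used later, in Proposition~\ref{cyclic1}, only to identify the homeomorphism type of the quotient. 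If you could ``directly show the quasi-isometric embedding for every $\rho_\tau$ with uniform constants'' as you assert, the openness step would be superfluous; if you cannot, openness does not rescue you, because it gives no control on how far from $\rho_0$ the convex-cocompact locus extends.
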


It is obviously that Theorem \ref{injectivity} is a special case of Theorem \ref{injectivity1}, so we need only to prove Theorem \ref{injectivity1} in the remaining part of this section.

We will prove Theorem \ref{injectivity1} by showing that a partially defined map $i:\widetilde{X}\rightarrow \mathbb{H}^3$ satisfying $i\circ x=\rho(x)\circ i$ for any $x\in \pi_1(X)$ is a quasi-isometric embedding. Actually, $i$ is defined on a $1$-subcomplex of $\widetilde{X}$, and $i$ maps each $1$-cell in the $1$-subcomplex to a geodesic arc in $\mathbb{H}^3$.

For each component $W$ of $\widetilde{X}\setminus p^{-1}(\mathcal{C}_2)$, we will use the work in \cite{KM1} and some classical results in hyperbolic geometry to show that $i|_W:W\rightarrow \mathbb{H}^3$ is a quasi-isometric embedding, by comparing $i|_W$ to some quasi-isometric embedding. Then we estimate the "angle" between two adjacent components of $\widetilde{X}\setminus p^{-1}(\mathcal{C}_2)$, and give it a definite lower bound. Given these estimations, we can show the globally quasi-isometric property, as the proof of Lemma \ref{qi}.

\subsection{Piecewise Linear Map is a Quasi-isometry}\label{piecewise}

To prove Theorem \ref{injectivity1}, we need to study more detail about Kahn-Markovic surfaces under the modified condition. The following Theorem in \cite{KM1} helps us to understand the boundary behavior of the universal cover of Kahn-Markovic surfaces in $\mathbb{H}^3$.

\begin{thm}\label{KMinj}
There are universal constants $\hat{\epsilon},\hat{R},K_0>0$, such that the following statement holds. For any $\epsilon,R$ such that $0<\epsilon<\hat{\epsilon}$ and $R>\hat{R}$, suppose $(S,\mathcal{C})$ is a closed surface with a pants decomposition $\mathcal{C}$, and $f:S\looparrowright M$ is a viable map such that
$$
\left\{ \begin{array}{l}
        |\bold{hl}(C)-\frac{R}{2}|<\epsilon, \\
        |s(C)-1|<\frac{\epsilon}{R}
\end{array} \right.
$$ holds for any $C\in\mathcal{C}$. Then $\rho=f_*:\pi_1(S)\rightarrow PSL_2(\mathbb{C})$ is injective and $\rho(\pi_1(S))$ is a quasi-fuchsian group.

Moreover, suppose $S$ is endowed with the hyperbolic metric with $\bold{hl}(C)=\frac{R}{2}$ and $s(C)=1$, then $\partial \widetilde{f}:\partial \widetilde{S}=\partial\mathbb{H}^2 \rightarrow \partial \widetilde{M^3}=\partial \mathbb{H}^3$ extends to a $\pi_1(S)$-equivariant $(1+K_0\epsilon)$-quasiconformal map $\partial \widetilde{g}: \partial \mathbb{H}^3 \rightarrow \partial \mathbb{H}^3$ (by considering $\pi_1(S)\subset Isom_+(\mathbb{H}^2)\subset Isom_+(\mathbb{H}^2)$).
\end{thm}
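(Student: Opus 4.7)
The plan is to compare $\rho$ with the reference Fuchsian representation $\rho_0 \colon \pi_1(S) \to \mathrm{PSL}_2(\mathbb{R})$ coming from the hyperbolic structure on $S$ with $\bold{hl}(C)=R/2$ and $s(C)=1$ for every $C \in \mathcal{C}$, and to construct a $(\rho_0,\rho)$-equivariant quasi-conformal extension directly. The condition $s(C)\approx 1$ (which is purely real) means that the imaginary part $\mathrm{Im}(s(C))=O(\epsilon/R)$ controls the bending angle between adjacent pants in $\mathbb{H}^3$, while the smallness of $|\bold{hl}(C)-R/2|$ controls the mismatch of adjacent pants along a gluing geodesic. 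Hence the realizing pleated map should be a small perturbation of a totally geodesic Fuchsian embedding, and the entire argument reduces to propagating this local smallness to a global dilatation bound.

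First I would construct a pleated $\rho$-equivariant developing map $\Phi \colon \widetilde{S}=\mathbb{H}^2 \to \mathbb{H}^3$. By viability, each pants subgroup $\rho(\pi_1(\Pi))$ is Fuchsian, hence stabilizes a totally geodesic plane $H_\Pi \subset \mathbb{H}^3$; I send each lifted pants isometrically into the corresponding $H_\Pi$ and glue across lifts of $C$ by the hyperbolic isometry prescribed by $\bold{hl}(C)$ and $s(C)$. Each ideal boundary arc $I_{\widetilde{\Pi}} \subset \partial \mathbb{H}^2 = S^1$ extends under $\Phi$ to a homeomorphism onto an arc of the round circle $\partial H_\Pi \subset S^2_\infty$. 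Pasting across all lifts produces a $\rho$-equivariant map $\partial \Phi \colon S^1 \to S^2_\infty$, the candidate limit circle.

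Next I would extend $\partial \Phi$ to an equivariant self-homeomorphism $\partial \widetilde{g}$ of $S^2_\infty$ with controlled dilatation. Using the Fuchsian model $\rho_0$ to identify $\partial \mathbb{H}^2$ with a great circle in $S^2_\infty$, it suffices to interpolate between the Fuchsian boundary circle $\partial H_{\Pi,0}$ and the actual $\partial H_\Pi$ across narrow strips bordered at the gluing geodesics. Outside these strips, $\partial \widetilde{g}$ can be taken to be an isometry, hence conformal; inside each strip, the Beltrami differential is determined by the shear-and-bending mismatch and is bounded pointwise by a universal multiple of $\epsilon$. The core estimate is that the strips corresponding to distinct lifts of curves in $\mathcal{C}$ are geometrically disjoint in $S^2_\infty$, so their individual distortions do not compound, giving a global dilatation $\le 1 + K_0\epsilon$.

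Finally, injectivity of $\rho$ follows by equivariance: if $\rho(g)=1$ for a nontrivial $g \in \pi_1(S)$, then $\partial \widetilde{g} \circ \rho_0(g) = \partial \widetilde{g}$ would force $\rho_0(g)=1$ on $\partial \mathbb{H}^2$, contradicting faithfulness of $\rho_0$. Quasi-fuchsianness is then immediate, since $\partial \Phi(S^1)$ equals the limit set of $\rho(\pi_1(S))$ and is a quasi-circle as the $\partial \widetilde{g}$-image of a round circle. The main obstacle will be the dilatation estimate in the third step: each single bending contributes only $O(\epsilon/R)$, but any geodesic in $\mathbb{H}^2$ crosses infinitely many lifts of curves in $\mathcal{C}$, and one must prevent distortions from accumulating. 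The key geometric inputs are the exponential thinness of pants of cuff length $\approx R$ (which geometrically separates distinct strips in $S^2_\infty$) and the near-Fuchsian real shear (which keeps the strips aligned), so that errors of size $\epsilon$ and $\epsilon/R$ remain non-compounding and the global quasi-conformal constant stays bounded by $1+K_0\epsilon$.
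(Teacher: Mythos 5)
This theorem is not proved in the paper; it is quoted from \cite{KM1} and used as a black box (Section~4.1 builds Corollary~\ref{quasiisometry} on top of it), so there is no in-paper proof to compare against. Evaluating your sketch on its own terms, the central flaw is in the first step: viability does \emph{not} make the pants subgroup $\rho(\pi_1(\Pi))$ Fuchsian. A viable pants is only required to be discrete and faithful with geodesic cuffs; its three cuff axes need not be coplanar. The half-length $\bold{hl}_\Pi(C)$ is complex-valued, and whenever $\mathrm{Im}\,\bold{hl}_\Pi(C) \neq 0$ — which the hypothesis $|\bold{hl}(C) - R/2| < \epsilon$ permits — the pants is genuinely skew: the right-angled hexagon defining it is not planar and $\rho(\pi_1(\Pi))$ is not Fuchsian. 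So there is no isometric map of the pants into a plane $H_\Pi$, no round circle $\partial H_\Pi$ carrying its ideal boundary, and your candidate limit map $\partial\Phi$ is not well-defined. The Beltrami differential of the interpolation you describe would also not be supported only on thin strips at the gluing geodesics: skewness of size $O(\epsilon)$ in $\mathrm{Im}\,\bold{hl}$ produces distortion of the same size in the interior of each pants as well.

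Your injectivity argument from equivariance is fine once a quasi-conformal extension exists, and the observation that each gluing contributes only $O(\epsilon/R)$ of bending is the right starting point. But ``the strips are disjoint so errors do not compound'' does not by itself give a pointwise $O(\epsilon)$ bound on the Beltrami coefficient; it only keeps distinct strips from overlapping. One still has to show that the interpolation across each strip is $(1 + O(\epsilon))$-quasi-conformal, which requires controlling the cumulative rotation of the tangent planes of the pleated image along every geodesic in $\widetilde{S}$ — a bounded-geometric-series estimate that comes from the exponential thinness of $R$-pants, not from disjointness alone. That accumulation estimate is the real content of the Kahn--Markovic argument (compare the angle bookkeeping this paper carries out in Proposition~\ref{phi} for the analogous $2$-complex case), and your sketch leaves it entirely implicit. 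To salvage the plan, the pleated map must be built at a finer scale than whole pants — triangle-by-triangle, or along the $1$-complex $Y = \widetilde{\mathcal{C}} \cup \widetilde{\mathcal{A}}$, as the paper itself does when it extends this result in Section~4.1.
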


Since we can suppose that $f$ satisfies the modified Kahn-Markovic condition:
$$
\left\{ \begin{array}{l}
        |\bold{hl}(C)-\frac{R}{2}|<\frac{\epsilon}{R}, \\
        |s(C)-1|<\frac{\epsilon}{R^2},
\end{array} \right.
$$
the induced map $\partial \widetilde{f}:\partial \widetilde{S}=\partial\mathbb{H}^2 \rightarrow \partial \widetilde{M^3}=\partial \mathbb{H}^3$ extends to a $\pi_1(S)$-equivariant $(1+\frac{K_0\epsilon}{R})$-quasiconformal map $\partial \widetilde{g}:\partial \mathbb{H}^3\rightarrow \partial \mathbb{H}^3$.

Let $\rho_0: \pi_1(S)\rightarrow PSL_2(\mathbb{C})$ be the representation near $\rho$ satisfying $\bold{hl}(C)=\frac{R}{2}, s(C)=1$ for any $C\in \mathcal{C}$, then the map $\partial \widetilde{g}:S^2_{\infty}\rightarrow S^2_{\infty}$ is a $(1+\frac{K_0\epsilon}{R})$-quasiconformal conjugacy between $\rho_0(\pi_1(S))$ and $\rho(\pi_1(S))$.

Such a quasiconformal conjugacy between two Kleinian groups can be extended to a quasi-isometry from $\mathbb{H}^3/\rho_0(\pi_1(S))$ to $\mathbb{H}^3/\rho(\pi_1(S))$, and a quantitative version of this result is proved in \cite{Th1} Chapter 11 and \cite{McM} Corollary B.23.

\begin{thm}\label{McM}
Let $M_i=\mathbb{H}^3/\Gamma_i,\ i=1,2$ be two hyperbolic $3$-manifolds with isomorphic fundamental groups, and let $\phi:S^2_{\infty}\rightarrow S^2_{\infty}$ be a $K$-quasiconformal conjugation between $\Gamma_1$ and $\Gamma_2$. Then $\phi$ extends to an equivariant $K^{\frac{3}{2}}$-quasi-isometry $\Phi:\mathbb{H}^3\rightarrow \mathbb{H}^3$. In particular, $M_1$ and $M_2$ are $K^{\frac{3}{2}}$-quasi-isometric with each other.
\end{thm}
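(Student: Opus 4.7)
The plan is to use the Douady--Earle barycentric extension of $\phi$ to the interior of $\mathbb{H}^3$, which yields a canonical, equivariant, smooth map $\Phi\colon \mathbb{H}^3\to\mathbb{H}^3$ whose metric distortion is controlled explicitly by the quasiconformal dilatation of $\phi$. The argument has three stages: construct the extension, check equivariance, and bound the distortion.

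First I would define $\Phi$. For each $x\in\mathbb{H}^3$ the visual probability measure $\mu_x$ on $S^2_\infty$ is well defined, and the push-forward $\phi_*\mu_x$ is an atomless probability measure (quasiconformal maps on $S^2$ are absolutely continuous). Set $\Phi(x)$ to be the unique point $y\in\mathbb{H}^3$ whose visual measure $\mu_y$ balances $\phi_*\mu_x$ --- equivalently, $y$ is the unique minimizer of a strictly convex Busemann-type functional associated to $\phi_*\mu_x$. This is the Douady--Earle extension; it is a smooth diffeomorphism of $\mathbb{H}^3$ and extends continuously by $\phi$ on $S^2_\infty$.

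Next I would verify equivariance. For $\gamma\in\Gamma_1$, write $\gamma'=\phi\gamma\phi^{-1}\in\Gamma_2$. Naturality of the visual measure under hyperbolic isometries gives $\mu_{\gamma x}=\gamma_*\mu_x$, and the conjugation equation $\phi\gamma=\gamma'\phi$ on $S^2_\infty$ implies $\phi_*\mu_{\gamma x}=\gamma'_*\phi_*\mu_x$. Since the balanced-measure condition characterising $\Phi$ is preserved under hyperbolic isometries applied simultaneously to source and target, one concludes $\Phi(\gamma x)=\gamma'\Phi(x)$. Hence $\Phi$ is $(\Gamma_1,\Gamma_2)$-equivariant and descends to a map $\bar\Phi\colon M_1\to M_2$.

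The main obstacle is the quantitative distortion estimate: one must show that at every $x\in\mathbb{H}^3$ the hyperbolic differential $D\Phi(x)$ has operator norm bounded above by $K^{3/2}$, with a matching lower bound $K^{-3/2}$ for its inverse. The proof, due originally to Douady--Earle and recorded quantitatively in McMullen's appendix, proceeds by implicit differentiation of the balance condition defining $\Phi$: the Jacobian of that condition yields a linear system expressing $D\Phi(x)$ in terms of spherical integrals of the Beltrami coefficient of $\phi$ weighted by $\mu_x$. Estimating the eigenvalues of the resulting symmetric endomorphism --- this is where the exponent $3/2$ enters, reflecting that harmonic-type averaging in dimension three converts a pointwise $K$-quasiconformal bound on $S^2$ into a $K^{3/2}$ bilipschitz bound on $\mathbb{H}^3$ --- and then integrating along hyperbolic geodesics produces the global quasi-isometry inequality. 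Rather than reprove this technical estimate here, I would cite Thurston Chapter 11 and McMullen's Corollary B.23, which carry out precisely this calculation.
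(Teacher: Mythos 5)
Your proposal is correct and takes essentially the same route the paper does. The paper does not actually reproduce a proof of Theorem \ref{McM}; it only points to Thurston's Chapter 11 and McMullen's Corollary B.23, and your sketch of the barycentric (visual) extension --- construction via the balance of pushed-forward visual measures, equivariance from naturality of visual measures under isometries, and the pointwise $K^{3/2}$ bilipschitz bound deferred to the same two references --- is exactly the argument those sources carry out.
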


The proof of Theorem \ref{McM} is quite differential geometry style, and the $L$-quasi-isometry showed up in the statement means $(L,0)$-quasi-isometry in the coarse geometry sense. In the following part of this paper, we will use the language in the coarse geometry.

Given the previous a few theorems and the modified Kahn-Markovic condition, we have the following quick corollary.

\begin{col}\label{quasiisometry}
For any closed hyperbolic $3$-manifold $M$, there exist constants $K_0, \hat{\epsilon}>0, \hat{R}>0$ such that the following statement hold. Suppose $S$ is a hyperbolic surface with a pants decomposition $\mathcal{C}$, and $f:S\looparrowright M$ is a viable map satisfying the modified Kahn-Markovic condition
$$
\left\{ \begin{array}{l}
        |\bold{hl}(C)-\frac{R}{2}|<\frac{\epsilon}{R}, \\
        |s(C)-1|<\frac{\epsilon}{R^2},
\end{array} \right.
$$
for each $C\in \mathcal{C}$, with $0<\epsilon<\hat{\epsilon}$ and $R>\hat{R}$.

Let $\rho=f_*:\pi_1(S)\rightarrow PSL_2(\mathbb{C})$ be the induced map on the fundamental group, and $\rho_0:\pi_1(S)\rightarrow PSL_2(\mathbb{C})$ be the representation near $\rho$ and satisfying $\bold{hl(C)}=\frac{R}{2},\ s(C)=1$.

Then there exists a $(1+\frac{2K_0\epsilon}{R},0)$-quasi-isometry $\widetilde{g}:\mathbb{H}^3 \rightarrow \mathbb{H}^3$ such that $\widetilde{g} \circ \rho_0(x)=\rho(x)\circ\widetilde{g}$ for any $x\in \pi_1(S)$. In particular, there exists a $(1+\frac{2K_0\epsilon}{R},0)$-quasi-isometry $g:M_1 \rightarrow M_2$.
\end{col}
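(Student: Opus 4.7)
The plan is to chain Theorem \ref{KMinj} (in its modified form) with Theorem \ref{McM}. First, I invoke the version of Theorem \ref{KMinj} applicable under the strengthened hypotheses: since $|\bold{hl}(C)-\frac{R}{2}|<\frac{\epsilon}{R}$ and $|s(C)-1|<\frac{\epsilon}{R^2}$ for every $C\in\mathcal{C}$, the same argument that gave a $(1+K_0\epsilon)$-quasiconformal boundary extension now yields a $\pi_1(S)$-equivariant $(1+\frac{K_0\epsilon}{R})$-quasiconformal map $\partial\widetilde{g}:S^2_\infty\to S^2_\infty$, as already remarked in the paragraph immediately following Theorem \ref{KMinj}. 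Because $\rho_0$ is defined so that each $\bold{hl}(C)$ and $s(C)$ take the exact reference values $\frac{R}{2}$ and $1$, and $\rho_0$ is a discrete faithful representation close to $\rho$, the map $\partial\widetilde{g}$ is precisely a quasiconformal conjugation between $\rho_0(\pi_1(S))$ and $\rho(\pi_1(S))$.

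Next, I apply Theorem \ref{McM} with $K=1+\frac{K_0\epsilon}{R}$. This upgrades $\partial\widetilde{g}$ to a $\pi_1(S)$-equivariant $\bigl(1+\frac{K_0\epsilon}{R}\bigr)^{3/2}$-quasi-isometry $\widetilde{g}:\mathbb{H}^3\to\mathbb{H}^3$ satisfying $\widetilde{g}\circ\rho_0(x)=\rho(x)\circ\widetilde{g}$ for every $x\in\pi_1(S)$. Descending to the quotient then produces the induced $\bigl(1+\frac{K_0\epsilon}{R}\bigr)^{3/2}$-quasi-isometry $g:M_1\to M_2$.

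Finally, I absorb the exponent $3/2$ into the bound in the statement. Using the elementary estimate $(1+x)^{3/2}\le 1+2x$ valid for all $0\le x\le x_0$ for some absolute constant $x_0>0$, it suffices to guarantee that $\frac{K_0\epsilon}{R}\le x_0$, which is automatic once $\hat\epsilon$ is small enough and $\hat R$ large enough. With such a choice, $\bigl(1+\frac{K_0\epsilon}{R}\bigr)^{3/2}\le 1+\frac{2K_0\epsilon}{R}$, giving the asserted $(1+\frac{2K_0\epsilon}{R},0)$-quasi-isometry constants both for $\widetilde g$ and for the quotient map $g$.

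There is essentially no serious obstacle: the corollary is a direct concatenation of the two quoted theorems with the observation that the exponential-mixing improvement strengthens the quasiconformal constant from $1+K_0\epsilon$ to $1+K_0\epsilon/R$. The only minor points are the elementary inequality $(1+x)^{3/2}\le 1+2x$, which forces the explicit threshold on $\hat\epsilon$ and $\hat R$, and the verification that the quasiconformal map produced by Theorem \ref{KMinj} really conjugates $\rho_0$ to $\rho$ — but this is built into the statement of Theorem \ref{KMinj}, since $\partial\widetilde{f}$ is by construction $\pi_1(S)$-equivariant for the representation $\rho$ while the domain circle is parametrized by the reference hyperbolic structure corresponding to $\rho_0$.
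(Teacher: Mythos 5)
Your proposal is correct and follows the same route the paper intends: the modified Kahn–Markovic condition upgrades the quasiconformal constant in Theorem~\ref{KMinj} to $1+\frac{K_0\epsilon}{R}$, Theorem~\ref{McM} then yields a $(1+\frac{K_0\epsilon}{R})^{3/2}$-quasi-isometry, and the elementary estimate $(1+x)^{3/2}\le 1+2x$ for small $x\ge 0$ (which holds since the derivative of $(1+x)^{3/2}-1-2x$ at $0$ is $-\tfrac12$) absorbs the exponent once $\hat\epsilon$ is small and $\hat R$ is large. The paper labels this a quick corollary of the preceding results without spelling out the computation, and your reconstruction matches.
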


Although we know the existence of the equivariant quasi-isometry $\widetilde{g}:\mathbb{H}^3\rightarrow \mathbb{H}^3$, we need to know more detail about it. So we will use a more concrete partially defined map to approximate $\widetilde{g}$.

Let $(S,\mathcal{C})$ be the pants decomposition in Corollary \ref{quasiisometry}. Endow $S$ with the hyperbolic metric with $\bold{hl}(C)=\frac{R}{2}$ and $s(C)=1$ for any $C\in \mathcal{C}$, then $\mathcal{C}$ is a union of simple closed geodesics on $S$.

Let $\widetilde{\mathcal{C}} \subset \mathbb{H}^2$ be the preimage of $\mathcal{C}$. For each pair of pants $\Pi$ in $S$ with three cuffs $c_i,\ i=1,2,3$, there are three seams $a_i,\ i=1,2,3$ in $\Pi$ with $a_i$ connects $c_{i-1}$ and $c_{i+1}$. We suppose that $a_i$ is perpendicular with both $c_{i-1}$ and $c_{i+1}$ in the hyperbolic surface $S$. Let $\mathcal{A}$ be the union of all the seams in $S$, and $\widetilde{\mathcal{A}} \subset \mathbb{H}^2$ be the preimage of $\mathcal{A}$. Let $Y=\widetilde{\mathcal{C}}\cup \widetilde{\mathcal{A}}\subset \mathbb{H}^2\subset \mathbb{H}^3$, then $Y$ is a $\rho_0(\pi_1(S))$-equivariant $1$-subcomplex in $\mathbb{H}^3$, and $Y \subset \mathbb{H}^2$ gives a tessellation of $\mathbb{H}^2$ by isometric right-angled hexagons.

In Corollary \ref{quasiisometry}, we can suppose $\widetilde{g}:\mathbb{H}^3 \rightarrow \mathbb{H}^3$ has been extended to a self-map on $\mathbb{H}^3\cup S^2_{\infty}$ and we still denote it by $\widetilde{g}$.

Let $Y'$ be the $\rho(\pi_1(S))$-equivariant $1$-complex in $\mathbb{H}^3$ defined as the following. For any geodesic $\gamma \subset \widetilde{\mathcal{C}}$ with end points $x,y \in S^2_{\infty}$, we use $\partial \widetilde{g}(\gamma)$ to denote the geodesic with end points $\widetilde{g}(x)$ and $\widetilde{g}(y)$. For any geodesic arc $\alpha \subset \widetilde{\mathcal{A}}$ orthogonal with geodesics $\gamma_1,\gamma_2 \subset \widetilde{\mathcal{C}}$, we use $\partial \widetilde{g}(\alpha)$ to denote the common perpendicular geodesic of $\partial \widetilde{g}(\gamma_1)$ and $\partial \widetilde{g}(\gamma_2)$. Then we define $Y'=\partial \widetilde{g}(\widetilde{\mathcal{C}}) \cup \partial \widetilde{g}(\widetilde{\mathcal{A}})$.

Now we can define a piecewise linear $\pi_1(S)$-equivariant homeomorphism $h:Y\rightarrow Y'$. $h$ maps each $\alpha \subset \widetilde{\mathcal{A}}$ to $\partial \widetilde{g}(\alpha)$ linearly, and $h$ maps each $\gamma \subset \widetilde{\mathcal{C}}$ to $\partial \widetilde{g}(\gamma)$ piecewise linearly such that the restriction of $h$ on each component of $\gamma\setminus \widetilde{\mathcal{A}}$ is linear.

Since the modified Kahn-Markovic condition is satisfied by $\rho$, it is easy to check that $h$ is a $(1+\frac{2\epsilon}{R},0)$-quasi-isometry on each geodesic segment of $Y=\widetilde{\mathcal{C}}\cup \widetilde{\mathcal{A}}$. However, we do not know whether $h:Y\rightarrow Y'$ is a globally quasi-isometry yet. By using the information given by $\widetilde{g}$, we will show that $h$ is actually a quasi-isometry.

\begin{thm}\label{quasi}
Under the induced metric from $\mathbb{H}^3$, $h:Y\rightarrow Y'$ is a $(1+\frac{K\epsilon}{R},K(\epsilon+\frac{1}{R})^{\frac{1}{5}})$-quasi-isometry for some universal constant $K>0$.
\end{thm}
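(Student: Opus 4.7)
The plan is to exploit the global equivariant $(1+\frac{2K_0\epsilon}{R},0)$-quasi-isometry $\widetilde{g}:\mathbb{H}^3 \to \mathbb{H}^3$ supplied by Corollary \ref{quasiisometry} and compare the piecewise-linear map $h$ to the restriction $\widetilde{g}|_Y$. Since $\widetilde{g}$ already provides a global almost-isometry, the theorem will follow once I establish the uniform pointwise bound
\[
\sup_{y \in Y} d_{\mathbb{H}^3}(h(y), \widetilde{g}(y)) \leq K(\epsilon + \tfrac{1}{R})^{1/5},
\]
combined with the triangle inequality.

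To prove the pointwise bound I would analyze seams and cuff-lifts separately. For a seam $\alpha \subset \widetilde{\mathcal{A}}$ connecting two cuff-lifts $\gamma_1, \gamma_2$, the image $h(\alpha)$ is by construction the common perpendicular of the genuine geodesics $\partial\widetilde{g}(\gamma_1)$ and $\partial\widetilde{g}(\gamma_2)$, whereas $\widetilde{g}(\alpha)$ is only a $(1+\frac{2K_0\epsilon}{R},0)$-quasi-geodesic with endpoints on the same two geodesics; a quantitative fellow-travelling estimate for almost-isometric quasi-geodesics in $\mathbb{H}^3$ then bounds their Hausdorff distance by a quantity tending to $0$ as $\epsilon/R \to 0$. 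For a cuff-lift $\gamma \subset \widetilde{\mathcal{C}}$, the geodesic $h(\gamma) = \partial\widetilde{g}(\gamma)$ and the quasi-geodesic $\widetilde{g}(\gamma)$ share the same endpoints at infinity, so the same fellow-travelling estimate controls the image of any interior point; moreover, $h$ is parametrized so that feet of seams map to feet of seams, which prevents the piecewise linear parametrization from drifting far from the quasi-isometric one along $\gamma$.

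Given the uniform pointwise estimate, the conclusion is formal. For any $y_1, y_2 \in Y$, the triangle inequality gives
\[
\bigl\lvert d_{\mathbb{H}^3}(h(y_1), h(y_2)) - d_{\mathbb{H}^3}(\widetilde{g}(y_1), \widetilde{g}(y_2)) \bigr\rvert \leq 2K(\epsilon + \tfrac{1}{R})^{1/5},
\]
and combining this with the global multiplicative bound for $\widetilde{g}$ and with the fact that $h$ is already a $(1+\frac{2\epsilon}{R},0)$-quasi-isometry along each individual segment of $Y$ yields the stated $(1+\frac{K\epsilon}{R}, K(\epsilon+\frac{1}{R})^{1/5})$-quasi-isometry constants.

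The main obstacle is the quantitative pointwise estimate, and in particular the emergence of the $1/5$ exponent in the additive constant. A naive application of the Morse lemma for quasi-geodesics produces only a bound logarithmic in $\epsilon/R$, which is not of power type; one must exploit the specific scale $R$ at which the combinatorics of $Y$ operates together with the quasi-conformal distortion of order $\epsilon/R$ on $S^2_\infty$. The exponent $1/5$ presumably arises from balancing these two competing scales in an elementary hyperbolic geometry computation involving the radial extension of the quasi-conformal boundary map and the size of right-angled hexagons of cuff-length $R$.
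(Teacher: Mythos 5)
Your overall strategy is correct and matches the paper's at the top level: compare $h$ to the global equivariant $(1+\frac{2K_0\epsilon}{R},0)$-quasi-isometry $\widetilde{g}$, and the $1/5$ exponent does indeed enter through a quantitative fellow-travelling bound for $(1+\delta,\delta)$-quasigeodesics in $\mathbb{H}^3$ (the paper's Lemma~\ref{delta} gives $N_\eta$-closeness for $\eta=5\delta^{1/5}$). You also correctly observe that a naive Morse lemma is too weak and that one must exploit the scale of $R$ and $\epsilon/R$.

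However, there is a genuine gap where you write that $h$ maps feet of seams to feet of seams, ``which prevents the piecewise linear parametrization from drifting far from the quasi-isometric one along $\gamma$,'' and defer the ``quantitative pointwise estimate'' to an unspecified elementary computation. This is exactly the nontrivial content of the theorem, and the fellow-travelling estimate alone does not deliver it: Lemma~\ref{delta} shows the curve $\widetilde{g}(\gamma)$ lies in a thin neighborhood of the geodesic $h(\gamma)=\partial\widetilde{g}(\gamma)$, which controls $d(\widetilde{g}(x),h(\gamma))$, but says nothing about $d(\widetilde{g}(x),h(x))$ — the two parametrizations could a priori slide arbitrarily along $\gamma$. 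The paper handles this by first replacing $\widetilde{g}|_{\widetilde{\mathcal{C}}}$ with its nearest-point projection $\widetilde{g}'$ onto $\partial\widetilde{g}(\widetilde{\mathcal{C}})$ (giving a $(1+\frac{2K_0\epsilon}{R},2\eta)$-quasi-isometry), and then pinning down the drift $l$ between $h(x)$ and $\widetilde{g}'(x)$ at a seam--cuff intersection point $x$ via a rigidity argument: it uses the group translates $y_1=\gamma_2(x)$ and $y_2=\gamma_1(y_1)$, computes that $d(y_1,y_2)=\frac{5}{2}R+O(e^{-R/2})$ in the Fuchsian model, and shows by explicit right-angled hexagon trigonometry that if $|l|$ were large then $d(\widetilde{g}'(y_1),\widetilde{g}'(y_2))$ would grow like $|2l+\frac{5R}{2}|$, violating the $(1+\frac{2K_0\epsilon}{R},2\eta)$-quasi-isometry property of $\widetilde{g}'$. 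Running the same argument with $\gamma_2^{-1}$, $\gamma_1^{-1}$ rules out drift in the other direction, and density of $\widetilde{\mathcal{C}}$ in $Y$ finishes the proof. Your proposal never supplies anything playing the role of this rigidity argument, so the pointwise bound — and hence the theorem — is not actually established.
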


We need to show the following elementary lemma first, such kind of statements are well-known and we give a quantitative version here.

\begin{lem}\label{delta}
For small enough $\delta>0$, suppose $\gamma:[0,n]\rightarrow \mathbb{H}^3$ is a $(1+\delta,\delta)$-quasigeodesic in $\mathbb{H}^3$, and let $\bar{\gamma}$ be the geodesic with end points $\gamma(0)$ and $\gamma(n)$, then $\gamma\subset N_{\eta}(\bar{\gamma})$ for $\eta=5\delta^{\frac{1}{5}}$.
\end{lem}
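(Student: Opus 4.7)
\emph{Proof plan.} I argue by contradiction: suppose $\gamma$ exits the $\eta$-neighborhood of $\bar\gamma$ for $\eta=5\delta^{1/5}$, and let $p=\gamma(t_0)$ realize $\eta^\ast=\max_t d(\gamma(t),\bar\gamma)\geq\eta$. Since $\gamma(0),\gamma(n)\in\bar\gamma$, the distance function $t\mapsto d(\gamma(t),\bar\gamma)$ vanishes at both endpoints, so I pick the maximal interval $[s,t]\ni t_0$ on which this distance exceeds $\eta^\ast/2$, and set $A'=\gamma(s)$, $B'=\gamma(t)$, $T=t-s$. The plan is to produce one estimate that upper bounds $T$ in terms of $\eta^\ast$ and $\delta$, and a second that lower bounds the detour cost of passing through $p$; combining them will force $\eta^\ast$ to be small.

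For the upper bound on $T$, I work in Fermi coordinates along $\bar\gamma$, in which the hyperbolic metric reads $dr^2+\cosh^2(r)\,d\ell^2+\sinh^2(r)\,d\theta^2$; this shows that the nearest-point projection $\pi$ onto $\bar\gamma$ is $\operatorname{sech}(r)$-Lipschitz at distance $r$. Since $r\geq \eta^\ast/2$ on $[s,t]$, the length of $\pi(\gamma|_{[s,t]})$ is at most $T\operatorname{sech}(\eta^\ast/2)$. Combining with the triangle inequality $d(A',B')\leq \eta^\ast+\operatorname{length}(\pi(\gamma|_{[s,t]}))$ and the quasigeodesic lower bound $d(A',B')\geq T/(1+\delta)-\delta$, a short rearrangement yields $T\leq C(\eta^\ast+\delta)/((\eta^\ast)^2-O(\delta))$ provided $(\eta^\ast)^2$ comfortably exceeds $\delta$.

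For the detour cost, I use that tubes around geodesics in $\mathbb H^3$ are convex, so the geodesic segment $\beta$ from $A'$ to $B'$ lies inside $N_{\eta^\ast/2}(\bar\gamma)$, giving $d(p,\beta)\geq \eta^\ast/2$. Dropping the perpendicular from $p$ to $\beta$ and applying the right-triangle identity $\cosh c=\cosh a\cosh h$ on each side yields the standard excess estimate $d(A',p)+d(p,B')\geq d(A',B')+2\ln\cosh(\eta^\ast/2)$. Since $d(A',p)+d(p,B')\leq (1+\delta)T+2\delta$ (subarcs of $\gamma|_{[s,t]}$) and $d(A',B')\geq T/(1+\delta)-\delta$, this simplifies to $2\ln\cosh(\eta^\ast/2)\leq 2T\delta/(1+\delta)+3\delta$. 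Substituting the upper bound on $T$ and using $\ln\cosh x\geq x^2/3$ for moderate $x$ produces $(\eta^\ast)^3\leq C'\delta$, hence $\eta^\ast\leq C''\delta^{1/3}$. Since $\delta^{1/3}<\delta^{1/5}$ for $\delta<1$, this contradicts $\eta^\ast\geq 5\delta^{1/5}$ once $\delta$ is small.

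The main obstacle will be cleanly tracking the constants so the $\operatorname{sech}$ upper bound on $T$ is actually usable: it requires $(\eta^\ast)^2$ safely larger than $\delta$, i.e.\ $\eta^\ast\gg\sqrt\delta$. In the complementary regime $\eta^\ast\lesssim\sqrt\delta$ the desired conclusion is automatic because $\sqrt\delta\ll\delta^{1/5}$ for small $\delta$, but the threshold for ``small enough $\delta$'' must be chosen compatibly with both ranges.
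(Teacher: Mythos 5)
Your strategy (project the excursion onto $\bar\gamma$ to cap the time $T$ spent far away, then play that against a geodesic-detour lower bound) is the standard Morse-lemma template for quasi-geodesics in hyperbolic space, and the exponent bookkeeping is essentially right: in the regime $\eta^\ast\gg\sqrt\delta$ you force $(\eta^\ast)^3\lesssim\delta$, while in the complementary regime the conclusion is automatic, so the resulting $\eta^\ast\lesssim\delta^{1/3}$ implies the lemma's $5\delta^{1/5}$ once $\delta$ is small.

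There is, however, a genuine gap in the first half. You write ``the length of $\pi(\gamma|_{[s,t]})$ is at most $T\operatorname{sech}(\eta^\ast/2)$,'' which tacitly treats $T=t-s$ as an upper bound for the arc length of $\gamma|_{[s,t]}$. That is not available for a $(1+\delta,\delta)$-quasi-geodesic: the additive constant $\delta$ means $\gamma$ need not be Lipschitz, rectifiable, or even continuous, so $\operatorname{length}(\gamma|_{[s,t]})$ is not defined, let alone bounded by $T$. The $\operatorname{sech}(r)$ contraction of the nearest-point projection is an infinitesimal statement (it controls $\|d\pi\|$ at distance $r$) and becomes a length bound only after integrating along an honest rectifiable path that stays far from $\bar\gamma$; it is \emph{not} a two-point Lipschitz bound, since the geodesic joining two far-away points may dip close to $\bar\gamma$. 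To repair this you would need an interpolation step: replace $\gamma|_{[s,t]}$ by the piecewise geodesic through $\gamma(s),\gamma(s+h),\gamma(s+2h),\dots$ for a mesh $h\asymp\sqrt\delta$; that path has length at most $(1+\delta)T+(T/h)\delta\lesssim T(1+\sqrt\delta)$, and — crucially, and only because you are already in the regime $\eta^\ast\gg\sqrt\delta$ — it stays at distance $\gtrsim\eta^\ast/2$ from $\bar\gamma$, so the $\operatorname{sech}$ contraction may be applied to it. Without something of this sort the bound $T\leq C(\eta^\ast+\delta)/((\eta^\ast)^2-O(\delta))$ is unjustified, and it is the bound the whole argument hinges on. (Your second half, the excess estimate $d(A',p)+d(p,B')\geq d(A',B')+2\ln\cosh(\eta^\ast/2)$ via convexity of the tube and $\cosh c=\cosh a\cosh h$, is fine and uses only the three-point quasi-geodesic inequalities.)

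The paper's proof is a genuinely different and, for this purpose, cleaner route. It takes the farthest point $\gamma(t_0)$ together with the two samples $\gamma(t_0\pm\delta^{1/5})$, notes that the two samples are no farther from $\bar\gamma$ than $\gamma(t_0)$, and compares the resulting hyperbolic triangle against the quasi-geodesic lower bound $d(\gamma(t_1),\gamma(t_2))\geq 2\delta^{1/5}/(1+\delta)-\delta$ via the hyperbolic law of cosines, yielding $\tanh^2 D\lesssim\delta^{2/5}$. This is a purely three-point trigonometric argument: no projection of a path, hence no rectifiability or interpolation issue, and it directly delivers the $\delta^{1/5}$ rate. Your global projection approach, once repaired as above, would give the sharper rate $\delta^{1/3}$, at the cost of an extra interpolation lemma and a more delicate two-regime case split.
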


\begin{proof}
Suppose $\gamma(t_0)$ is the point on $\gamma([0,n])$ which is the farthest one from $\bar{\gamma}$, and let $D=d(\gamma(t_0),\bar{\gamma})$. We can suppose $D>2\delta^{\frac{1}{5}}$, or the lemma holds trivially. In this case, $t_0,n-t_0>\frac{2\delta^{\frac{1}{5}}-\delta}{1+\delta}>\delta^{\frac{1}{5}}$.

Let $t_1=t_0-\delta^{\frac{1}{5}}$, $t_2=t_0+\delta^{\frac{1}{5}}$, let $l_i=d(\gamma(t_0),\gamma(t_i))$ for $i=1,2$, and $d_i=d(\gamma(t_i),\bar{\gamma})\leq D$. We use $x_j$ to denote the orthogonal projection of $\gamma(t_j)$ on $\bar{\gamma}$ for $j=0,1,2$.

For two points $x,y \in \mathbb{H}^3$, we will use $\overline{xy}$ to denote the geodesic segment with end points $x$ and $y$.

Let $\phi_i$ be the angle difference between $\overline{\gamma(t_0)x_0}$ and $\overline{\gamma(t_i)x_i}$ along geodesic $\bar{\gamma}$ for $i=1,2$ (by using the parallel transportation from $x_0$ to $x_i$). Let $\theta_i$ be the angle between $\overline{\gamma(t_i)\gamma(t_0)}$ and $\overline{\gamma(t_0)x_0}$, and let $\theta$ be the angle between $\overline{\gamma(t_1)\gamma(t_0)}$ and $\overline{\gamma(t_2)\gamma(t_0)}$. Then $\theta\leq \theta_1+\theta_2$.

A computation in hyperbolic geometry gives:
\begin{equation}\label{1}
\cos{\theta_i}=\frac{\sinh{D}\cosh{l_i}-\sinh{d_i}\cos{\phi_i}}{\cosh{D}\sinh{l_i}}\geq \frac{\sinh{D}\cosh{l_i}-\sinh{D}}{\cosh{D}\sinh{l_i}}=\tanh{D}\tanh{\frac{l_i}{2}}.
\end{equation}

Since $t_2-t_1=2\delta^{\frac{1}{5}}$, and $\gamma$ is a $(1+\delta,\delta)$-quasi-isometry, $d(\gamma(t_1),\gamma(t_2))\geq \frac{2\delta^{\frac{1}{5}}}{1+\delta}-\delta$.

By the hyperbolic cosine law and \eqref{1}:
\begin{equation}\label{2}
\begin{split}
\cosh{d(\gamma(t_1),\gamma(t_2))}& =\cosh{l_1}\cosh{l_2}-\sinh{l_1}\sinh{l_2}\cos{\theta}\\
& \leq \cosh{l_1}\cosh{l_2}-\sinh{l_1}\sinh{l_2}\cos{(\theta_1+\theta_2)} \\
& \leq \cosh{l_1}\cosh{l_2}+\sinh{l_1}\sinh{l_2}(1-\tanh^2{D}\tanh{\frac{l_1}{2}}\tanh{\frac{l_2}{2}}).
\end{split}
\end{equation}

So we have
\begin{equation}
\tanh^2{D}\leq\frac{\cosh{(l_1+l_2)}-\cosh{d(\gamma(t_1),\gamma(t_2))}}{\sinh{l_1}\sinh{l_2}\tanh{\frac{l_1}{2}}\tanh{\frac{l_2}{2}}}.
\end{equation}

Since $\frac{\delta^{\frac{1}{5}}}{1+\delta}-\delta\leq l_1, l_2\leq (1+\delta)\delta^{\frac{1}{5}}+\delta$ and $d(\gamma(t_1),\gamma(t_2))\geq \frac{2\delta^{\frac{1}{5}}}{1+\delta}-\delta$, we have:
\begin{equation}
\tanh^2{D}\leq \frac{\cosh{(2(1+\delta)\delta^{\frac{1}{5}}+2\delta)}-\cosh{(\frac{2\delta^{\frac{1}{5}}}{1+\delta}-\delta)}}{\sinh^2{(\frac{\delta^{\frac{1}{5}}}{1+\delta}-\delta)}
\tanh^2{(\frac{\delta^{\frac{1}{5}}}{2(1+\delta)}-\frac{\delta}{2})}}=24\delta^{\frac{2}{5}}(1+O(\delta^{\frac{1}{5}})).
\end{equation}

So $D=\sqrt{24}\delta^{\frac{1}{5}}(1+O(\delta^{\frac{1}{5}}))<5\delta^{\frac{1}{5}}$.
\end{proof}

Since $\widetilde{g}:\mathbb{H}^3 \rightarrow \mathbb{H}^3$ is a $(1+\frac{2K_0\epsilon}{R},0)$-quasi-isometry, by Lemma \ref{delta}, we know that $\widetilde{g}(\widetilde{\mathcal{C}})\subset N_{\eta}(\partial \widetilde{g}(\widetilde{\mathcal{C}}))$ for $\eta=5(\frac{2K_0\epsilon}{R})^{\frac{1}{5}}$. Let $p:\widetilde{g}(\widetilde{\mathcal{C}})\rightarrow \partial \widetilde{g}(\widetilde{\mathcal{C}})$ be the nearest point projection from $\widetilde{g}(\gamma)$ to $\partial \widetilde{g}(\gamma)$ for each $\gamma \subset \widetilde{\mathcal{C}}$. Since $p$ moves every point in $\widetilde{g}(\widetilde{\mathcal{C}})$ by at most $\eta$. Let $\widetilde{g}'=p\circ \widetilde{g}|_{\widetilde{\mathcal{C}}}:\widetilde{\mathcal{C}} \rightarrow \partial \widetilde{g}(\widetilde{\mathcal{C}})$, then $\widetilde{g}'$ is a $\pi_1(S)$-equivariant $(1+\frac{2K_0\epsilon}{R},2\eta)$-quasi-isometry. We will compare $\widetilde{g}'$ and $h$, which shows that $h$ is a quasi-isometry.

Now we are ready to prove Theorem \ref{quasi}.

\begin{proof}
For an oriented geodesic $\gamma \in \widetilde{\mathcal{C}}$ corresponding with a curve $C\subset \mathcal{C}$, we will also use $\gamma$ to denote the hyperbolic isometry corresponding with the oriented curve $C\subset S$.

For any $\alpha \in \widetilde{\mathcal{A}}$ which is orthogonal to $\gamma_1,\gamma_2 \in \widetilde{\mathcal{C}}$, we give orientations for $\alpha, \gamma_1,\gamma_2$ as in Figure 1. Let $x=\alpha\cap \gamma_2$, $y_1=\gamma_2(x)$ and $y_2=\gamma_1(y_1)$.

\begin{center}
\psfrag{a}[]{$\gamma_1$} \psfrag{b}[]{$\gamma_2$} \psfrag{c}[]{$\gamma_1(\gamma_2)$} \psfrag{d}[]{$\alpha$}
\psfrag{x}[]{$x$} \psfrag{y}[]{$y_1$} \psfrag{z}[]{$y_2$}
\includegraphics[width=3.5in]{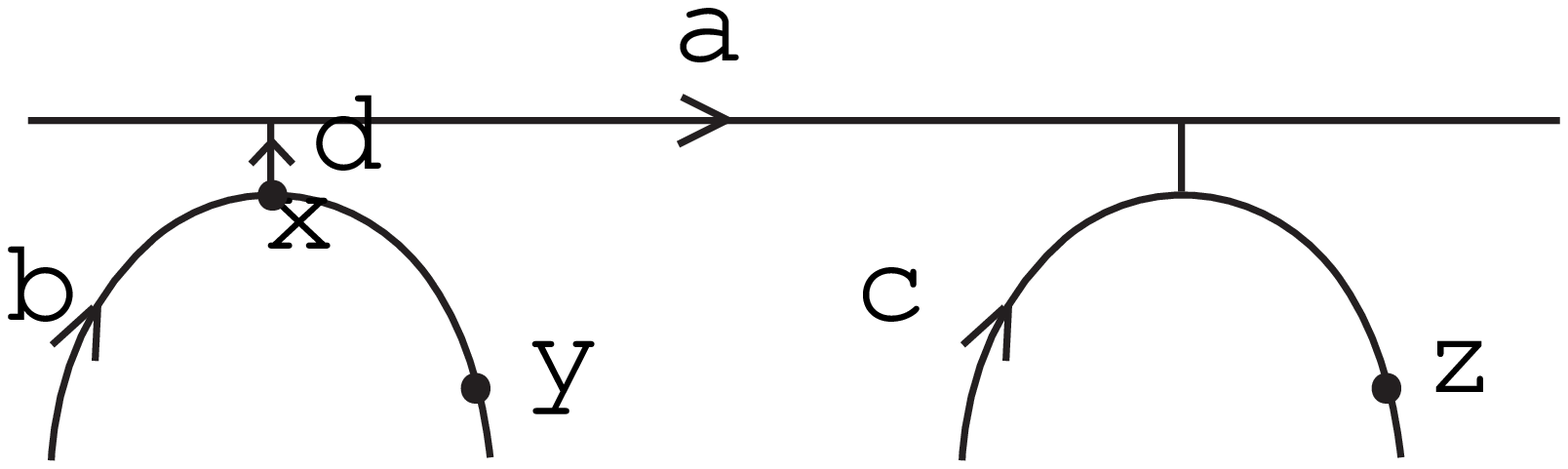}
\vskip 0.5 truecm
 \centerline{Figure 1}
\end{center}

We will compare $\widetilde{g}'(x)$ and $h(x)$ on $\partial \widetilde{g}(\gamma_2)$, and show that they are close to each other.

Let $d_1=d(\gamma_1,\gamma_2)$. By the hyperbolic cosine law of right-angled hexagons, and all the curves $C\subset \mathcal{C}$ satisfy $\bold{hl}(C)=\frac{R}{2}$, we have $\cosh{d_1}=\frac{\cosh{\frac{R}{2}}}{\cosh{\frac{R}{2}}-1}$. Let $d_2=d(y_1,\gamma_1)$. Since $d(x,y)=R$, we have $\sinh{d_2}=\sinh{d_1}\cosh{R}$, and
\begin{equation}
\sinh^2{d_2}=\cosh^2{R}\frac{2\cosh{\frac{R}{2}}-1}{(\cosh{\frac{R}{2}}-1)^2}.
\end{equation}

By computations in hyperbolic geometry of $\mathbb{H}^2$, we have
\begin{equation}
\begin{split}
\cosh{d(y_1,y_2)} &=\cosh^2{d_2}\cosh{R}-\sinh^2{d_2}\\
& =\cosh{R}+\cosh^2{R}(\cosh{R}-1)\frac{2\cosh{\frac{R}{2}}-1}{(\cosh{\frac{R}{2}}-1)^2}\\
& =\frac{1}{2}e^{\frac{5}{2}R}(1+O(e^{-\frac{R}{2}})).
\end{split}
\end{equation}
So $d(y_1,y_2)=\frac{5}{2}R+O(e^{-\frac{R}{2}})$.

Now we think about the position of $\widetilde{g}'(x)$ on $\partial \widetilde{g}(\gamma_2)$. Let $l$ be the oriented distance between $h(x)=h(\gamma_2)\cap h(\alpha)$ and $\widetilde{g}'(x)$ on $h(\gamma_2)$. We will prove that $l$ is very small.

Let $d_1'=\bold{d}_{\alpha}(\gamma_2,\gamma_1)$, by the hyperbolic cosine law of right-angled hexagons,
\begin{equation}\label{7}
d_1'=2e^{-\frac{R}{4}+\frac{1}{2R}(\epsilon_3-\epsilon_2-\epsilon_1)}+O(e^{-\frac{3R}{4}})
\end{equation}
for complex numbers $\epsilon_i$ with $|\epsilon_i|<\epsilon$ for $i=1,2,3$. Let $d_2'=d(\widetilde{g}'(y_1),\partial \widetilde{g}(\gamma_1))$, and let the real length on $\partial \widetilde{g}(\gamma_2)$ be $R+\frac{2\epsilon_4}{R}$ for some real number $\epsilon_4$ with $|\epsilon_4|<\epsilon$, then
\begin{equation}\label{8}
\begin{split}
\sinh^2{d_2'}& =\cosh^2{(l+R+\frac{2\epsilon_4}{R})}\sinh^2{(\Re{(d_1')})}+\sinh^2{(l+R+\frac{2\epsilon_4}{R})}\sin^2{(\Im{(d_1')})}\\
& \geq \cosh^2{(l+R-\frac{2\epsilon}{R})\sinh^2{\left((2+O(e^{-\frac{R}{2}}))e^{-\frac{R}{4}-\frac{3\epsilon}{2R}}\right)}}.
\end{split}
\end{equation}
Here $\Re{(z)}$ and $\Im{(z)}$ are the real and imaginary part of a complex number $z$ respectively.

Let $R_1=\bold{l}(h(\gamma_1))$ be the complex translation length of $h(\gamma_1)$, then $|R_1-R|<\frac{2\epsilon}{R}$. So by \eqref{7} and \eqref{8}, we have
\begin{equation}
\begin{split}
& \cosh{(d(g'(y_1),g'(y_2)))}\\
 = & \cosh^2{d_2'}\cosh{\Re{(R_1)}}-\sinh^2{d_2'}\cos{\Im{(R_1)}}\\
 \geq & \cosh^2{d_2'}\cosh{(R-\frac{2\epsilon}{R})}-\sinh^2{d_2'}\\
 = & \cosh{(R-\frac{2\epsilon}{R})}+(\cosh{(R-\frac{2\epsilon}{R})}-1)\sinh^2{d_2'}\\
\geq & \cosh{(R-\frac{2\epsilon}{R})}+(\cosh{(R-\frac{2\epsilon}{R})}-1)\cosh^2{(l+R-\frac{2\epsilon}{R})}\sinh^2{\left((2+O(e^{-\frac{R}{2}}))e^{-\frac{R}{4}-\frac{3\epsilon}{2R}}\right)}\\
= &\frac{1}{2}e^{2l+\frac{5R}{2}-\frac{9\epsilon}{R}}(1+O(e^{-\frac{R}{2}})).
\end{split}
\end{equation}

So $d(\widetilde{g}'(y_1),\widetilde{g}'(y_2))\geq |2l+\frac{5R}{2}-\frac{9\epsilon}{R}+O(e^{-\frac{R}{2}})|.$ Since $\widetilde{g}'$ is a $(1+\frac{2K_0\epsilon}{R},2\eta)$-quasi-isometry and $d(y_1,y_2)=\frac{5}{2}R+O(e^{-\frac{R}{2}})$, the following inequality holds:
\begin{equation}\label{10}
(1+\frac{2K_0\epsilon}{R})(\frac{5}{2}R+O(e^{-\frac{R}{2}}))+2\eta\geq |2l+\frac{5R}{2}-\frac{9\epsilon}{R}+O(e^{-\frac{R}{2}})|.
\end{equation}

By considering $y_1'=\gamma_2^{-1}(x)$ and $y_2'=\gamma_1^{-1}(y_1)$ and run the same argument, we have
\begin{equation}\label{11}
(1+\frac{2K_0\epsilon}{R})(\frac{5}{2}R+O(e^{-\frac{R}{2}}))+2\eta \geq |-2l+\frac{5R}{2}-\frac{9\epsilon}{R}+O(e^{-\frac{R}{2}})|.
\end{equation}

By \eqref{10} and \eqref{11} and $\eta=5(\frac{2K\epsilon}{R})^{\frac{1}{5}}$, we get
\begin{equation}
|l|\leq \frac{5}{2}K_0 \epsilon+\frac{9\epsilon}{2R}+\eta+O(e^{-\frac{R}{2}})\leq K_1(\epsilon+\frac{1}{R}+(\frac{\epsilon}{R})^{\frac{1}{5}}).
\end{equation}

Since $l$ is the oriented distance between $\widetilde{g}'(x)$ and $h(x)$, $d(\widetilde{g}'(x),h(x))\leq K_1(\epsilon+\frac{1}{R}+(\frac{\epsilon}{R})^{\frac{1}{5}})$ holds for any $x \in \widetilde{\mathcal{C}} \cap \widetilde{\mathcal{A}}$.

Since $\widetilde{g}':\widetilde{\mathcal{C}}\rightarrow \partial \widetilde{g}(\widetilde{\mathcal{C}})$ is a $(1+\frac{2K_0\epsilon}{R},2\eta)$-quasi-isometry and the restriction of  $h:\widetilde{\mathcal{C}}\rightarrow \partial \widetilde{g}(\widetilde{\mathcal{C}})$ on each single geodesic $C \subset \widetilde{\mathcal{C}}$ is a $(1+\frac{2\epsilon}{K},0)$-quasi-isometry. So $d(\widetilde{g}'(y),h(y))\leq 2\epsilon+2K_0\epsilon+2\eta+K_1(\epsilon+\frac{1}{R}+(\frac{\epsilon}{R})^{\frac{1}{5}})$ for each $y\in \widetilde{\mathcal{C}}$.

So $h|_{\widetilde{\mathcal{C}}}:\widetilde{\mathcal{C}}\rightarrow \partial \widetilde{g}(\widetilde{\mathcal{C}})$ is a $\pi_1(S)$-equivariant $(1+\frac{2K_0\epsilon}{R},K_2(\epsilon+\frac{1}{R}+(\frac{\epsilon}{R})^{\frac{1}{5}}))$-quasi-isometry. Since ${\widetilde{\mathcal{C}}}\subset Y$ is $2e^{-\frac{R}{4}}$-dense, $h:Y\rightarrow Y'$ is a $(1+\frac{2K_0\epsilon}{R},K_3(\epsilon+\frac{1}{R}+(\frac{\epsilon}{R})^{\frac{1}{5}}))$-quasi-isometry. \end{proof}

\subsection{Estimation of the Angle Change}

Now we consider two points $x,y\in Y$ with $x$ lying on a geodesic $\gamma \subset \widetilde{\mathcal{C}}$ and $d(x,y)\geq \frac{R}{2}$. We give $\gamma$ an arbitrary orientation. Let $\alpha \subset \widetilde{\mathcal{A}}$ be the geodesic arc which is at the same side of $\gamma$ as $\overline{xy}$ on $\mathbb{H}^2$, such that it is the closest such geodesic arc to $x$ (it is possible there are two choices and we choose either of them). Let $\vec{e}_1 \in T^1_x(\mathbb{H}^3)$ be the unit vector in the direction of $\gamma$, $\vec{e}_2 \in T^1_x(\mathbb{H}^3)$ be the unit vector in the direction of the parallel transportation of $\alpha$ to $x$, and take the third unit vector $\vec{e}_3 \in T^1_x(\mathbb{H}^3)$ such that $(\vec{e}_1,\vec{e}_2,\vec{e}_3)$ is an orthonormal frame of $T_x(\mathbb{H}^3)$ and gives the orientation of $\mathbb{H}^3$. Let $\vec{e}$ be the unit vector in $T^1_x(\mathbb{H}^3)$ in the direction of $\overline{xy}$.

Now we define $\Theta{(\gamma,\alpha,\overline{xy})}=(\theta,\phi) \in \mathbb{R}^2$ for $\theta=\angle{(\vec{e},\vec{e}_1)}\in [0,\pi]$ and $\phi=\angle{(\vec{e},\vec{e}_3)}\in [0,\pi]$. $\Theta{(\gamma,\alpha,\overline{xy})}$ measures the direction of $\vec{e}$ under the coordinate $(\vec{e}_1,\vec{e}_2,\vec{e}_3)$. Since $\rho_0(\pi_1(S))$ is a Fuchsian group, $\phi=\frac{\pi}{2}$. $\Theta{(h(\gamma),h(\alpha),\overline{h(x)h(y)})}=(\theta',\phi')\in \mathbb{R}^2$ is defined similarly.

The main result of this subsection is the following statement.

\begin{thm}\label{angle}
There exists constants $\hat{\epsilon}>0$, $\hat{R}>0$ depend only on $p$, such that for any $0<\epsilon<\hat{\epsilon}$ and $R>\hat{R}$, $|\Theta{(\gamma,\alpha,\overline{xy})}-\Theta{(h(\gamma),h(\alpha),\overline{h(x)h(y)})}|\leq \frac{1}{p}$.
\end{thm}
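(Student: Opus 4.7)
The plan is to bound $|\Theta-\Theta'|$ by introducing an intermediate angle pair $\Theta_g$ computed via the conjugating quasi-isometry $\widetilde g:\mathbb H^3\to\mathbb H^3$ of Corollary \ref{quasiisometry}, namely the angle pair at $\widetilde g(x)$ determined by $\widetilde g(\gamma)$, the parallel transport of $\widetilde g(\alpha)$, and the geodesic segment $\overline{\widetilde g(x)\widetilde g(y)}$. I would establish separately that $|\Theta-\Theta_g|=O(\epsilon/R)$ using quasiconformality of $\partial\widetilde g$ on $S^2_\infty$, and $|\Theta_g-\Theta'|=O((\epsilon+1/R)^{1/5}e^{-R/2})$ using the pointwise closeness of $h$ and $\widetilde g$ on $Y$ recorded in the proof of Theorem \ref{quasi}; combining these gives $|\Theta-\Theta'|\leq 1/p$ once $\hat\epsilon$ is sufficiently small and $\hat R$ sufficiently large in terms of $p$.

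For the first comparison, Theorem \ref{KMinj} provides that $\partial\widetilde g$ is $(1+K_0\epsilon/R)$-quasiconformal on $S^2_\infty$. For any $p\in\mathbb H^3$, the exponential map identifies the unit tangent sphere $T^1_p$ conformally with $S^2_\infty$ (a base-point change on the visual sphere only precomposes with a M\"obius transformation), so the visual map $T^1_x\to T^1_{\widetilde g(x)}$ sending a unit vector $\vec v$ to the initial tangent of the geodesic ray from $\widetilde g(x)$ ending at $\partial\widetilde g$ applied to the ideal endpoint of $\vec v$ is quasiconformal with the same constant. Hence angles at $x$ between the ideal endpoints of geodesic rays are distorted by at most $1+O(\epsilon/R)$. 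To conclude $|\Theta-\Theta_g|=O(\epsilon/R)$ I would apply the Morse lemma to the geodesic segment $\overline{xy}$ and its quasi-geodesic image $\widetilde g(\overline{xy})$: these fellow-travel and share ideal endpoints approximately, so that the tangent at $\widetilde g(x)$ of the true geodesic $\overline{\widetilde g(x)\widetilde g(y)}$ differs from the visual image of the tangent at $x$ by $O(\epsilon/R)$, and similarly for $\widetilde g(\gamma)$ and $\widetilde g(\alpha)$.

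For the second comparison, I would use the pointwise $O((\epsilon+1/R)^{1/5})$-closeness of $h$ and $\widetilde g$ on $Y$. All geodesic segments defining $\Theta'$ have length $\geq R/2+o(1)$, since $\bold{hl}(C)\approx R/2$ by the modified Kahn--Markovic conditions and $d(x,y)\geq R/2$. Elementary hyperbolic trigonometry in a long thin triangle shows that perturbing one endpoint of such a segment by $\eta$ in $\mathbb H^3$ rotates its direction at the other endpoint by $O(\eta/\sinh(R/2))=O(\eta e^{-R/2})$. With $\eta=O((\epsilon+1/R)^{1/5})$, both the frame $(\vec e_1',\vec e_2',\vec e_3')$ at $h(x)$ and the direction $\vec e'$ deviate from their $\widetilde g$-counterparts by at most $O((\epsilon+1/R)^{1/5}e^{-R/2})$, yielding the desired bound.

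The hardest part is the first comparison: making rigorous the passage from the boundary quasi-conformal distortion to the actual angular distortion of interior geodesics, and supplying the Morse lemma quantitatively enough that the fellow-traveling error at $\widetilde g(x)$ decays fast enough in $R/2$ to be absorbed into the $O(\epsilon/R)$ bound. Care is needed to check that for $(1+\delta)$-quasiconformal maps with $\delta$ small, the angular distortion between pairs of points on the visual sphere (not merely infinitesimally) is indeed $O(\delta)$; this follows from standard measurable Riemann mapping-type estimates but requires a careful formulation.
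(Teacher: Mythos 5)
Your proposal takes a genuinely different route from the paper, so it is worth comparing in detail. The paper never invokes the conjugating quasi-isometry $\widetilde g$ directly for the angle estimate. Instead it first \emph{localizes}: it replaces $y$ by a point $x'$ on $\widetilde{\mathcal C}$ with $l(p)\le d(x,x')\le R$ (where $l(p)=1/(10000p^2)$), shows via Lemma~\ref{delta} that $\angle h(x')h(x)h(y)$ is negligible, and then estimates the two coordinates of $\Theta$ separately by elementary hyperbolic trigonometry at this bounded scale: Proposition~\ref{theta} handles $\theta$ by comparing $\sinh$-ratios, and Proposition~\ref{phi} handles the plane-normal coordinate $\phi$ via the local bending estimate Lemma~\ref{twoplanes} together with an induction along the roughly $500\,Rl(p)$ intersection points of $\overline{xx'}$ with the tessellation. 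Your approach, by contrast, compares $\Theta$ and $\Theta'$ through an intermediate $\Theta_g$ defined via $\widetilde g$, and delegates the heavy lifting to quasiconformality of $\partial\widetilde g$ on $S^2_\infty$.

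There are several genuine gaps. First, the claim that $(1+\delta)$-quasiconformality of $\partial\widetilde g$ gives $O(\delta)$ distortion of \emph{macroscopic} visual angles at $x$ is not true as stated: a $(1+\delta)$-quasiconformal self-map of $S^2$ is only $O(\delta)$-close to \emph{some} M\"obius transformation, and M\"obius transformations heavily distort visual angles. To salvage this one must argue through Gromov products $(p\mid q)_x=-\log\sin(\theta_x(p,q)/2)$, where a $(1+\delta,0)$-quasi-isometry multiplies them by $(1\pm\delta)$ up to an additive $O(\delta^{1/5})$ Morse constant. This gives a bound of roughly $O(\delta\log(1/\theta)+\delta^{1/5})$ on the change in $\theta$; since $\theta$ can be as small as $e^{-R/4}$ this is $O(\epsilon+\delta^{1/5})$, not $O(\epsilon/R)$, and it requires the explicit Gromov-product calculation you have not supplied.

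Second, your estimate $O(\eta e^{-R/2})$ for the second comparison is incorrect: you only account for perturbing the \emph{far} endpoint of $\overline{h(x)h(y)}$, but $h(x)$ and $\widetilde g(x)$ are also $\eta$-apart, and moving the \emph{near} endpoint (where the angle is measured) by $\eta$ changes the direction by $O(\eta)$, not $O(\eta e^{-R/2})$. The repaired bound $O((\epsilon+1/R)^{1/5})$ still tends to zero, but the exponential gain you claim is spurious.

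Third, and most seriously, your argument does not actually address the coordinate $\phi$. The frame vectors $\vec e_2,\vec e_3$ are defined via the seam $\alpha$, which has length $O(e^{-R/4})$; these are not determined by long geodesic segments, and neither the ``long thin triangle'' argument nor the quasiconformal visual estimate controls how the \emph{plane} spanned by $h(\gamma)$ and $h(\alpha)$ rotates relative to the flat model. The paper's Proposition~\ref{phi} — the most intricate part of the proof, involving Lemma~\ref{twoplanes}, an equivariant tracking of plane normals via parallel transport, and the two Claims proved by induction — has no counterpart in your sketch. Without it, your proposal establishes (modulo the issues above) only the $\theta$-coordinate bound of Theorem~\ref{angle}, not the full statement.
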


Since each hexagon in $\mathbb{H}^2\setminus Y$ has diameter $\leq \frac{R}{2}+1$, $\overline{xy}\cap Y=\{x_0,x_1,\cdots,x_n\}$ for $x=x_0$, $y=x_n$ and $d(x_i,x_{i+1})\leq \frac{R}{2}+1$. Let $h(\overline{xy})$ denote the piecewise geodesic in $\mathbb{H}^3$ which is the concatenation of $\overline{h(x_i)h(x_{i+1})}$. There is a natural piecewise linear map $h':\overline{xy}\rightarrow h(\overline{xy})$ such that $h'(x_i)=h(x_i)$. Since $h:Y \rightarrow Y'$ is a $(1+\mu_1,\mu_2)$-quasi-isometry for $\mu_1=\frac{K\epsilon}{R}$, and $d(x_i,x_{i+1})\leq \frac{R}{2}+1$, it is easy to check that $h'$ is a $(1+\mu_1,3\mu_2+4K\epsilon)$-quasi-isometry. Since $\mu_2=K(\epsilon+\frac{1}{R})^{\frac{1}{5}}$, $h'$ is a $(1+\mu_1,10\mu_2)$-quasi-isometry in particular.

Let $x_k$ be the point in $\{x_0,\cdots,x_n\}$ which is the nearest one to $x$ and such that
\begin{equation}\label{lp}
l(p) \leq d(x,x_k)\leq R
\end{equation}
holds for $l(p)=\frac{1}{10000p^2}$.
If $x_k\in \alpha \subset \widetilde{\mathcal{A}}$, let $x'$ be one of the intersection points of $\alpha \cap \widetilde{\mathcal{C}}$. If $x_k$ does not lie in $\widetilde{\mathcal{A}}$, simply let $x'=x_k$. By Lemma \ref{delta} and the choice of $x'$, we have $d(h(x'),\overline{h(x)h(y)})\leq 5(\mu_2)^{\frac{1}{5}}+2e^{-\frac{R}{4}}\leq 6(\mu_2)^{\frac{1}{5}}$. Since $d(h(x),h(x'))\geq \frac{l(p)}{1+\mu_1}-\mu_2- 2e^{-\frac{R}{4}}\geq \frac{l(p)}{2}$, by the hyperbolic sine law,
\begin{equation}
\sin{(\angle{h(x')h(x)h(y)})}=\frac{\sinh{(d(h(x'),\overline{h(x)h(y)}))}}{\sinh{(d(h(x),h(x')))}}\leq \frac{\sinh{6(\mu_2)^{\frac{1}{5}}}}{\sinh{\frac{l(p)}{2}}}\leq \frac{15(\mu_2)^{\frac{1}{5}}}{l(p)}.
\end{equation}

So $\angle{h(x')h(x)h(y)}\leq \frac{20(\mu_2)^{\frac{1}{5}}}{l(p)}$, and the same argument shows $\angle{x'xy}\leq \frac{20(\mu_2)^{\frac{1}{5}}}{l(p)}$. Now it suffices to show that $|\Theta{(\gamma,\alpha,\overline{xx'})}-\Theta{(h(\gamma),h(\alpha),\overline{h(x)h(x')})}|\leq \frac{1}{2p}$. Even if $x'$ may not be same with $x_k$, we will abuse the notation and still use $x_0=x,x_1,\cdots,x_k=x'$ to denote the intersection points in $\overline{xx'}\cap Y$. Moreover, we still use the notation $\Theta{(\gamma,\alpha,\overline{xx'})}=(\theta,\phi)$ and $\Theta{(h(\gamma),h(\alpha),\overline{h(x)h(x')})}=(\theta',\phi')$, with $\phi=\frac{\pi}{2}$.

\begin{prop}\label{theta}
Let $\theta$ and $\theta'$ be the first coordinate of $\Theta{(\gamma,\alpha,\overline{xx'})}$ and \\$\Theta{(h(\gamma),h(\alpha),\overline{h(x)h(x')})}$ respectively, then $|\theta-\theta'|\leq\frac{1}{4p}$.
\end{prop}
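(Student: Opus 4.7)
The plan is to convert the angle comparison into a distance comparison via the hyperbolic sine rule, and then control the resulting distances using the quasi-isometry of Theorem~\ref{quasi} together with its global companion $\widetilde{g}$ from Corollary~\ref{quasiisometry}. Since $\phi = \pi/2$, the geodesic $\overline{xx'}$ lies in the same $\mathbb{H}^2$-plane as $\gamma$. Let $w \in \gamma$ be the perpendicular foot of $x'$ on $\gamma$; the right hyperbolic triangle with vertices $x, w, x'$ (right angle at $w$) yields
\[
\sin\theta \;=\; \frac{\sinh d(x',\gamma)}{\sinh d(x,x')}.
\]
Let $w' \in h(\gamma)$ be the perpendicular foot of $h(x')$ on $h(\gamma)$ in $\mathbb{H}^3$; the same identity, applied in the hyperbolic plane spanned by $h(x')$ and $h(\gamma)$, gives
\[
\sin\theta' \;=\; \frac{\sinh d(h(x'),h(\gamma))}{\sinh d(h(x),h(x'))}.
\]

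Set $\mu_1 = K\epsilon/R$ and $\mu_2 = K(\epsilon + 1/R)^{1/5}$. The global quasi-isometry $\widetilde{g}$ satisfies $d(\widetilde{g}(y),\widetilde{g}(z)) = (1 + O(\mu_1))\, d(y,z)$ for all $y,z \in \mathbb{H}^3$ (no additive term), while the proof of Theorem~\ref{quasi} shows that $d(\widetilde{g}(y), h(y)) = O(\mu_2)$ for every $y \in Y$. Combining these, and noting via Lemma~\ref{delta} that $h(\gamma)$ lies within Hausdorff distance $O(\mu_2)$ of the quasigeodesic $\widetilde{g}(\gamma)$, one obtains
\begin{align*}
d(h(x),h(x')) &= (1+O(\mu_1))\, d(x,x') \pm O(\mu_2),\\
d(h(x'),h(\gamma)) &= (1+O(\mu_1))\, d(x',\gamma) \pm O(\mu_2).
\end{align*}
Since $l(p)/2 \leq d(x,x') \leq R + O(e^{-R/4})$, the two denominators in the sine-rule expressions are comparable. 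If $d(x',\gamma) \geq \sqrt{\mu_2}$, then the additive error $O(\mu_2)$ in the numerator is at most a multiplicative perturbation $1 + O(\sqrt{\mu_2})$ of $\sinh d(x',\gamma)$, so $|\sin\theta'/\sin\theta - 1| = O(\mu_1 + \sqrt{\mu_2})$, and hence $|\theta - \theta'| \leq 1/(4p)$ for $\hat\epsilon$ small and $\hat R$ large in terms of $p$. If instead $d(x',\gamma) < \sqrt{\mu_2}$, then $\sin\theta \leq \sqrt{\mu_2}/\sinh(l(p)/2) = O(\sqrt{\mu_2}/l(p))$, and the estimate $d(h(x'),h(\gamma)) = O(\sqrt{\mu_2})$ similarly gives $\sin\theta' = O(\sqrt{\mu_2}/l(p))$; both angles fall below $1/(8p)$ once $\mu_2 < c\, p^{-6}$ for a suitable universal constant $c$, and again $|\theta - \theta'| \leq 1/(4p)$.

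The delicate regime is the small-distance one, where the additive quasi-isometry error $\mu_2$ can dominate $d(x',\gamma)$ itself, so that the quasi-isometry cannot by itself distinguish the true perpendicular distance from zero. The dichotomy at the threshold $\sqrt{\mu_2}$ resolves this, but it requires carefully matching the threshold against $l(p) = 1/(10000p^2)$ and checking that the two perpendicular feet $h(w)$ and $w'$ on $h(\gamma)$ stay within $O(\mu_2)$ of each other so that a hyperbolic Pythagorean identity transfers the comparison to the genuine perpendicular distance $d(h(x'),h(\gamma))$. Tuning $\hat\epsilon$ small and $\hat R$ large enough that $\mu_2 \ll 1/p^6$ then closes the argument.
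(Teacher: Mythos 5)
Your overall strategy — use the hyperbolic sine rule to express $\sin\theta$ and $\sin\theta'$ as ratios of $\sinh$ of distances, then control those distances via the quasi-isometry — is the same as the paper's. However, there is a genuine gap in the final step where you pass from a comparison of $\sin\theta$ and $\sin\theta'$ to a comparison of $\theta$ and $\theta'$ themselves. Since $\theta,\theta'\in[0,\pi]$, the sine function is not injective on this range: $\sin\theta\approx\sin\theta'$ allows, for instance, $\theta\approx\pi/3$ and $\theta'\approx 2\pi/3$, or $\theta\approx 0$ and $\theta'\approx \pi$. In your large-distance case you conclude $|\theta-\theta'|\le 1/(4p)$ from $|\sin\theta'/\sin\theta-1|$ being small, and in your small-distance case you conclude both angles are below $1/(8p)$ from both sines being small. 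Neither step is valid as stated; the small-sine case in particular is exactly where the mirror ambiguity ($\theta\approx 0$, $\theta'\approx\pi$) is most plausible.

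The paper resolves this by a different case split: assume WLOG that $\theta$ is acute, and branch on whether $\theta'$ is acute or obtuse. In the acute--acute case the identity $\sin\theta-\sin\theta' = 2\sin\frac{|\theta-\theta'|}{2}\cos\frac{\theta+\theta'}{2}$ (together with $\max(\theta,\theta')\le\pi/2$, which makes $\cos\frac{\theta+\theta'}{2}\ge\sin\frac{|\theta-\theta'|}{2}$) converts the sine estimate into $|\theta-\theta'|\le\sqrt{8\nu}$. In the acute--obtuse case, the paper argues geometrically: since $\theta$ is acute, the foot of $x'$ on $\gamma$ lies on the forward ray $\beta$, so $d(x',\gamma)=d(x',\beta)$; and since $\theta'$ is obtuse, the nearest point of $h(\beta)$ to $h(x')$ is the endpoint $h(x)$, giving $d(h(x'),h(\beta))=d(h(x'),h(x))$. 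Feeding this through the quasi-isometry forces $d(x',\gamma)$ to be nearly as large as $d(x',x)$, hence both $\theta$ and $\theta'$ are within $1/(8p)$ of $\pi/2$. This argument is what rules out the mirror ambiguity and is exactly what your proof is missing; to repair it you would need to add some analogous argument showing that the forward/backward orientation along $\gamma$ is preserved by $h$, or else reproduce the paper's acute/obtuse dichotomy.
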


\begin{proof}
Let $d_1=d(x,x')$, $d_2=d(x',\gamma)$ and $d_1'=d(h(x),h(x'))$, $d_2'=d(h(x'),h(\gamma))$, then $d_2\leq d_1$ and $d_2'\leq d_1'$. Since $h:Y\rightarrow Y'$ is a $(1+\mu_1,\mu_2)$-quasi-isometry, and $l(p)\leq d_1 \leq R$, we have $|d_1'-d_1|\leq K\epsilon+\mu_2$ and $|d_2'-d_2|\leq K\epsilon+\mu_2$. In particular, $d_1'\geq \frac{1}{2}l(p)$.

Since $\sin{\theta}=\frac{\sinh{d_2}}{\sinh{d_1}}$ and $\sin{\theta'}=\frac{\sinh{d_2'}}{\sinh{d_1'}}$, we have the following estimation:
\begin{equation}
\begin{split}
& |\sin{\theta}-\sin{\theta'}|\\
\leq & \frac{\sinh{d_2}\cdot|\sinh{d_1}-\sinh{d_1'}|+\sinh{d_1}\cdot|\sinh{d_2}-\sinh{d_2'}|}{\sinh{d_1}\sinh{d_1'}}\\
\leq & \frac{|d_1-d_1'|\cdot \cosh{(\max{(d_1,d_1')})}+|d_2-d_2'|\cdot \cosh{(\max{(d_2,d_2')})}}{\sinh{d_1'}} \\
\leq & 2(K\epsilon+\mu_2)\frac{\cosh{(\max{(d_1,d_1')})}}{\sinh{d_1'}}\\
\leq & 2(K\epsilon+\mu_2)e^{K\epsilon+\mu_2}\coth{d_1'}\\
\leq & 2(K\epsilon+\mu_2)e^{K\epsilon+\mu_2}\coth{(\frac{1}{2}l(p))}.
\end{split}
\end{equation}

Let $\nu=2(K\epsilon+\mu_2)e^{K\epsilon+\mu_2}\coth{(\frac{1}{2}l(p))}$. If both $\theta$ and $\theta'$ are acute angles, then
\begin{equation}
\nu\geq |\sin{\theta}-\sin{\theta'}| =2\sin{\frac{|\theta-\theta'|}{2}}\cos{\frac{\theta+\theta'}{2}}\geq 2\sin^2{\frac{|\theta-\theta'|}{2}}\geq \frac{(\theta-\theta')^2}{8}.
\end{equation}
So in this case $|\theta-\theta'|\leq\sqrt{8\nu}$.

Without lose of generality, we can suppose that $\theta$ is an acute angle. If $\theta'$ is also acute, the above inequality gives $|\theta-\theta'|\leq\sqrt{8\nu}\leq \frac{1}{4p}$, so the lemma is proved.

If $\theta'$ is not acute, we will show that $\theta$ is very close to $\pi/2$. Let $\beta$ be the subray of $\gamma$ starting at $x$ and has acute angle with $\overline{xx'}$. In this case, the angle between $\overline{h(x)h(x')}$ and $h(\beta)$ is an obtuse angle, so $d(h(x'),h(\beta))=d(h(x'),h(x))$. By this geometric observation, we have:
\begin{equation}
d(x',\gamma)=d(x',\beta)\geq \frac{d(h(x'),h(\beta))}{1+\mu_1}-\mu_2=\frac{d(h(x'),h(x))}{1+\mu_1}-\mu_2 \geq \frac{d(x',x)}{(1+\mu_1)^2}-2\mu_2.
\end{equation}

Since $d(x',x)\leq R$ and $\mu_1=\frac{K\epsilon}{R}$, $d(x',\gamma)\geq\frac{d(x',x)}{(1+\mu_1)^2}-2\mu_2\geq d(x',x)-(3K\epsilon+2\mu_2)$. So
\begin{equation}
\begin{split}
\sin{\theta} & =\frac{\sinh{d(x',\gamma)}}{\sinh{d(x',x)}}\geq \frac{\sinh{(d(x',x)-(3K\epsilon+2\mu_2))}}{\sinh{(d(x',x))}}\\
& \geq \frac{\sinh{(l(p)-(3K\epsilon+2\mu_2))}}{\sinh{(l(p))}} \geq 1-(3K\epsilon+2\mu_2)\coth{(l(p))}.
\end{split}
\end{equation}

So $\frac{\pi}{2}-\theta\leq 2\sqrt{(3K\epsilon+2\mu_2)\coth{l(p)}}\leq \frac{1}{8p}$, and the same argument shows that $\theta'-\frac{\pi}{2}\leq \frac{1}{8p}$. So we have $|\theta'-\theta|\leq \frac{1}{4p}$.
\end{proof}

Since the second coordinate of $\Theta{(\gamma,\alpha,\overline{xx'})}=(\theta,\phi)$ is $\phi=\frac{\pi}{2}$, we need only to estimate the second coordinate $\phi'$ of $\Theta{(h(\gamma),h(\alpha),\overline{h(x)h(x')})}=(\theta',\phi')$.

\begin{prop}\label{phi}
For small enough $\epsilon>0$ and large enough $R$, we have $|\phi'-\frac{\pi}{2}|\leq \frac{1}{4p}$.
\end{prop}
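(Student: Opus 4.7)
The strategy follows the pattern of Proposition \ref{theta}, but we must now control the out-of-plane drift of $h(x')$ relative to the unique totally geodesic plane $P' \subset \mathbb{H}^3$ containing both $h(\gamma)$ and $h(\alpha)$ (equivalently, the plane through $h(x)$ spanned by $\vec{e}_1'$ and $\vec{e}_2'$). By the hyperbolic trigonometry of the right triangle with hypotenuse $\overline{h(x)h(x')}$ and one leg lying in $P'$ and the other perpendicular to $P'$, one has
$$|\cos\phi'| = \frac{\sinh d(h(x'), P')}{\sinh d(h(x), h(x'))}.$$
Since $d(h(x), h(x')) \ge l(p)/2$ from the choice of $x'$, it suffices to show that $d(h(x'), P') \le C_p(\epsilon+1/R)^{1/5}$ for a constant $C_p$ depending only on $p$; this yields $|\cos\phi'| \le \sin(1/(4p))$ once $\epsilon$ is small and $R$ large enough in terms of $p$, giving $|\phi'-\pi/2| \le 1/(4p)$.

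To estimate $d(h(x'), P')$ I plan to invoke the equivariant global quasi-isometry $\widetilde{g}: \mathbb{H}^3 \to \mathbb{H}^3$ from Corollary \ref{quasiisometry}, with multiplicative constant $1+2K_0\epsilon/R$ and additive constant $0$, which conjugates $\rho_0$ to $\rho$. The analysis in Section \ref{piecewise} (comparing $\widetilde{g}'$ and $h$ on $\widetilde{\mathcal{C}}$) already gives $d(\widetilde{g}(y), h(y)) \le K_2(\epsilon+1/R+(\epsilon/R)^{1/5})$ for all $y \in \widetilde{\mathcal{C}}$, and in particular for $y=x$ and $y=x'$. Thus it is enough to bound $d(\widetilde{g}(x'), P')$ by $O((\epsilon/R)^{1/5})$.

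The point $\widetilde{g}(x')$ lies in the quasi-plane $\widetilde{g}(\mathbb{H}^2)$, while $\widetilde{g}(x)$ itself is $O((\epsilon/R)^{1/5})$-close to $P'$. Applying Lemma \ref{delta} to the $(1+2K_0\epsilon/R,0)$-quasi-geodesic $\widetilde{g}$-images of $\gamma$ and $\alpha$ (which share endpoints with $h(\gamma)$ and $h(\alpha)$ via $\partial \widetilde{g}$) shows $\widetilde{g}(\gamma) \subset N_\eta(h(\gamma))$ and $\widetilde{g}(\alpha) \subset N_\eta(h(\alpha))$ with $\eta=5(2K_0\epsilon/R)^{1/5}$, so the two transverse tangent directions of $\widetilde{g}(\mathbb{H}^2)$ at $\widetilde{g}(x)$ arising from these two curves are $O((\epsilon/R)^{1/5})$-close in angle to directions spanning $P'$ at $h(x)$. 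Applying Lemma \ref{delta} once more to the geodesic segment $\overline{xx'}\subset\mathbb{H}^2$ of length at most $R$, the arc $\widetilde{g}(\overline{xx'})$ lies in the $\eta$-neighborhood of $\overline{\widetilde{g}(x)\widetilde{g}(x')}$. Decomposing this latter geodesic at $\widetilde{g}(x)$ into its component tangent to $P'$ and its component perpendicular to $P'$, and applying the same type of right-angled hexagonal trigonometry used in Proposition \ref{theta} -- now tracking the normal coordinate rather than the tangential one -- yields $d(\widetilde{g}(x'), P') \le C(\epsilon/R)^{1/5}$ for a universal constant $C$.

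The principal obstacle is keeping the transverse drift under control on the scale $d(x,x') \le R$: a hexagon-by-hexagon summation of the bending errors arising from $|\bold{hl}(C)-R/2|<\epsilon/R$ and $|s(C)-1|<\epsilon/R^2$ would be insufficient, since $\overline{xx'}$ can meet as many as $Re^{R/4}$ hexagons of $Y$. The uniform Morse estimate of Lemma \ref{delta} bypasses this: its bound $5\mu^{1/5}$ is independent of the length of the quasi-geodesic and of the scale, so the multiplicative constant $2K_0\epsilon/R$ of $\widetilde{g}$ yields an $O((\epsilon/R)^{1/5})$ deviation uniformly. Combining this with the trigonometric identity for $|\cos\phi'|$ and choosing $\hat{\epsilon}$ small and $\hat{R}$ large in terms of $p$ completes the proof.
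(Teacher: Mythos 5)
Your reduction via the right-triangle identity $|\cos\phi'| = \sinh d(h(x'),P')/\sinh d(h(x),h(x'))$ is correct and matches the paper's final step in spirit, but the core of your argument — bounding $d(\widetilde{g}(x'),P')$ — has a genuine gap, and you have also misread the setup in a way that leads you to dismiss the paper's actual (and valid) route.

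The paper controls $d(h(x'),P_0)$ (its $P_0$ is your $P'$) by a hexagon-by-hexagon accumulation of normal drift, using the per-hexagon dihedral-angle estimate of Lemma~\ref{twoplanes} together with the two inductive Claims. This is feasible precisely because of how $x'$ is chosen: $x'$ is the \emph{first} intersection point of $\overline{xy}\cap Y$ at distance $\ge l(p)=\tfrac{1}{10000p^2}$ from $x$, so all intermediate crossings lie within $l(p)$ of $x$, and by Lemma~2.3 of \cite{KM1} there are only $k\le 500R\,l(p)$ of them — a quantity whose product with the per-hexagon error $\sim \epsilon/R$ is a controllable $O(\epsilon\, l(p))$. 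Your claim that $\overline{xx'}$ can meet $Re^{R/4}$ hexagons is a miscount: that figure is relevant to the full segment $\overline{xy}$, not to $\overline{xx'}$. So the summation you rule out is in fact the proof.

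The alternative route you sketch does not close. Lemma~\ref{delta} applied to $\widetilde{g}|_{\overline{xx'}}$ only says $\widetilde{g}(\overline{xx'})$ stays within $\eta$ of the geodesic $\overline{\widetilde{g}(x)\widetilde{g}(x')}$ — a statement about the Hausdorff distance from a quasi-geodesic to the geodesic joining \emph{its own endpoints}; it says nothing about how far that geodesic's far endpoint sits from $P'$. The concluding step, ``decomposing into components tangent and perpendicular to $P'$ and applying the same trigonometry as Proposition~\ref{theta},'' fails because the mechanism of Proposition~\ref{theta} does not transfer: there, the quantity $\sin\theta=\sinh d(x',\gamma)/\sinh d(x,x')$ has a nonzero reference value in the $\rho_0$-model which the quasi-isometry $h$ is compared against, and $|d_2'-d_2|\le K\epsilon+\mu_2$ does the work. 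For $\phi'$, the corresponding reference quantity is $d(x',\mathbb{H}^2)=0$, so the quasi-isometry comparison yields nothing about the normal coordinate. A small initial transverse angle at $\widetilde{g}(x)$ can in principle produce a transverse drift growing like $\sinh$ over the length-$R$ segment, and no Morse-type bound on a $(1+O(\epsilon/R),0)$-quasi-geodesic rules this out. To rescue your outline one would need an additional input — for instance a quantitative convex-hull thinness theorem for $(1+O(\epsilon/R))$-quasicircles bounding the quasi-plane $\widetilde{g}(\mathbb{H}^2)$ within $O((\epsilon/R)^a)$ of a genuine plane, and then an argument identifying that plane with $P'$ — none of which you invoke. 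As written, the proposal does not establish the needed bound on $d(\widetilde{g}(x'),P')$.
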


We first need the following local estimation.

\begin{lem}\label{twoplanes}
If $\alpha \subset \partial \widetilde{g}(\widetilde{\mathcal{A}})$ is the common perpendicular of geodesics $\gamma_1,\gamma_2 \subset \partial \widetilde{g}(\widetilde{\mathcal{C}})$ such that $z_i=\gamma_i\cap \alpha$, and $y\in \gamma_1$ is a point on $\gamma_1$ such that $d=d(y,z_1)\leq R$. Let $P_1$ be the hyperbolic plane containing $\gamma_1$ and $z_2$, $P_2$ be the hyperbolic plane containing $\gamma_2$ and $y$, and $\psi$ be the angle between $P_1$ and $P_2$, then $\psi\leq \frac{10\epsilon}{R}$.
\end{lem}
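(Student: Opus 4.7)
The plan is to reduce Lemma \ref{twoplanes} to an explicit calculation in the upper half-space model, with all estimates traced back to a single twist parameter controlled by the right-angled hexagon cosine law. In the Fuchsian model $\rho_0$, the three geodesics $\gamma_1,\gamma_2,\alpha$ are coplanar and $P_1=P_2$, so $\psi$ measures precisely the failure of coplanarity; this failure is governed by the imaginary part of $\bold{d}_\alpha(\gamma_1,\gamma_2)$, which is small because the modified Kahn--Markovic condition $|\bold{hl}(C)-R/2|<\epsilon/R$ on the three cuffs of the skew pair of pants containing $\alpha$ forces the complex seam length to be almost real.

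First I would normalize by an isometry so that $\alpha$ is the vertical geodesic from $z_1=(0,0,1)$ to $z_2=(0,0,e^l)$ with $l=\Re\bold{d}_\alpha(\gamma_1,\gamma_2)$, and so that $\gamma_1$ is the Euclidean semicircle through $z_1$ in the $xz$-plane. Then $P_1$ is the $xz$-half-plane and $y=(\tanh d,0,\mathrm{sech}\,d)$ lies in $P_1$. Since parallel transport along $\alpha$ preserves the horizontal direction, $\gamma_2$ is the corresponding geodesic through $z_2$ in the horizontal direction $(\cos\theta,\sin\theta,0)$, where $\theta$ denotes the twist (the deviation of $\Im\bold{d}_\alpha(\gamma_1,\gamma_2)$ from its Fuchsian value). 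Solving the three hemisphere equations for $P_2$ identifies it with center $(a,b,0)$ and Euclidean radius $r$ where $a=(1-e^{2l})/(2\tanh d)$, $b=-a\cot\theta$, and $r^2=e^{2l}+a^2/\sin^2\theta$; computing the angle between the Euclidean normals of $P_1$ and $P_2$ at any point on their intersection yields
\begin{equation*}
\sin^2\psi \;=\; \frac{(a^2+e^{2l})\sin^2\theta}{a^2+e^{2l}\sin^2\theta}.
\end{equation*}

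Second, I would bound $\theta$ using the hypothesis. Since $\alpha$ is a seam of a skew pair of pants whose three cuff half-lengths $a_1,a_2,a_3$ satisfy $|a_i-R/2|<\epsilon/R$, the complex right-angled hexagon cosine law
\begin{equation*}
\cosh b_\alpha=\frac{\cosh a_i\cosh a_j+\cosh a_k}{\sinh a_i\sinh a_j}
\end{equation*}
expresses the complex length $b_\alpha=l+i\theta$ of $\alpha$ in terms of the $a_i$. A direct differentiation shows that at the symmetric value $a_i=R/2$ each of the three partials of $\cosh b_\alpha$ is of order $e^{-R/2}$, so $\cosh b_\alpha=1+2e^{-R/2}+O(\epsilon e^{-R/2}/R)$. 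Inverting via $\cosh^{-1}(1+x)\sim\sqrt{2x}$ then gives $b_\alpha=2e^{-R/4}+O(\epsilon e^{-R/4}/R)$, so $l=2e^{-R/4}(1+O(\epsilon/R))$ and $|\theta|\leq K_1 \epsilon e^{-R/4}/R$ for a universal constant $K_1$.

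Combining, the hypotheses $d\leq R$ and $\tanh d\leq 1$ together with $a^2=e^{2l}\sinh^2 l/\tanh^2 d$ imply that $e^{2l}\sin^2\theta/a^2=O(\theta^2\tanh^2 d/l^2)=O(\epsilon^2/R^2)$ is negligible compared to $1$, so the expression for $\sin^2\psi$ simplifies to $\sin^2\psi\approx\sin^2\theta\cdot(1+e^{2l}/a^2)\leq|\theta|^2\tanh^2 d/l^2\cdot(1+o(1))\leq K_1^2\epsilon^2/(4R^2)(1+o(1))$, yielding $\psi\leq 10\epsilon/R$ once $\epsilon$ and $1/R$ are small enough. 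The main technical obstacle is the second step: one must verify carefully that each of the three partials of the hexagon cosine law has order exactly $e^{-R/2}$ at the symmetric point (and not, say, $e^{-R/4}$, which would ruin the final estimate) and that the complex perturbation propagates through the relevant branch of $\cosh^{-1}$ without loss, so that the bound on $\theta$ retains both the factor $\epsilon/R$ and the decay factor $e^{-R/4}$.
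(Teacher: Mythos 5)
Your proposal is correct and follows essentially the same strategy as the paper's proof: bound the twist (the imaginary part of the complex seam length $\bold{d}_\alpha(\gamma_1,\gamma_2)$) via the right-angled hexagon cosine law, express the dihedral angle $\psi$ in terms of the twist and the distance $d$, and combine. You derive the formula for $\sin^2\psi$ by an explicit upper half-space hemisphere computation, whereas the paper phrases the same relation through the angle $\beta=\angle z_1z_2y$ as $\sin^2\psi=\sin^2\xi/(1-\sin^2\beta\cos^2\xi)$; the two expressions are algebraically identical once one expands $a^2=e^{2l}\sinh^2 l\,\cosh^2 d/\sinh^2 d$, so the only difference is presentational.
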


\begin{proof}
Let $\beta=\angle{z_1z_2y}$, and $b+i\xi=\bold{d}_{\alpha}(\gamma_1,\gamma_2)$. Here we choose orientations for $\alpha, \gamma_1, \gamma_2$ such that $b>0$ and $\xi$ is close to $0$.

By the hyperbolic cosine law of right-angled hexagons,
\begin{equation}
\bold{d}_{\alpha}(\gamma_1,\gamma_2)=b+i\xi=2e^{-\frac{R}{4}+\frac{\epsilon_3-\epsilon_2-\epsilon_1}{2R}}+O(e^{-\frac{3R}{4}})+O(e^{-\frac{3R}{4}})\frac{\epsilon}{R}i,
\end{equation}
here the two $O(e^{-\frac{3R}{4}})$s are two different real functions. So $b\geq e^{-\frac{R}{4}}$, and $|\xi|\leq \frac{4\epsilon}{R}e^{-\frac{R}{4}}$.

An elementary computation gives $\cos{\psi}=\frac{\cos{\beta}\cos{\xi}}{\sqrt{1-\sin^2{\beta}\cos^2{\xi}}}$, and
\begin{equation}\label{19}
\sin^2{\psi}=\frac{\sin^2{\xi}}{1-\sin^2{\beta}\cos^2{\xi}}.
\end{equation}

For the angle $\beta$, we have the following estimation:
\small{\begin{equation}\label{20}
\sin^2{\beta}=\frac{\sinh^2{d}}{\sinh^2{d(y,z_2)}}=\frac{\cosh^2{d}-1}{\cosh^2{d}\cosh^2{b}-1}\leq \frac{\cosh^2{R}-1}{\cosh^2{R}\cosh^2{(e^{-\frac{R}{4}})}-1}=1-e^{-\frac{R}{2}}+O(e^{-R}).
\end{equation}}

By \eqref{19} and \eqref{20}
\small{\begin{equation}
\sin^2{\psi}\leq \frac{\sin^2{\xi}}{1-(1-e^{-\frac{R}{2}}+O(e^{-R}))\cos^2{\xi}}\leq \frac{\sin^2{(\frac{4\epsilon}{R}e^{-\frac{R}{4}})}}{1-(1-e^{-\frac{R}{2}}+O(e^{-R}))\cos^2{(\frac{4\epsilon}{R}e^{-\frac{R}{4}})}}\leq \frac{32\epsilon^2}{R^2}.
\end{equation}}

So $\psi\leq \frac{10\epsilon}{R}$.

\end{proof}

Now we are ready to prove Proposition \ref{phi}.
\begin{proof}
Let $x=x_0,x_1,\cdots,x_k=x'$ be the consecutive intersection points of $\overline{xx'}$ with $Y=\widetilde{\mathcal{C}}\cup \widetilde{\mathcal{A}}$, and $y_i=h(x_i)$. Let $C_i$ be the geodesic (segment) in $\partial \widetilde{g}(\widetilde{\mathcal{C}})$ or $\partial \widetilde{g}(\widetilde{\mathcal{A}})$ containing $y_i$.

Let $P_0$ be the hyperbolic plane containing $C_0=h(\gamma)$ and $h(\alpha)$, $P_0'$ be the hyperbolic plane containing $C_0$ and $y_1$. Let $P_i$ be the hyperbolic plane containing $C_i$ and $y_{i-1}$, and $P_i'$ be the hyperbolic plane containing $C_i$ and $y_{i+1}$ for $i=1,\cdots,k-1$. When considering about $\rho_0$, all the corresponding hyperbolic planes coincide with each other. So for each hyperbolic plane $P_i$ and $P_i'$, we can give it an orientation such that the corresponding orientations coincide with each other, when considering $\rho_0$.

Since $d(x,x_{k-1})\leq l(p)\leq \frac{1}{10000p^2}$, by Lemma 2.3 of \cite{KM1}, $k\leq(2+R)e^5 l(p)\leq 500Rl(p)$. The possible position of $y_{i-1},y_i$ and geodesics $C_{i-1}, C_i$ looks like a) or b) in Figure 2 for $i=1,\cdots,k-1$. By the choice of $x'$, the possible position of $y_{k-1},y_k$ and $C_{k-1},C_k$ looks like a), b) or c) in Figure 2.

\begin{center}
\psfrag{a}[]{$y_{i-1}$} \psfrag{b}[]{$y_i$} \psfrag{c}[]{$y_{k-1}$} \psfrag{d}[]{$y_k$}
\psfrag{e}[]{a)} \psfrag{f}[]{b)} \psfrag{g}[]{c)} \psfrag{h}[]{$b$} \psfrag{i}[]{$a$}
\psfrag{j}[]{$c$} \psfrag{k}[]{$d$}\psfrag{l}[]{$e$}\psfrag{m}[]{$f$}
\includegraphics[width=5in]{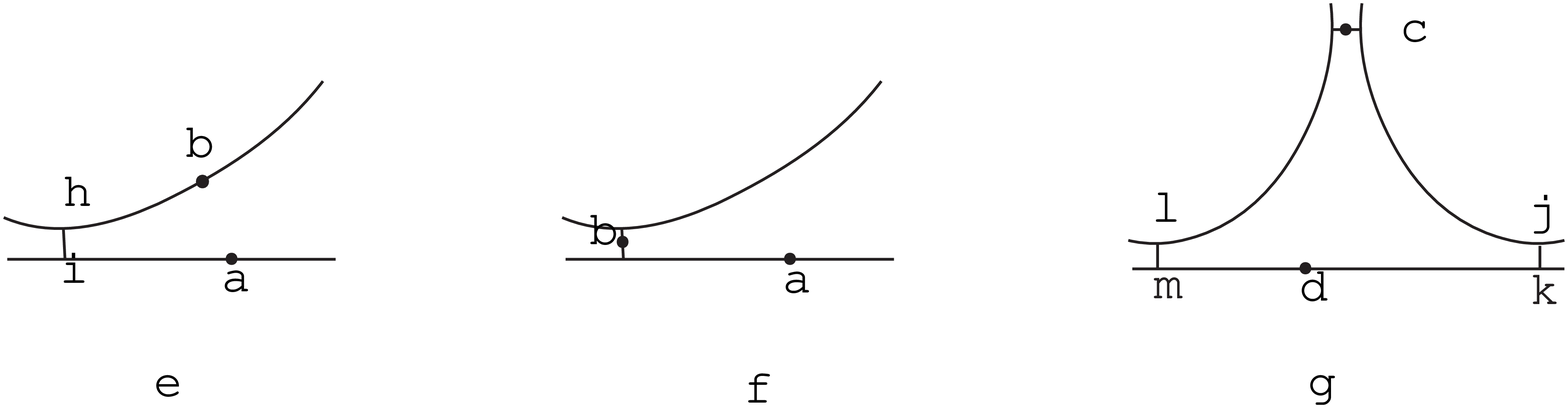}
\vskip 0.5 truecm
 \centerline{Figure 2}
\end{center}

Now we study two possible positions of $y_{i-1},y_i$ and $C_{i-1},C_i$.

i) For three points $a,b,c\in\mathbb{H}^3$ not lying on a geodesic, let $P_{abc}$ be the hyperplane in $\mathbb{H}^3$ containing these three points. Then in Figure 2 a), since $d(y_{i-1},a),d(y_i,b)<R$, Lemma \ref{twoplanes} implies:
\begin{equation}
\angle{(P_{ay_{i-1}y_i},P_{aby_{i-1}})}\leq \angle{(P_{ay_{i-1}y_i},P_{aby_i})}+\angle{(P_{aby_i},P_{aby_{i-1}})}\leq \frac{10\epsilon}{R}+\frac{10\epsilon}{R}=\frac{20\epsilon}{R},
\end{equation}
and
\begin{equation}
\angle{(P_{i-1}',P_i)}=\angle{(P_{ay_{i-1}y_i},P_{by_{i-1}y_i})}\leq \angle{(P_{ay_{i-1}y_i},P_{aby_{i-1}})}+\angle{(P_{aby_{i-1}},P_{by_{i-1}y_i})}\leq \frac{30\epsilon}{R}.
\end{equation}

ii) In Figure 2 c), let $\beta$ be the geodesic arc in $\partial \widetilde{g}(\widetilde{\mathcal{A}})$ containing $y_{k-1}$ and we use $\beta'$ to denote the bi-infinite geodesic containing $\beta$. Let $P_{\beta',c}$ be the hyperbolic plane containing $\beta'$ and $c$, and $l=d(d,P_{\beta',c})$, then
\begin{equation}
\sinh{l}\leq \sin{\frac{\epsilon}{R}}\sinh{(3e^{-\frac{R}{4}})}\leq \frac{4\epsilon}{R}e^{-\frac{R}{4}}.
\end{equation}
So $l\leq \frac{4\epsilon}{R}e^{-\frac{R}{4}}$. Since $d(d,\beta')\geq \frac{R}{2}-1$,
\begin{equation}
\sin{\angle{(P_{\beta',c},P_{\beta',d})}}\leq \frac{\sinh{(\frac{4\epsilon}{R}e^{-\frac{R}{4}})}}{\sinh{(\frac{R}{2}-1)}}\leq \frac{50\epsilon}{R}e^{-\frac{3R}{4}}.
\end{equation}
So $\angle{(P_{\beta',c},P_{\beta',d})}\leq \frac{100\epsilon}{R}e^{-\frac{3R}{4}}$. The same argument shows that $\angle{(P_{\beta',e},P_{\beta',f})}\leq \frac{100\epsilon}{R}e^{-\frac{3R}{4}}$. Since $\angle{(P_{\beta',c},P_{\beta',e})}\leq \frac{4\epsilon}{R}e^{-\frac{R}{4}}$, and by monotonicity, we have $\angle{(P_{\beta',c},P_{\beta',y_k})}\leq \frac{\epsilon}{R}$.

Then by a routine case-by-case argument, and using results in i) and ii), we get that:
\begin{equation}\label{27}
\angle{(P_{i-1}',P_i)}\leq \frac{100\epsilon}{R}
\end{equation}
for $i=1,\cdots,k-1$, and
\begin{equation}\label{28}
\angle{(P_i,P_i')}\leq \frac{100\epsilon}{R}
\end{equation}
for $i=0,\cdots,k-1$.

Now let $l_i=d(y_i,y_{i+1})$. Then by \eqref{lp},
\begin{equation}
\sum_{i=0}^{k-2} l_i \leq 2l(p)
\end{equation}
and
\begin{equation}
\frac{l(p)}{2} \leq \sum_{i=0}^{k-1} l_i \leq R+1.
\end{equation}
Let $\vec{v}_i \in T^1_{y_i}(\mathbb{H}^3)$ be the unit normal vector of $P_i$ at $y_i$ and $\vec{v}_i' \in T^1_{y_i}(\mathbb{H}^3)$ be the unit normal vector of $P_i'$ at $y_i$. Let $z_i$ be the orthogonal projection of $y_i$ to $P_0$ and $\vec{n}_i$ be the unit normal vector of $P_0$ at $z_i$. For $x,y\in \mathbb{H}^3$ and $\vec{v}\in T_x(\mathbb{H}^3)$, we will use $\vec{v}@y$ to denote the parallel transportation of $\vec{v}$ to $y$ along the geodesic arc $\overline{xy}$, as in \cite{KM1}.

{\bf Claim 1}: For $i=0,\cdots,k-1$, the following inequalities hold:
\begin{equation}
\angle{(\vec{v}_i@y_0,n_0)}\leq \frac{200i\epsilon}{R}+\sum_{j=0}^{i-1}l_j,
\end{equation}
\begin{equation}
\angle{(\vec{v}_i'@y_0,n_0)}\leq \frac{(200i+100)\epsilon}{R}+\sum_{j=0}^{i-1}l_j.
\end{equation}

We will prove Claim 1 by induction. The statement holds for $i=0$ since $\angle{(P_0,P_0')}\leq \frac{100\epsilon}{R}$. Suppose the statement holds for $i=m$, then for $i=m+1$,
\begin{equation}
\begin{split}
& \angle{(\vec{v}_{m+1}@y_0,\vec{n}_0)}\leq \angle{(\vec{v}_{m+1},\vec{v}_m'@y_{m+1})}+\angle{(\vec{v}_m'@y_{m+1}@y_0,\vec{n}_0)}\\
\leq & \angle{(P_{m+1},P_m')}+\angle{(\vec{v}_m'@y_0,\vec{n}_0)}+\angle{(\vec{v}_m'@y_{m+1}@y_0,\vec{v}_m'@y_0)}.
\end{split}
\end{equation}

The first term is less than $\frac{100\epsilon}{R}$ by \eqref{27}, the second term is less than $\frac{(200m+100)\epsilon}{R}+\sum_{j=0}^{m-1}l_j$ by induction hypothesis, and the third term is less than $l_m$ by Proposition 4.1 of \cite{KM1}. So
\begin{equation}
\angle{(\vec{v}_{m+1}@y_0,\vec{n}_0)}\leq \frac{200(m+1)\epsilon}{R}+\sum_{j=0}^ml_j.
\end{equation}

Since $\angle{(\vec{v}_{m+1},\vec{v}_{m+1}')}\leq \frac{100\epsilon}{R}$, the second inequality holds for $i=m+1$ and the proof of Claim 1 is done.

Now we estimate $\angle{(\vec{v}_m'@z_m,\vec{n}_m)}$ for $m=0,\cdots,k-1$. Since $k\leq 500Rl(p)$, we have:
\begin{equation}
\begin{split}
&\angle{(\vec{v}_m'@z_m,\vec{n}_m)}\leq \angle{(\vec{v}_m'@z_m,\vec{v}_m'@z_0@z_m)}+\angle{(\vec{v}_m'@z_0@z_m,\vec{n}_m)}\\
\leq & d(y_0,y_m)+\angle{(\vec{v}_m'@y_0,\vec{n}_0)} \leq 2l(p)+\frac{(200m+100)\epsilon}{R}+\sum_{j=0}^{m-1}l_j\\
\leq & 4l(p)+2\cdot 10^5\epsilon l(p)\leq 10 l(p).
\end{split}
\end{equation}

{\bf Claim 2}: For $d_i=d(y_i,z_i)=d(y_i,P_0),\ i=0,\cdots,k-1$, we have:
\begin{equation}
d(y_i,z_i)\leq \frac{1}{100p}\sum_{j=0}^{i-1}l_j.
\end{equation}

We will also prove Claim 2 by induction. The inequality holds trivially for $i=0$. Suppose the inequality holds for $i=m$. For $i=m+1$, Since $\vec{v}_m'\perp\overline{y_my_{m+1}}$ and $\angle{(\vec{v}_m'@z_m,\vec{n}_m)}\leq 10 l(p)$, an elementary computation gives:
\begin{equation}
\sinh{d(y_{m+1},z_{m+1})}=\sinh{d(y_m,z_m)}\cosh{l_m}+\cosh{d(y_m,z_m)}\sinh{l_m}\sin{\theta_m}
\end{equation}
for some $\theta_m$ with $|\theta_m|\leq \angle{(\vec{v}_m'@z_m,\vec{n}_m)}\leq 10 l(p)$ and $l_m\leq 2l(p)$.

Since $d(y_m,z_m)$, $l_m\leq 2l(p)$, we have:
\begin{equation}\label{38}
\begin{split}
\sinh{d(y_{m+1},z_{m+1})}& \leq \sinh{d(y_m,z_m)}(1+l_m^2)+(1+4l(p^2))\cdot2l_m\cdot 10(p)\\
& \leq \sinh{d(y_m,z_m)}+2d(y_m,z_m)l_m^2+(1+4l(p^2))\cdot2l_m\cdot 10l(p)\\
& =\sinh{d(y_m,z_m)}+l_m\left(2d(y_m,z_m)l_m+20l(p)(1+4l(p^2)\right)\\
& \leq \sinh{d(y_m,z_m)}+100l(p)\cdot l_m.
\end{split}
\end{equation}
So $d(y_{m+1},z_{m+1})\leq d(y_m,z_m)+100l(p)\cdot l_m\leq d(y_m,z_m)+\frac{1}{100p}l_m$, and the proof of Claim 2 is done.

The final computation is to estimate $\angle{z_ky_0y_k}$.

Since $h$ is a $(1+\frac{K\epsilon}{R},K(\epsilon+\frac{1}{R})^{\frac{1}{5}})$-quasi-isometry, and $\frac{l(p)}{2}\leq \sum_{i=0}^{k-1}l_i\leq R$, $d(y_0,y_k)\geq \max{(\frac{1}{2}\sum_{i=0}^{k-1}l_i,l_{k-1}-1)}$. Moreover, $d(y_k,z_k)$ is given by:
\begin{equation}
\sinh{d(y_k,z_k)} =\sinh{d(y_{k-1},z_{k-1})}\cosh{l_{k-1}}+\cosh{d(y_{k-1},z_{k-1})}\sinh{l_{k-1}}\sin{\theta_{k-1}}
\end{equation}
for some $\theta_{k-1}$ with $|\theta_{k-1}|\leq 10 l(p)$

If $l_{k-1} \leq 2$, (\ref{38}) still works with $m$ replaced by $k-1$, so $d(y_k,z_k)\leq \frac{1}{100p}\sum_{i=0}^{k-1}l_i$. Then
\begin{equation}
\sin{\angle{z_ky_0y_k}}=\frac{\sinh{d(y_k,z_k)}}{\sinh{d(y_k,y_0)}}\leq \frac{\sinh{\frac{\sum_{i=0}^{k-1}l_i}{100p}}}{\sinh{\frac{\sum_{i=0}^{k-1}l_i}{2}}}\leq \frac{1}{50p}.
\end{equation}

If $l_{k-1} \geq 2$, we have
\begin{equation}
\sinh{d(y_k,z_k)} \leq 4l(p)\cosh{l_{k-1}}+20l(p)\sinh{l_{k-1}}\leq 20l(p)e^{l_{k-1}}.
\end{equation} Then
\begin{equation}
\sin{\angle{z_ky_0y_k}}=\frac{\sinh{d(y_k,z_k)}}{\sinh{d(y_k,y_0)}}\leq \frac{20l(p)e^{l_{k-1}}}{\sinh{(l_{k-1}-1)}}\leq \frac{20l(p)\cdot e^2}{\sinh{1}}\leq \frac{1}{50p}.
\end{equation}

So in both of these cases, $|\psi'-\frac{\pi}{2}|=\angle{z_ky_0y_k}\leq \frac{1}{4p}$.
\end{proof}

\subsection{Proof of Theorem \ref{injectivity1}}

Given Theorem \ref{angle}, we are ready to prove Theorem \ref{injectivity1}.

\begin{proof}
Given the estimations in Theorem \ref{angle}, the proof here is similar to the proof of Lemma \ref{qi}, so we will only point out the necessary modifications.

In Theorem \ref{injectivity1}, if conditions 2), 3) and 4) are replaced by $\bold{hl}(C)=\frac{R}{2}$, $s(C)=1$ and $\bold{l}(q(C))=\frac{R+2k\pi i}{d_C}$ respectively, then we denote the corresponding representation by $\rho_0$. Since $l(G(X))>Re^{\frac{R}{4}}$, the same argument as in Lemma \ref{qi} shows that Theorem \ref{injectivity1} is true for $\rho_0$.

Let $q: \widetilde{X}\rightarrow X$ be the universal cover. When considering $\rho_0$, since $l(G(X))>Re^{\frac{R}{4}}$, there is a $\pi_1(X)$-equivariant embedding $\widetilde{X}\rightarrow \mathbb{H}^3$ with respect to $\rho_0$, and the image is a locally finite union of subsets of hyperbolic planes. There are two metrics on $\widetilde{X}$, one is the induced metric $d_0$ from $\mathbb{H}^3$ and the other one is the path metric $d$ induced by $d_0$. The proof of Lemma \ref{qi} implies that these two metrics are quasi-isometric, and we will always endow $\widetilde{X}$ with the metric $d$. $d$ is a geodesic metric on $X$ which is locally the hyperbolic metric away from singular curves.

Let $\widetilde{\mathcal{C}}\subset \widetilde{X}\subset \mathbb{H}^3$ be the preimage of the pants decomposition $\mathcal{C}_1\cup \mathcal{C}_2$ of $X$, which are union of geodesics. For each pants $\Pi$ in $X$, there are three seams which are the common perpendiculars of the three pair of cuffs of $\Pi$. Let $\mathcal{A}\subset X$ be the union of all such geodesic arcs and let $\widetilde{\mathcal{A}}\subset \widetilde{X}\subset \mathbb{H}^3$ be the preimage of $\mathcal{A}$ in $\mathbb{H}^3$. We define $Z=\widetilde{\mathcal{C}}\cup \widetilde{\mathcal{A}}$, then the embedding of $Z$ into $\mathbb{H}^3$ is $\pi_1(X)$-equivariant with respect to $\rho_0$.

Now we turn to study a general representation $\rho:\pi_1(X)\rightarrow PSL_2(\mathbb{C})$, which is a deformation of $\rho_0$. For each geodesic $C \subset \widetilde{\mathcal{C}}$, let $C'$ be the corresponding geodesic in $\mathbb{H}^3$ with respect to $\rho$, and let $\widetilde{\mathcal{C}'}$ be the union of such $C'$ for all $C \subset \widetilde{\mathcal{C}}$. Let $\widetilde{\mathcal{A}'}$ be the union of common perpendiculars of geodesics in $\widetilde{\mathcal{C}'}$, which correspond with geodesic arcs in $\widetilde{\mathcal{A}}$. Let $Z'=\widetilde{\mathcal{C}'}\cup \widetilde{\mathcal{A}'}$, then $Z'$ is $\rho(\pi_1(X))$-equivariant with respect to $\rho$.

There is a natural piecewise linear map $h:Z\rightarrow Z'$ defined as in Section \ref{piecewise}, such that the restriction of $h$ on each geodesic arc in $\widetilde{\mathcal{A}}$ is linear, and the restriction of $h$ on each component of $\widetilde{\mathcal{C}}\setminus \widetilde{\mathcal{A}}$ is linear. To show the convex cocompact property of $\rho$, we need only to show that $h:(Z,d)\rightarrow (Z',d_{\mathbb{H}^3}|_{Z'}$ is a quasi-isometry.

For any $x,y\in Z$ with $d(x,y)\geq R$, let $\alpha$ be the shortest path in $\widetilde{X}$ connecting $x$ and $y$. If $\alpha\cup q^{-1}(\mathcal{C}_2)=\emptyset$, the Theorem \ref{quasi} implies the quasi-isometric property. So we assume $\alpha\cap q^{-1}(\mathcal{C}_2)$ is not empty, and the intersection points are $x_1,x_2,\cdots,x_k$. Since $l(G(X))>Re^{\frac{R}{4}}$, $d(x_i,x_{i+1})\geq \frac{R}{2}$ for $i=1,\cdots,k-1$. Let $x_0=x$ if $d(x,x_1)\geq \frac{R}{2}$, or let $x_0=x_1$. Similarly, define $x_{k+1}=y$ if $d(x_k,y)\geq \frac{R}{2}$, or let $x_{k+1}=x_k$. Now we consider about angles $\angle{h(x_{i-1})h(x_i)h(x_{i+1})}$.

Let $\gamma_i$ be the geodesic in $Z$ containing $x_i$, with a preferred orientation, and let $\theta$ be the angle between $\gamma_i$ and $\overline{x_{i-1}x_i}$. Since $\alpha$ is a shortest path in $\widetilde{X}$, the angle between $\gamma_i$ and $\overline{x_ix_{i+1}}$ equals $\pi-\theta$. Without lose of generality, we can suppose $\theta \leq \frac{\pi}{2}$.

Let $\theta_1$ be the angle between $h(\gamma_i)$ and $\overline{h(x_{i-1})h(x_i)}$, and $\theta_2$ be the angle between $h(\gamma_i)$ and $\overline{h(x_i)h(x_{i+1})}$. If $\theta\leq \frac{\pi}{2}-\frac{3}{2p}$, then by Theorem \ref{angle}, we have $\theta_1\leq \frac{\pi}{2}-\frac{1}{2p}$ and $\theta_2\geq \frac{\pi}{2}+\frac{1}{2p}$, so $\angle{h(x_{i-1})h(x_i)h(x_{i+1})}\geq \frac{1}{p}$.

If $\frac{\pi}{2}-\frac{3}{2p}\leq \theta \leq \frac{\pi}{2}$, then $\frac{\pi}{2}-\frac{5}{2p}\leq \theta_1 \leq \frac{\pi}{2}+\frac{1}{p}$ and $\frac{\pi}{2}-\frac{1}{p}\leq \theta_1 \leq \frac{\pi}{2}+\frac{5}{2p}$. Let $\alpha_i$ be the component of $\widetilde{\mathcal{A}}$ which is on the same component of $\widetilde{X}\setminus q^{-1}(\mathcal{C}_2)$ as $x_{i-1}$, intersecting with $\gamma_i$, and is the closest such arc from $x_i$. We also choose $\alpha_{i}'$ by the same way with $x_{i-1}$ replaced by $x_{i+1}$. Let $P_i$ be the hyperbolic plane containing $h(\gamma_i)$ and $h(\alpha_i)$, and $P_i'$ be the hyperbolic plane containing $h(\gamma_i)$ and $h(\alpha_i')$. Since pants adjacent to the same singular curve are $p$-separated, $\angle{(P_i,P_i')}\geq \frac{2\pi}{p}$.

Let $\vec{n}_i$ and $\vec{n}_i'$ be the normal vectors of $P_i$ and $P_i'$ at $h(x_i)$ respectively, then $\angle{(\vec{n}_i,\vec{n}_i')}\geq \frac{2\pi}{p}$. Theorem \ref{angle} implies that, $|\angle{(\overline{h(x_{i-1})h(x_i)},\vec{n}_i)}-\frac{\pi}{2}|\leq \frac{1}{p}$ and  $|\angle{(\overline{h(x_i)h(x_{i+1})},\vec{n}_i')}-\frac{\pi}{2}|\leq \frac{1}{p}$. An elementary computation implies that $$\angle{h(x_{i-1})h(x_i)h(x_{i+1})}\geq \cos{\frac{1}{p}}\sin{\frac{2\pi}{p}}-\frac{2}{p}\geq (2\cos{\frac{1}{2}}-1)\frac{2}{p}>0.$$

Then the remaining part of the proof is same with the proof of Lemma \ref{qi}.
\end{proof}

\begin{rem}
Given the cited theorems, our proof of Theorem \ref{injectivity1} is very elementary and quite geometric flavor, but a little bit tedious. It is possible to give alternative proofs by using geometric group theory or using the method in \cite{Sa}.
\end{rem}

\bibliographystyle{amsalpha}

\end{document}